\theoremstyle{plain}
\newtheorem{lemma}{Lemma}
\newtheorem{definition}{Definition}
\newtheorem{corollary}{Corollary}
\newtheorem{proposition}{Proposition}
\newtheorem{theorem}{Theorem}
\newtheorem{question}{Question}
\newtheorem{remark}{Remark}
\newtheoremstyle{derp}
{3pt}
{3pt}
{}
{}
{\upshape}
{:}
{.5em}
{}
\theoremstyle{derp}
\newcommand{\Z}{\mathbb{Z}}
\newcommand{\N}{\mathbb{N}}
\newcommand{\Grig}{\mathcal{G}}
\newcommand{\jump}{\mathcal{j}}
\newcommand{\tree}{\mathcal{t}}
\newcommand{\overgroup}{\mathcal{O}}
\newcommand{\Vee}{\mathcal{S}}
\newcommand{\Voro}{\mathcal{V}}
\newcommand{\Tree}{\mathcal{T}}
\newcommand{\act}{\blacktriangleright}
\newcommand{\str}{{\star}}
\title{SFT covers for actions of the first Grigorchuk group}
\author{Rostislav Grigorchuk \and Ville Salo}
\begin{document}
\maketitle

\begin{abstract}
We study symbolic dynamical representations of actions of the first Grigorchuk group $\Grig$, namely its action on the boundary of the infinite rooted binary tree, its representation in the topological full group of a minimal substitutive $\Z$-shift, and its representation as a minimal system of Schreier graphs. We show that the first system admits an SFT cover, and the latter two systems are conjugate to sofic subshifts on $\Grig$, but are not of finite type. 
\end{abstract}

\section{Introduction}

\subsection{Symbolic dynamics}

Classical  symbolic  systems  called    subshifts    are dynamical   systems  associated  with    actions  of   the infinite  cyclic  group $\Z$ (or  a semigroup  of  natural  numbers  $\N$). Such a  dynamical  system  
is  given  by  a  pair  $(\Z, \Omega)$  where $\Omega$  is a  closed subset  of  the    space $A^{\Z}$ supplied  with  the     product  topology,  where  $A$  is a  finite  set (called the alphabet)  of cardinality  $>1$, the  group  $\Z$  acts  on  $\Omega$  by  shifts  and $\Omega$    is topologically closed and  invariant  with  respect  to  this  action. The  word  ``subshift"  addresses  the  fact  that $(\Z, \Omega)$  is a subsystem  of  the  full shift $(\Z, A^{\Z})$.

Among such  systems  a  special role is played by  subshifts  of  finite  type  (SFT), i.e.\   subshifts  determined   by  a  finite  set  of  forbidden  words (called  also strings  or   patterns)  that  are not allowed  to  appear in two-sided sequences $\omega  \in  \Omega$. A  larger  class  of  subshifts are sofic systems, which  are their factors (meaning images under a shift-commuting continuous map).  They can also be  defined as subshifts  determined  by  a set  of  forbidden  words which  constitutes  a   regular  language over  $A$.  (Regular  languages   are   the   simplest  class  of  languages  in  the  classical  Chomsky  hierarchy  of  formal  languages \cite{ChSc59}.)

A  class orthogonal (in the classical one-dimensional setting) to  the sofic  systems  is that of minimal  subshifts,  i.e.\ subshifts such that  the  orbit  of  each  point $\omega \in \Omega$ is  dense  in  $\Omega$. The  so-called  primitive  substitutions over  alphabet  $A$ (for  instance  the  famous  Thue-Morse  substitution  $0\rightarrow 01,  1\rightarrow 10$ is  an  example   of a  primitive  substitution)  give   a rich  source of  examples,  as  well  as  the  so  called  Toeplitz sequences  determining    Toeplitz systems \cite{JaKe69}. One-dimensional sofic  systems  have  plenty  of  periodic  points  whose  orbits  are  finite  and  hence are not  dense  in  $\Omega$ (assuming $\Omega$ is infinite). The   set  $\Omega$ is usually assumed  to  be  infinite  and  without  isolated  points, and in this case $\Omega$ is homeomorphic  to  a  Cantor  set.  A comprehensive  account  on  SFT  and  sofic  systems    can  be  found  in \cite{LiMa21}  while  minimal  systems  are  nicely  described  in  \cite{Ku03} for  instance.

A long  time  ago it was  observed  that  one  can  build a theory of  subshifts  associated  with an arbitrary  group  (or  semigroup)  $G$.  Usually  it  is  assumed  that  $G$ is  infinite  and  countable  or  even finitely  generated. In  this  case the ``full''  space of  interest is $\Omega=A^G$  and  the  group  $G$  naturally  acts  by  shift  of  coordinates  given  by the left  (or  right)  multiplication.  The  system  $(G, A^G)$  is  a  full  shift    and  any  closed  $G$-invariant  subset   $\Omega \subset A^G$  gives   the  subshift  system  $(G,\Omega)$. See for example \cite{Sc90,CoPa06,CeCo10,CeCo23} and references therein.

For  such  systems  one  also  can  define  a  notion  of  SFT, by  declaring  that  some  finite  set  of  patterns    is  forbidden  from   appearing in  configurations  $\omega$  from $\Omega$.  One  can  also  define  sofic  $G$-systems   as  factors of $G$-SFT  (it  seems  there  is  no  suitable alternative  definition  of them using  the terminology  of  formal  languages over the  groups).  Factoring again means  that given  two  $G$-systems  $X$  and  $Y$  there  is a  continuous $G$-equivariant surjective map  $f:X\rightarrow Y$. If  $f$  is a  homeomorphism then the systems  are said  to  be  conjugate. It  is  customary  in  topological  dynamics  to study  group  actions  up  to the  equivalence  relation of  conjugacy  similarly  like  in  group  theory  groups  are  studied  up  to  isomorphism.

The   first  considerations  of  this  type  were  given  for the  free  abelian  groups $\Z^n,  n \geq  2$   and  led  to  numerous  results  including  Berger's  result \cite{Be66} on  existence  of  SFT  for  $\Z^2$  with an   aperiodic (i.e.\ free)  action, meaning the absence  of  fixed  points for  elements  different  from the  identity.  This  was  done  through  the  construction  of an aperiodic  Wang  type  tiling  of  the plane. On these groups, contrary to $\Z$, subshifts of finite type (and thus in particular sofic shifts) can be minimal \cite{Mo89}.

The question of which groups support an SFT where the shift action is free, also known as a strongly aperiodic SFT, is widely studied, but open. A result of Jeandal \cite{Je15} shows  that algorithmic properties (complexity of the word  problem or \emph{WP}) can prevent the existence of a strongly aperiodic SFT, and another result of Cohen \cite{Co17} shows that the number of ends provides a geometric restriction. There is also a long list of individual groups and classes of groups where strongly aperiodic SFTs do exist, in particular the group $\Grig = G_\omega =\langle a, b, c, d\rangle$ constructed in \cite{Gr80} (and studied in \cite{Gr84} and many other papers, including the present one) was shown to admit a strongly aperiodic SFT in \cite{Ba19}.

\subsection{Effectively closed and totally non-free actions}

For  groups  with  nice  algorithmic  properties there is  a sense  to  study effective actions  on  zero-dimensional   compact metric  spaces,  first  of  all  on  a  Cantor  set. Roughly  speaking, the  action  of a  group $G$  with  decidable  word problem on  $X\subset A^{\N}$  is  effective if $X$ is an  effectively closed subset and for  every $g \in G$  and  $x \in X$   the  map  $x \mapsto g(x)$  is  computable  (for a precise  definition  see  Section 2).

\begin{remark}
As a word of warning, we note that in the theory of group actions, ``effective'' is sometimes used to as a synonym for ``faithful'', but in the present article, it is rather a synonym for ``computable'' (or ``lightface''), and refers to effective descriptive set theory.
\end{remark}

Arguably, a more fine-grained problem than the construction of strongly aperiodic SFTs is then to understand which effective actions of a group admit SFT covers. Namely, the existence of a free SFT on the group $G$ is trivially equivalent to the existence of any dynamical $G$-system with a free action, which admits an SFT cover (because periods are preserved under factor maps). This it is in fact a common way to produce strongly aperiodic SFTs on groups: One first finds strongly aperiodic effective subshifts on $G$, and then proves that some (or all) such subshifts are sofic. For many groups, in particular most of the groups studied in \cite{Ba19,BaSaSa21} (including the group $\Grig$), no other technique is known. This technique of \emph{simulation} is discussed in more detail in the following section.

It is well-known that subshifts are characterized among dynamical systems on zero-dimensional compact metrizable spaces by the dynamical property of expansivity \cite{He69,Me19}, so restricting to expansive zero-dimensional systems, the question is simply: which (effective) subshifts are sofic? (Our interest is mainly in the study of recursively presented groups, on which all sofic shifts are effective \cite{AuBaSa17}.)

This question is not well-understood even on abelian groups. For example, let $X \subset \{0,1\}^{\Z^3}$ be the set of configurations $x \in \{0,1\}^{\Z^3}$ such that all finite connected components of $1$-symbols (in the Cayley graph under the standard generators) have odd cardinality. It is not known whether $X$ is sofic \cite{Ho,Ba17}.

A  wide  class  of  interesting actions is given by  groups  generated  by  (asynchronous) finite automata  (viewed  as sequential  machines)  as   described  for  instance  in \cite{GrNeSu00}. This  includes  Thompson's  groups, the group $\Grig$ studied in the present paper and  many more.
At the  moment  there  is  a splash of  interest  in  the  group  actions on  a Cantor  set  inspired  by  studies  around  groups  of  Thompson-Higman  type,  groups  of  branch  type, ample (or  topological  full  groups) etc.  Usually  such  actions  are effective  and the  Cantor  set  is  realized  as a  closed  subset of  the product  space  $A^G$  where   $G$  is  infinite group  or a semigroup, or    as a  boundary  of  a  regular rooted (or  unrooted)  tree  $T$. 




A new phenomenon that appears with such group actions is that (the space of) stabilizers of points is an interesting object, and a group can have an interesting action by conjugacy on its space of subgroups. This action is of course of special interest, as it is intrinsic to the group.

An interesting related notion is the following, due to Vershik \cite{Ve11}: Consider a group action on a standard atomless measure space $X$ by a countable group $G$. Suppose further that $\mu$ is a measure on $X$ which is invariant under the action of $G$ (or at least quasi-invariant, meaning $g\mu$ and $\mu$ have the same measure zero sets for all $g \in G$). Then we say the action is called \emph{totally non-free} if the stabilizers of almost all points are distinct. We call an abstract group action \emph{totally absolutely non-free} if the stabilizers of all points are distinct.

It is known that the natural action of the group $\Grig$ on the boundary of the binary rooted tree, which we call $(\Grig,\Tree)$, is totally non-free in both senses \cite{Gr11}. This action is uniquely ergodic, meaning there is a unique probability measure that is invariant under it. The action defined in \cite{Vo12} of $\Grig$ on is marked Schreier graphs, which we call $(\Grig, \Voro)$, is also totally absolutely non-free. This is somehow implicit in \cite{Vo12}, but we give a proof in Lemma~\ref{lem:TANF}. 
Another example is Thompson's $V$, whose natural (defining) action on Cantor space is totally absolutely non-free. It admits a quasi-invariant measure, but no invariant probability measure.

It is an interesting question which groups admit faithful totally non-free or absolutely non-free actions, and for groups that do, it is of interest to try to understand the dynamics of such actions. One concrete problem is to understand whether they can be SFTs or at least admit SFT covers. Note that this problem is in some sense of a ``complementary nature'' to the more studied problem of finding strongly aperiodic SFTs on groups -- here we want the group structure to be completely visible in the stabilizers, while in the latter we want to avoid stabilizers completely.



\subsection{Simulation}

We now recall a general approach for attacking problems related to soficity, called simulation. A good setting for it is that $G, H$ are groups, and $\phi : G \to H$ is an epimorphism. Then an $H$-systems can be pulled back to $G$-systems by the trivial formula $g \act x = \phi(g) \act x$.

Hochman showed in \cite{Ho09} that for any effective $\Z$-system (meaning one that can be described algorithmically, see Section~2), its pullback in $\Z^3$ admits an SFT cover (in particular, the pullback of an effective subshift is sofic). This  result  was  improved (in the expansive case)  by Aubrun and Sablik \cite{AuSa13}, and Durand, Romashchenko and Shen \cite{DuRoSh10} by  showing  that  the  two-dimensional  version  of  Hochman's  result  is  true:  every effective $\Z$-subshift is topologically conjugate to the $(\Z \times \{0\})$-subaction of a sofic $\Z^2$-subshift.

These  results inspired the  direction  of  studies  that  got  the  name \emph{simulation}, with the idea that we simulate actions in a quotient group by SFTs (up to a dynamical projection) in a covering group.  As part  of  this  terminology, a group  $G$ with  the  property  that  every effective  action of  $G$   is  a   factor  of  a  $G$-SFT  is  said  to  be  \emph{self-simulable  group}. 

The class of self-simulable groups was shown in \cite{BaSaSa21} to include plenty of  non-amenable groups:  direct  products $F_m \times F_n$  of  noncommutative  free  groups, the  linear  groups $\mathrm{SL}(n,\Z)$ for sufficiently large $n$,  Thompson's  group  $V$  and  many  more.  On  the  other  hand amenable  groups (in  particular $\Z^d$  and  $\Grig$)  are not self-simulable, so  for  them covering using   larger groups is necessary.  One basic reason for this is the  (topological)  entropy  theory  developed  for  countable  amenable  groups: under factorization  the  entropy  drops,  and  SFTs (and more generally subshifts) have  finite  entropy  while  any (recursively-presented) amenable  group  has an effective action  on a  compact  set  with  infinite  topological  entropy, see e.g.\ \cite{BaSaSa21}.


Since Thompson's group $V$ is self-simulable, its natural action on $\{0,1\}^\omega$ is sofic. For this, it suffices to show that the action is expansive and effective. The former is well-known, and the latter is immediate from the defining formula. This gives an example of a totally absolutely non-free sofic shift. However, this system does not preserve any non-trivial probability measure.

Since the group $\Grig$ is not self-simulable, the question of soficity of specific actions arises. 
It turns out that simulation tools apply well to this group. Namely $\Grig$ belongs  to  the  class  of  branch  groups \cite{Gr00,BaGrSu03}, and  hence  has large product groups as a finite-index subgroups. Our main tool in this work will be the following simulation theorem for product groups, due to Sebasti\'an Barbieri \cite{Ba19}, which he used in particular to show that $\Grig$ admits a strongly aperiodic subshift of finite type.

\begin{theorem}[\cite{Ba19}]
\label{thm:Barbieri}
Let $G, H, K$ be three finitely-generated infinite groups, and $\pi : G \times H \times K \to G$ the natural projection. Then the $\pi$-pullback of any expansive effective $G$-system admits an $G \times H \times K$-SFT cover.
\end{theorem}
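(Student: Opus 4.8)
The plan is to first reduce to the combinatorial setting. Since the given $G$-system is expansive and zero-dimensional, it is topologically conjugate to a subshift $X \subseteq A^G$ for a finite alphabet $A$, and effectiveness means that $X$ is effectively closed: there is a Turing machine $M$ enumerating a set of forbidden patterns $\{p_i\}_{i \in \N}$, with $p_i \in A^{F_i}$ for finite $F_i \subseteq G$, whose avoidance cuts out $X$. The pullback system is then conjugate to the subshift $\tilde X \subseteq A^{G \times H \times K}$ of configurations that are constant along every $H \times K$-fibre and whose $G$-projection lies in $X$, i.e.\ $\tilde x(g,h,k) = x(g)$ with $x \in X$. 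The goal is to construct an SFT $Y$ over $G \times H \times K$ together with a factor map $Y \to \tilde X$; the map will simply forget all layers except a distinguished $A$-valued \emph{data} layer.

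The heart of the construction is a computation layer, built by the Durand--Romashchenko--Shen fixed-point (self-simulating tileset) method adapted to groups. Because $H$ and $K$ are infinite and finitely generated and their generating sets commute inside $G \times H \times K$, the subgroup $H \times K$ furnishes two independent, mutually commuting ``directions'' transverse to the data direction $G$; this is the two-dimensional computational substrate that the fixed-point method requires, and it is why two auxiliary factors are used rather than one. On this substrate I would impose finitely many local matching rules forcing a hierarchical system of nested macro-tiles in which the computation zones grow without bound: crucially, infiniteness of $H$ and $K$ guarantees that arbitrarily large combinatorial rectangles $P \times Q$, with $P \subseteq H$ and $Q \subseteq K$ finite, consist of distinct group elements, so there is genuinely unbounded room to run $M$. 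Each macro-tile runs $M$ to produce forbidden patterns and, through a reading mechanism that queries the data layer over a window in $G$ whose size matches the macro-tile's level, rejects any configuration in which some $p_i$ occurs.

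It then remains to wire the data layer to the computation and to check the factor property. Constancy of the data along $H \times K$ is itself a local SFT condition (equality of the $A$-symbol across each $H$- and $K$-generator step), and this forces $\tilde x$ to descend to a well-defined $x \in A^G$. The local rules of the computation layer are chosen so that an admissible configuration of $Y$ exists over a given data layer if and only if \emph{every} forbidden-pattern check succeeds, i.e.\ if and only if $x \in X$; surjectivity of the projection onto the data layer then yields the factor map $Y \to \tilde X$, while admissibility of the computation over any point of $\tilde X$ shows that $Y$ is nonempty and maps onto all of $\tilde X$. Finally one verifies that all imposed constraints are local and finite in number, so $Y$ is indeed an SFT.

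The step I expect to be the main obstacle is the robust implementation of the self-simulating hierarchy over the \emph{uncontrolled} geometry of $H$ and $K$. Unlike $\Z^2$, these groups may have undecidable word problem and, more importantly for a local construction, their Cayley graphs contain loops: following a fixed word of generators in the $H$-direction may close up unexpectedly, so ``straight lines'' and the rectangular macro-tile grid cannot be pinned down globally by finite local rules in the naive way. The construction must therefore be made resilient to these identifications---laying out the macro-tile hierarchy and the computation-to-data reading heads using only the abstract hypothesis that $H$ and $K$ are infinite, and ensuring that the relations of $H$ and $K$ can never force a spurious ``accept'' admitting a configuration projecting outside $\tilde X$. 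Getting finitely many local rules to enforce a consistent hierarchy and correct computation despite this is the technical crux; the alphabet bookkeeping and the verification of the factor property are then comparatively routine.
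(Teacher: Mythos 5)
Your overall plan---represent the expansive effective system as a subshift $X \subseteq A^G$, pull it back to configurations constant along $H \times K$-fibres, and use $H \times K$ as a two-dimensional computational substrate running a Durand--Romashchenko--Shen-style simulation that checks the enumerated forbidden patterns against a data layer---is the right architecture and matches the spirit of Barbieri's argument. The data-layer bookkeeping (constancy along $H\times K$ as a local rule, transport of $G$-direction data into the computation zones) is indeed the routine part.

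However, there is a genuine gap exactly where you flag "the technical crux": you never say how to impose a coherent $\Z^2$-grid (and hence a macro-tile hierarchy) on $H \times K$ by finitely many local rules, and the observation that large rectangles $P \times Q$ consist of distinct elements does not by itself give you any locally-checkable grid structure. The actual proof resolves this with a specific external input you do not invoke: by a theorem of Seward, every infinite finitely generated group admits a translation-like action of $\Z$, which (for a suitable choice of generating set) lets one build a nonempty SFT on $H$ (and on $K$) whose configurations draw bi-infinite paths; the product of these two path-SFTs yields candidate grids on $H \times K$, and overlaying a strongly aperiodic $\Z^2$-SFT on each grid forces the paths not to close up, so the grids are genuinely $\Z^2$-shaped. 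Only then can the fixed-point/simulation machinery (and the consistency checks between adjacent $G$-cosets, e.g.\ by coding $x$ together with its generator-translates $s_1x,\ldots,s_kx$) be run. Without this step your construction has no substrate on which to lay out macro-tiles, and no finite set of local rules in your proposal rules out configurations where the would-be grid degenerates (e.g.\ collapses onto loops in the Cayley graph), which could admit spurious points projecting outside $\tilde X$. So the proposal is a correct outline of the easy half of the argument with the hard half explicitly left open.
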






\subsection{Results}

Now  we  are  ready  to  state  our  results, which deal with the group $\Grig$, in detail.


As mentioned, $\Grig$ acts in a natural way by  automorphisms of the binary rooted  tree $T_2$,  and  hence  on its  boundary  $\Tree = \partial T_2$  which can  be identified  with  $\{0,1\}^\omega$. The  action  on  the boundary is  by  homeomorphisms  (in  fact  by  isometries  for a suitable  ultrametric on $\Tree$).  

The group $\Grig$ also acts in a natural way on a certain  closed  subset $\Vee$  of  $\{a, B, C, D\}^\Z$ defined by Vorobets in \cite{Vo10} (with slightly different choice of symbols), which is the minimal  subshift  associated  with Lysenok's  substitution from \cite{Ly85}, and which we recall in Section~\ref{sec:Grigorchuk}. Specifically, Matte Bon showed in \cite{Bo15} that $\Grig$ embeds in the topological full group of the $\Z$-shift map $\sigma_\Vee$ of $\Vee$ (another proof is given in \cite{GrLeNa17}). We also give a self-contained derivation of the embedding into the topological full group in the present paper.

We prove the following theorem.

\begin{theorem}
\label{thm:Main}
The system $(\Grig, \Vee)$ from \cite{Bo15} is topologically conjugate to a proper sofic shift on the group $\Grig$.
\end{theorem}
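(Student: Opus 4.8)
The plan is to express the system $(\Grig, \Vee)$ as a $\Grig$-subshift and then invoke the simulation machinery of Theorem~\ref{thm:Barbieri}, using the branch structure of $\Grig$. The two things to establish are (i) that the system is conjugate to some $\Grig$-subshift (i.e.\ it is expansive and the coding is $\Grig$-equivariant), and (ii) that this subshift is sofic. For soficity I would exploit that $\Grig$ is a branch group, so it has a finite-index subgroup isomorphic to a product of copies of $\Grig$ itself; composed with Theorem~\ref{thm:Barbieri} this should let me realize an effective $\Grig$-action as a factor of a $\Grig$-SFT.

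\medskip
\noindent\textbf{Step 1 (expansivity and effectiveness).} First I would record that the action $(\Grig,\Vee)$ is expansive: the points of $\Vee$ are two-sided sequences over the finite alphabet $\{a,B,C,D\}$, so the $\Z$-shift already separates points, and one checks that the $\Grig$-action (via the embedding into the topological full group of $\sigma_\Vee$) is by homeomorphisms that are ``local'' enough that some finite window separates orbits. By the expansivity characterization \cite{He69,Me19}, an expansive zero-dimensional $\Grig$-system is conjugate to a $\Grig$-subshift, so there is a $\Grig$-equivariant topological conjugacy $\Vee \cong \Omega$ for some $\Omega \subseteq A^{\Grig}$. I would simultaneously verify that this action is \emph{effective} in the sense of Section~2: $\Grig$ has decidable word problem, the coding map is computable, and $\Vee$ (being the Lysenok substitution subshift) is an effectively closed set. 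These are exactly the hypotheses needed to feed into the simulation theorem.

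\medskip
\noindent\textbf{Step 2 (soficity via the branch structure and Theorem~\ref{thm:Barbieri}).} This is where the branch property enters. Because $\Grig$ is branch, it has a finite-index subgroup $\Grig \times \Grig$ (rigid stabilizer of level one, up to finite index), and more generally product subgroups on which Theorem~\ref{thm:Barbieri} applies with $G=H=K=\Grig$. The strategy is: present our effective expansive $\Grig$-system as (the pullback of) an effective $G$-system under a projection $\pi: G\times H \times K \to G$ after passing to a finite-index product subgroup, apply Theorem~\ref{thm:Barbieri} to obtain a $(G\times H\times K)$-SFT cover, and then transport the SFT cover back up to a $\Grig$-SFT cover using that SFT-ness and soficity are preserved under finite-index induction (an SFT on a finite-index subgroup induces to an SFT on the whole group, and factor maps are preserved). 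This yields that $(\Grig,\Omega)$ is a factor of a $\Grig$-SFT, hence sofic.

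\medskip
\noindent\textbf{Step 3 (properness).} Finally I would argue the shift is \emph{proper} sofic, i.e.\ sofic but not an SFT. The natural route is to find a ``long-range'' constraint that no finite set of forbidden patterns can enforce --- typically via the substitutive/hierarchical structure of $\Vee$, showing that locally admissible configurations can be glued in ways forbidden globally, or by an entropy/complexity obstruction. \textbf{The hard part will be} exactly this last step together with the bookkeeping of Step~2: making the identification of the effective $\Grig$-system with a product-group pullback precise enough that Theorem~\ref{thm:Barbieri} applies, and then producing a concrete obstruction to finite type. I expect the non-SFT argument to rely on the aperiodic, non-finitely-constrained nature of the Lysenok substitution --- intuitively the same hierarchical rigidity that makes $\Vee$ minimal also prevents its $\Grig$-coding from being cut out by finitely many patterns.
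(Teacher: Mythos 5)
Your high-level architecture matches the paper's (expansivity to get a subshift, the branch structure plus Theorem~\ref{thm:Barbieri} plus finite-index transport for soficity, then a separate non-SFT argument), but there are two genuine gaps where the proposal as written would not go through.

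The main one is in Step~2. The system $(\Grig_2, \Vee)$ (for $\Grig_2$ the level-two rigid stabilizer $G_{00}\times G_{01}\times G_{10}\times G_{11}$) is \emph{not} the pullback of any single-factor action: different parts of the space $\Vee$ are moved by different direct factors, so no one projection $\pi : G\times H\times K \to G$ describes the whole action. The paper's proof first decomposes $\Vee$ into four clopen pieces $\Vee_{\alpha\beta}$ (preimages under the cover $\phi:\Vee\to\Tree$ of the level-two cylinders), proves that each $G_{\alpha'\beta'}$ with $\alpha'\beta'\neq\alpha\beta$ fixes $\Vee_{\alpha\beta}$ \emph{pointwise} (Lemma~\ref{lem:PointwiseInV} --- this is not automatic, since a priori such elements could permute points within $\phi$-fibers, and ruling this out uses the reversal map and aperiodicity of $\Vee$), applies Theorem~\ref{thm:Barbieri} to each piece separately, and then reassembles via a union lemma for SFT covers. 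Without this decomposition the hypothesis of the simulation theorem is simply not met. A smaller issue is in Step~1: saying that ``the $\Z$-shift separates points'' and that $\Grig$ embeds in $\llbracket(\sigma_\Vee,\Vee)\rrbracket$ does not yield expansivity of the $\Grig$-action; the containment you need is the reverse one, namely that $\sigma_\Vee$ lies in the topological full group of the \emph{$\Grig$-action}, so that the expansive $\Z$-action can be locally simulated by group elements (this is Lemma~\ref{lem:TFGExpansive} in the paper).

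For Step~3 you correctly flag properness as hard but offer no argument. The paper's obstruction is via pseudo-orbit tracing: one builds, from the circular words $\wideparen{(w_n\alpha)^2}$, shift-periodic configurations lying in arbitrarily good SFT approximations of $\Vee$ on whose orbits $\Grig$ still acts well-definedly; these yield pseudo-orbits that cannot be traced because $\Vee$ is aperiodic and the action is expansive. The delicate point, which your ``hierarchical rigidity'' intuition does not address, is that $\Grig$ is not finitely presented, so one cannot argue that all relations become visible in a large enough window; the well-definedness of the action on these periodic orbits requires the ad hoc palindrome/bit-flip argument of Lemma~\ref{lem:IsNotSFT}.
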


Recall that a sofic shift is a subshift which is a factor of a subshift of finite type, and a sofic shift is proper if it is not conjugate to a subshift of finite type. The properness, i.e.\ non-SFTness, of $(\Grig, \Vee)$, is shown in Lemma~\ref{lem:IsNotSFT}, and soficity in Lemma~\ref{lem:IsSofic}.

Vorobets studied in \cite{Vo12} a ``more efficient'' cover for $(\Grig, \Tree)$, namely an almost $1$-to-$1$ cover which we will call $(\Grig,\Voro)$. This is the same system as $(\Grig, \Vee)$ except that it is considered up to mirror-symmetry of the integer line (his system is directly a system of marked Schreier graphs), and accordingly it is a $2$-to-$1$ factor of $(\Grig, \Vee)$ \cite{GrLeNa17}. We show in Section~\ref{sec:Vorobets} that the direct analog of Theorem~\ref{thm:Main} holds for the system $(\Grig, \Voro)$. Indeed we can deduce it abstractly from the above theorem.

\begin{theorem}
\label{thm:MainVoro}
The system $(\Grig, \Voro)$ from \cite{Vo12} is topologically conjugate to a proper sofic shift on the group $\Grig$.
\end{theorem}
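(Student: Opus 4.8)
The plan is to deduce Theorem~\ref{thm:MainVoro} abstractly from Theorem~\ref{thm:Main}, exploiting the relationship between the two systems described in the excerpt: $(\Grig,\Voro)$ is the quotient of $(\Grig,\Vee)$ by the mirror-symmetry of the integer line, and is a $2$-to-$1$ factor of $(\Grig,\Vee)$. Since Theorem~\ref{thm:Main} already gives us that $(\Grig,\Vee)$ is conjugate to a proper sofic shift, the task splits into two parts: showing $(\Grig,\Voro)$ is sofic, and showing it is \emph{not} conjugate to an SFT. Soficity should be the routine part. The class of sofic shifts is closed under taking factors (a factor of a factor of an SFT is again a factor of an SFT), so because $(\Grig,\Voro)$ is a factor of the sofic system $(\Grig,\Vee)$, it too is sofic. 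One must of course check that the mirror-symmetry quotient map is genuinely a continuous $\Grig$-equivariant surjection onto a subshift, so that ``factor'' is meant in the symbolic-dynamical sense; this is where I would invoke the concrete description from \cite{Vo12,GrLeNa17}.

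The real content is the non-SFT (properness) direction, and I expect this to be the main obstacle. Properness of $(\Grig,\Vee)$ is established in Lemma~\ref{lem:IsNotSFT}, presumably by exhibiting some violation of the local/finitary character forced by an SFT --- typically a sequence of ``almost-legal'' configurations that fail some global constraint detectable only at arbitrarily large scales. The difficulty is that non-SFTness does \emph{not} transfer automatically along factor maps the way soficity does: a factor of a non-SFT can easily be an SFT. So I cannot simply cite the non-SFTness of $(\Grig,\Vee)$. Instead I would argue directly at the level of $(\Grig,\Voro)$, most likely by adapting the very obstruction used in Lemma~\ref{lem:IsNotSFT} to the mirror-quotient. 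Concretely, the plan is to take the witnessing configurations (or patterns) that show $(\Grig,\Vee)$ is not SFT, push them through the $2$-to-$1$ mirror-quotient, and verify that the obstruction survives --- i.e.\ that the global constraint distinguishing $\Voro$ from any finite-type relaxation is invariant under, and not destroyed by, identifying a configuration with its mirror image.

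An alternative, and perhaps cleaner, route to properness is to use the $2$-to-$1$ covering structure more structurally. If $(\Grig,\Voro)$ were an SFT, one could try to lift its defining forbidden-pattern set through the factor map $(\Grig,\Vee)\to(\Grig,\Voro)$ to deduce something about $(\Grig,\Vee)$; since the map is a fixed-radius continuous $\Grig$-map with uniformly bounded (in fact exactly $2$) fibers, finite-type-ness interacts reasonably with it. One would need to check that the mirror-identification, being an involutive symmetry rather than a genuine loss of local information, does not allow $\Voro$ to be ``simpler'' than $\Vee$ in the SFT sense. I would try to phrase this as: the mirror involution is itself a continuous $\Grig$-commuting map, so $\Vee$ is recovered from $\Voro$ as a two-point extension by a locally-defined rule, whence an SFT structure on $\Voro$ would pull back to one on $\Vee$, contradicting Lemma~\ref{lem:IsNotSFT}. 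Making this pullback argument rigorous --- in particular verifying that the relevant maps have the requisite finite range and that the extension is indeed governed by a finite rule --- is the step I would scrutinize most carefully, as it is exactly where the asymmetry between soficity (which transfers freely) and finite-type-ness (which does not) must be navigated.
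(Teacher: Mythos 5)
Your overall strategy is the paper's: everything is deduced abstractly from the properties of $(\Grig,\Vee)$ via the $2$-to-$1$ mirror quotient, soficity passes forward along the factor map, and the whole weight falls on the properness direction, where you correctly diagnose that non-SFTness does not transfer along factors and that the covering structure must be exploited. Your second route for properness (``an SFT structure on $\Voro$ would pull back to one on $\Vee$, contradicting Lemma~\ref{lem:IsNotSFT}'') is exactly the paper's argument in spirit. The difference is in the mechanism, and it matters: you propose to realize the pullback combinatorially, by lifting forbidden-pattern sets through a map of ``finite range,'' and you flag this as the step you cannot yet make rigorous. The paper sidesteps that entirely by working with the dynamical characterization of SFTs among subshifts, namely the pseudo-orbit tracing property. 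The key lemma (Lemma~\ref{lem:NoPOTP}) states that if $\phi : X \to Y$ is an \emph{open, constant-to-$1$} factor map and $Y$ has the pseudo-orbit tracing property, then so does $X$; applied contrapositively with $X = \Vee$, $Y = \Voro$, this gives properness with no mention of local rules. The hypotheses ``open'' and ``constant-to-$1$'' are what replace your ``finite range'' worry: openness is automatic for a quotient by a finite group action, exact $2$-to-$1$-ness follows from freeness of the reversal involution, and these two hypotheses feed into two topological lemmas (fibers are uniformly $\epsilon'$-separated, Lemma~\ref{lem:Separated}, and $y \mapsto \phi^{-1}(y)$ is Hausdorff-continuous, Lemma~\ref{lem:Continuous}) that drive the tracing argument. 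One further point your proposal underplays: to call $(\Grig,\Voro)$ a \emph{sofic shift} (rather than merely an SFT-covered system) one must first show it is expansive, hence a subshift; the paper does this abstractly too, via the companion Lemma~\ref{lem:Expansive} that expansivity passes to open constant-to-$1$ factors, rather than by appealing to the concrete Schreier-graph description. So your plan is sound and essentially the intended one, but the missing ingredient you were looking for is the POTP transfer lemma for open finite-to-one factor maps, not a forbidden-pattern lift.
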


As $(\Grig, \Vee)$ (or $(\Grig,\Voro)$) covers $(\Grig, \Tree)$, we also get the following corollary:

\begin{corollary}
\label{cor:MainCorollary}
The system $(\Grig, \Tree)$ admits an SFT cover.
\end{corollary}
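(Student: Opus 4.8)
The plan is to obtain the SFT cover of $(\Grig,\Tree)$ by transitivity: first cover the intermediate system $(\Grig,\Vee)$ by an SFT, and then push this cover down along a factor map $(\Grig,\Vee)\to(\Grig,\Tree)$. The only abstract ingredient required is the elementary observation that the property of admitting an SFT cover is inherited by factors: if $\psi\colon S\to(\Grig,\Vee)$ is a factor map from a $\Grig$-SFT $S$ and $\phi\colon(\Grig,\Vee)\to(\Grig,\Tree)$ is any factor map, then $\phi\circ\psi\colon S\to(\Grig,\Tree)$ is again a continuous, $\Grig$-equivariant surjection out of an SFT, hence an SFT cover of $(\Grig,\Tree)$. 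I would stress here that I do \emph{not} claim $(\Grig,\Tree)$ is sofic: since the boundary action is by isometries it is equicontinuous, hence not expansive, so $(\Grig,\Tree)$ is not even a subshift. This is exactly why the statement is phrased in terms of SFT covers rather than soficity.

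The first ingredient is immediate from Theorem~\ref{thm:Main}. Since $(\Grig,\Vee)$ is topologically conjugate to a sofic shift on $\Grig$, and a sofic shift is by definition a factor of an SFT, composing the conjugacy with the defining SFT factor map yields a $\Grig$-SFT $S$ and a factor map $\psi\colon S\to(\Grig,\Vee)$. One could equally well start from $(\Grig,\Voro)$ and Theorem~\ref{thm:MainVoro}.

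The substantive step is to produce the factor map $\phi\colon(\Grig,\Vee)\to(\Grig,\Tree)$, making precise the sense in which $(\Grig,\Vee)$ \emph{covers} the boundary action. I would use the orbital interpretation of $\Vee$: a point of $\Vee$ codes, along the bi-infinite $\Z$-orbit of the shift, the labelled orbital Schreier graph of the boundary action of $\Grig$ together with a distinguished base vertex, and this base vertex singles out a boundary point $\xi\in\{0,1\}^\omega=\Tree$. The map $\phi$ sends a sequence to $\xi$, recovering deeper and deeper prefixes of $\xi$ from larger and larger windows of the sequence around the origin. I would then check that $\phi$ is well defined and continuous, and that it is $\Grig$-equivariant, the latter being precisely the statement that the topological full group action on $\Vee$ realises, at the base vertex, the action of $\Grig$ on $\Tree$. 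Surjectivity is automatic, since the image of $\phi$ is a nonempty closed $\Grig$-invariant subset of the minimal system $(\Grig,\Tree)$. Concretely this $\phi$ is the almost $1$-to-$1$ covering map of Vorobets; alternatively I could factor through $(\Grig,\Voro)$, composing the $2$-to-$1$ map $(\Grig,\Vee)\to(\Grig,\Voro)$ with Vorobets' almost $1$-to-$1$ cover $(\Grig,\Voro)\to(\Grig,\Tree)$.

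I expect the main obstacle to be the verification that $\phi$ is genuinely continuous and $\Grig$-equivariant: one must confirm that the combinatorial data recorded in a point of $\Vee$ really does pin down the corresponding boundary point in a continuous fashion, and that the two descriptions of the $\Grig$-action — via the topological full group of $\sigma_\Vee$ on one side and via tree automorphisms on the other — agree at the base vertex. Once $\phi$ is in hand, the corollary reduces to the one-line composition $\phi\circ\psi$.
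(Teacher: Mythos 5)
Your proposal is correct and matches the paper's argument: the corollary is obtained exactly by composing an SFT cover of $(\Grig,\Vee)$ (from Theorem~\ref{thm:Main}) with the factor map $\phi\colon\Vee\to\Tree$, which the paper has already constructed in Lemma~\ref{lem:VTCover} as an inverse limit of the finite-level conjugacies $\phi_n$ (equivalently, the Vorobets-style almost $1$-to-$1$ cover you describe). The only difference is presentational: the paper front-loads the construction and verification of $\phi$ into Lemma~\ref{lem:VTCover}, so the corollary itself is a one-line composition, whereas you fold that verification into the corollary's proof.
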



As discussed, our proofs are based on simulation theory. Thus, while our theorem is a nice demonstration of the power of this theory, the fact that the SFT covers come from a general construction does in practice imply that they are very complicated.

\begin{question}
Are there ``nice'' SFT covers for the systems $(\Grig, \Vee)$, $(\Grig,\Voro)$ and $(\Grig, \Tree)$? Are there ones with small fibers, e.g.\ finite-to-$1$ on a residual or full measure set?
\end{question}

We mention that in the case of the lamplighter group $\Z_2 \wr \Z$, while its natural action on $\Z_2^\Z$ is not an SFT, it has a 9-to-1 cocycle extension which is an SFT \cite[Proposition~5.14]{BaSa24}. In the case of Thompson's $V$, we do not even know whether the natural action itself is an SFT.

Theorem~\ref{thm:MainVoro} provides a natural example of a sofic shift on a finitely-generated group, which is uniquely ergodic with nonatomic measure, and where the shift-action is totally absolutely non-free (see Lemma~\ref{lem:TANF} for a proof of this property). 
We are not aware of other such examples.
It is an interesting question whether there are natural examples of totally non-free actions that are SFTs without any extension. (Of particular interest would be a uniquely ergodic action of an amenable group.)


\begin{question}
Is there a natural example of an SFT on a finitely-generated group $G$, which is totally (absolutely) non-free?
\end{question}

Finally, for completeness we mention that for very general reasons the system $(\Grig, \Tree)$ has the so-called pseudo-orbit tracing property, see Section~\ref{sec:TreePOTP}. Despite the fact that this property characterizes the subshifts of finite type among subshifts, as far as we can tell it cannot be used to prove Corollary~\ref{cor:MainCorollary}.
 
\begin{proposition}
\label{prop:TreePOTP}
The system $(\Grig, \Tree)$ is pseudo-orbit tracing.
\end{proposition}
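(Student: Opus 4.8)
The plan is to exploit the profinite structure of $(\Grig, \Tree)$. The space $\Tree = \partial T_2 = \{0,1\}^\omega$ is the inverse limit of the finite level sets $\{0,1\}^n$, and since $\Grig$ acts by tree automorphisms, each truncation map $\pi_n : \{0,1\}^\omega \to \{0,1\}^n$ is $\Grig$-equivariant for the induced finite action of $\Grig$ on $\{0,1\}^n$. I will use the standard formulation of the pseudo-orbit tracing property for a finitely generated group action: writing $S$ for a finite symmetric generating set of $\Grig$ and using the ultrametric normalized so that $d(\xi,\eta) \le 2^{-n} \iff \pi_n(\xi)=\pi_n(\eta)$, a map $\phi : \Grig \to \Tree$ is a $\delta$-pseudo-orbit if $d(\phi(sg),\, s \cdot \phi(g)) < \delta$ for all $s \in S$ and $g \in \Grig$, and the system is pseudo-orbit tracing if for every $\epsilon > 0$ there is $\delta > 0$ such that every $\delta$-pseudo-orbit is $\epsilon$-shadowed by a genuine orbit $g \mapsto g \cdot x$.

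Given $\epsilon > 0$, the first step is to pick $n$ with $2^{-n} \le \epsilon$ and set $\delta = 2^{-n}$. The second, and crucial, step is to observe that at this scale a pseudo-orbit becomes an \emph{exact} orbit after truncation: since $\pi_n$ is equivariant and $d(\phi(sg),\, s\cdot\phi(g)) < \delta$ forces $\pi_n(\phi(sg)) = \pi_n(s \cdot \phi(g)) = s \cdot \pi_n(\phi(g))$, the truncated map $\bar\phi := \pi_n \circ \phi : \Grig \to \{0,1\}^n$ satisfies $\bar\phi(sg) = s \cdot \bar\phi(g)$ on the nose for every generator. Here the ultrametric is doing the real work: errors below scale $2^{-n}$ are simply invisible to $\pi_n$, so there is no accumulation of small discrepancies. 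This is precisely what fails for, e.g., an irrational rotation of the circle, and is why equicontinuity alone does not suffice.

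The third step is a routine induction on word length: from $\bar\phi(sg) = s \cdot \bar\phi(g)$ one obtains $\bar\phi(wg) = w \cdot \bar\phi(g)$ for every group element $w$, and taking $g = e$ and setting $p := \bar\phi(e)$ yields $\bar\phi(w) = w \cdot p$ for all $w \in \Grig$. Thus $\bar\phi$ is the orbit of the single level-$n$ point $p$. Finally I choose any $x \in \{0,1\}^\omega$ with $\pi_n(x) = p$; equivariance gives $\pi_n(g \cdot x) = g \cdot p = \bar\phi(g) = \pi_n(\phi(g))$, so $g \cdot x$ and $\phi(g)$ agree on the first $n$ coordinates, whence $d(g \cdot x, \phi(g)) \le 2^{-n} \le \epsilon$. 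Hence $x$ $\epsilon$-shadows $\phi$, proving the property.

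I do not expect a genuine obstacle here; the content is entirely ``soft.'' The only points requiring care are fixing a convention for the pseudo-orbit of a non-abelian group action and carrying out the word-length induction with the correct left/right bookkeeping, together with matching the scaling convention of the ultrametric to the chosen $\delta$. The conceptual punchline --- which is also why, as remarked in the text, this cannot be leveraged to prove Corollary~\ref{cor:MainCorollary} --- is that the argument uses nothing about $\Grig$ beyond the facts that $\Tree$ is an inverse limit of finite sets carrying compatible finite $\Grig$-actions and that the metric is the associated ultrametric; any totally disconnected profinite action of this kind would work equally well.
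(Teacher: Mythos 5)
Your proof is correct and is essentially the paper's argument: the paper proves a general lemma that any equicontinuous action of a finitely generated group on a compact zero-dimensional space is pseudo-orbit tracing, by extracting a finite invariant clopen partition (a finite equivariant factor), observing that a sufficiently fine pseudo-orbit projects to an exact orbit there, and inducting over balls in the word metric. You instantiate exactly this mechanism with the level-$n$ cylinder partition of $\Tree$, which is the invariant partition the paper's lemma produces in this case.
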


\subsection{Structure of the paper}

The  paper  is  organized  as  follows. After giving some preliminaries in Section~\ref{sec:Preliminaries}, we introduce the standard actions of the group $\Grig$ in Section~\ref{sec:Grigorchuk}. This section mainly proves the well-known result that $(\Grig, \Vee)$ factors onto $(\Grig, \Tree)$ (with small fibers). We have attempted to write the section as a self-contained ``symbolic dynamical take'' on the proof. In Section~\ref{sec:BasicConstructions} we give two constructions on SFTs, namely that SFTs are in an appropriate sense closed under union and finite extensions.

In Section~\ref{sec:VSubshift} we show that $(\Grig, \Vee)$ is expansive, thus (topologically conjugate to) a subshift on the group $\Grig$. In Section~\ref{sec:VSofic} we apply the theorem of Barbieri to show that it is sofic. In Section~\ref{sec:VNotSFT} we explain why it is not a subshift of finite type. In Section~\ref{sec:Vorobets}, we show the same results for the system $(\Grig, \Voro)$. These results are obtained fully abstractly from good properties of the covering map. In Section~\ref{sec:TreePOTP} we explain why $(\Grig, \Tree)$ has the pseudo-orbit tracing property.

\subsection{Acknowledgements}

The first author is supported by the Humboldt Foundation, and expresses his gratitude to the University of Bielefeld. Also, he is supported by the Travel Support for Mathematicians grant MP-TSM-00002045 from the Simons Foundation.

\section{Preliminaries}
\label{sec:Preliminaries}

Here we specify terminology and notations used in this article.

An \emph{alphabet} is a finite discrete set $A$. We write $A^*$ for the set of all words over the alphabet, including the empty word. If $u, v$ are words, we write $u\cdot v$ or simply $uv$ for their concatenation, meaning product as elements of a free monoid. The length of a word $u \in A^*$ is the number of letters it is composed of, and is denoted $|u|$. The empty word is the unique word of length $0$.

We index a word starting from $0$, so $u = u_0u_1\cdots u_{|u|-1}$. Numbers $i$ thought of as positions in a word are called \emph{indices}. If $u \in A^*$ and $v \in A^\omega$ ($\omega$ being the first infinite ordinal), then we write $u \cdot v$ or just $uv$ for $x \in A^\omega$ where $x_i = u_i$ for $0 \leq i < |u|$ and $x_i = v_{i-|u|}$ otherwise. We say $u \in A^*$ is a \emph{subword} of $v \in A^*$ if $v = wuw'$ for some words $w, w' \in A^*$.

Intervals are by default discrete, $[i, j] = \{i, i+1, \ldots, j-1, j\}$. We have $\N = \{0,1,2,\ldots\}$.

Groups act from left. We use $G$ for a generic group, and always write the first Grigorchuk group as $\Grig$.
If $G$ is a discrete countable group, a \emph{$G$-system} is a compact metrizable zero-dimensional topological space $X$ where $G$ acts by homeomorphisms through some $\mathcal{a} : G \times X \to X$. Lowercase calligraphic letters are used for actions. We also denote the action by $\act_{\mathcal{a}}$ or just $\act$ when $\mathcal{a}$ is clear from context, i.e.\ $\mathcal{a}(g, x) = g \act_{\mathcal{a}} x = g \act x$. When $\act$ is clear from context (and especially for shift maps), we also use the shorter form $gx = g \act x$.

Two $G$-systems are \emph{(topologically) conjugate} if there is a homeomorphism between them that commutes with the actions, and the map is called a \emph{conjugacy}. Up to conjugacy, $G$-systems are just closed $G$-invariant subsets $X \subset (\{0,1\}^\N)^G$ with action given by the shift formula $g \act x_h = x_{hg}$. Factor maps (resp.\ embeddings) are $G$-commuting surjective (resp.\ injective) continuous maps from one $G$-set onto (resp.\ into) another. A preimage in a factor map is called an \emph{extension} or a \emph{cover}.

An \emph{effective} $G$-system is a $G$-system such that $X \subset \{0,1\}^\N$ is effectively closed, and the relation $R = \{(x, g \act x) \;|\; x \in \{0,1\}^\N\}$ is effectively closed in $(\{0,1\}^\N)^2$ for all $g \in G$. Recall that a subset of $\{0,1\}^\N$ is effectively closed if we can recursively enumerate a sequence of words such that the union of the corresponding cylinders is the complement of the subset; and for a relation $R$, effectively closed means that we can enumerate a sequence of pairs of words $(u_i, v_i) \in (\{0,1\}^*)^2$ such that $\bigcup_i [u_i] \times [v_i]$ gives the complement of $R$. The notion of an effective $G$-system is equivalent to saying that $X \subset \{0,1\}^\N$ is effectively closed and for every $g \in G$, the map $(x, n) \mapsto (g \act x)_n$ can be computed by a Turing machine (depending on $g$), with $x$ given as an oracle, and $n$ as input.

An \emph{effective $G$-subshift} is simply a subshift that happens to be effective as a $G$-system. For $G$ a finitely-generated group with decidable word problem, it is very transparent what this means. The group $G$ is in computable bijection with $\N$ (by representing $g \in G$ first as the lexicographically minimal word over a generating set, and then bijecting such words with $\N$), and the group operations are computable functions on $\N$. In particular, we can see configurations in $A^G$ as elements of $A^\N$, and the shift maps are easily seen to be computable. Finally, one may code the system to the binary alphabet $\{0,1\}$ by replacing letters by binary words of a fixed length.


A \emph{pattern} is a function $P : D \to A$ where $D \subset G$ is finite. We say a pattern $P$ \emph{appears in $x \in A^G$ at $g$} if $g \act x \in [P]$, where $[P] = \{y \in A^G \;|\; y|_D = P\}$ is the \emph{cylinder} defined by $P$. We simply say it appears if it appears at $g$ for some $g$. We say $P$ appears in a set of configurations $X \subset A^G$ if it appears in some $x \in X$.

A \emph{$G$-subshift} over alphabet $A$ is a topologically closed and $G$-invariant subset of $X \subset A^G$, where $G$ acts on $A^G$ by $gx_h = x_{hg}$. Equivalently, $X$ is defined by forbidden patterns, in the sense that for some patterns $P_1, P_2, \ldots$, we have $X = \{x \in A^G \;|\; \forall i: P_i \mbox{ does not appear in } x\}$. A \emph{$G$-SFT} is a $G$-subshift defined by finitely many forbidden patterns. A \emph{sofic $G$-shift} is a subshift that is a factor of an SFT. A \emph{proper sofic shift} is a sofic shift that is not conjugate to an SFT. Here, an \emph{effective subshift} is one that is topologically conjugate to an effective system. When $G$ is finitely-generated with decidable word problem, this is equivalent to the subshift being definable by a recursively enumerable list of forbidden patterns $P_1, P_2, \ldots$.

Up to topological conjugacy, being a subshift is equivalent to being \emph{expansive}, i.e.\ for some $\epsilon > 0$, $\forall x \neq y: \exists g \in G: d(gx, gy) > \epsilon$. It is well-known that the SFT property is characterized among subshifts by the so-called pseudo-orbit tracing property \cite{ChLe18}, which we recall next.

If $\delta > 0$ and $F \Subset G$ is finite, then a \emph{$(\delta, F)$-pseudo-orbit} for the system $(G, X)$ is a map $z : G \to X$ satisfying $d(sz(g), z(sg)) < \delta$ for all $g \in G, s \in F$. A point $x \in X$ \emph{$\epsilon$-traces} (or \emph{$\epsilon$-shadows}) a pseudo-orbit $z$ if $d(gx, z(g)) < \epsilon$ for all $g \in G$. We say $(G, X)$ has the \emph{pseudo-orbit tracing property} (a.k.a.\ the \emph{shadowing property}) if for all $\epsilon > 0$, there exists $\delta > 0$ and $F \Subset G$ such that every $(\delta, F)$-pseudo-orbit is $\epsilon$-traced by some $x \in X$.

The following is classical \cite{Wa77,ChLe18}.

\begin{lemma}
For any group $G$, a $G$-subshift has the pseudo-orbit tracing property if and only if it an SFT.
\end{lemma}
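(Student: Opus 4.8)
The plan is to prove both implications directly by passing between finite windows in the group and balls in the metric on $A^G$. Throughout, fix a compatible metric $d$ for which, for every $\epsilon>0$, there is a finite \emph{window} $W \Subset G$ with $e \in W$ such that $x|_W = y|_W$ implies $d(x,y)<\epsilon$, and conversely for every finite $W$ there is $\epsilon>0$ with $d(x,y)<\epsilon$ implying $x|_W=y|_W$; such a metric exists since the topology of $A^G$ is the product topology and $G$ is countable. I will also use the explicit modulus of continuity of the shift: since $(s\act x)_h = x_{hs}$, the configurations $s \act x$ and $s \act y$ agree on a set $V$ exactly when $x$ and $y$ agree on $Vs$.

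For the direction SFT $\Rightarrow$ POTP, first note we may assume all forbidden patterns share a single finite support $D$ (replace a pattern of smaller support by its finitely many extensions to $D$). Given $\epsilon>0$, choose a finite $W \ni e$ that is simultaneously an $\epsilon$-window in the above sense and satisfies $W \supseteq D$, and set $F := W$. Choose $\delta>0$ small enough that $d(u,v)<\delta$ forces $u_e = v_e$. Given any $(\delta,F)$-pseudo-orbit $z$, I would define the candidate tracing point by $x_g := z(g)_e$. For $h \in W = F$ the pseudo-orbit condition $d(h\act z(g), z(hg))<\delta$ yields $(h\act z(g))_e = z(hg)_e$, i.e.\ $z(g)_h = x_{hg} = (g\act x)_h$; hence $(g\act x)|_W = z(g)|_W$. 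This immediately gives $d(g\act x, z(g))<\epsilon$, so $x$ $\epsilon$-traces $z$, and since $D \subseteq W$ it also gives $(g\act x)|_D = z(g)|_D$, which is not a forbidden pattern because $z(g) \in X$; as $g$ is arbitrary, no forbidden pattern appears in $x$, so $x \in X$.

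For the converse POTP $\Rightarrow$ SFT, the goal is to find a finite $E$ with $X = X_E$, where $X_E$ is the SFT forbidding exactly those $E$-patterns that do not occur in $X$; since $X \subseteq X_E$ always holds, it suffices to show every $x \in X_E$ lies in $X$. I would fix $\epsilon>0$ small enough that $d<\epsilon$ forces agreement at $e$, apply POTP to obtain $\delta>0$ and $F \Subset G$, then choose a finite $E_0 \ni e$ whose agreement forces $d<\delta$, and finally set $E := E_0 \cup \bigcup_{s\in F} E_0 s$. Given $x \in X_E$, for each $g$ pick $z(g) \in X$ with $z(g)|_E = (g\act x)|_E$. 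The heart of the argument is checking that $z$ is a $(\delta,F)$-pseudo-orbit: for $s \in F$ and $h \in E_0$ one has $hs \in E$ and $h \in E$, so both $(s\act z(g))_h = z(g)_{hs} = x_{hsg}$ and $z(sg)_h = x_{hsg}$, whence $s\act z(g)$ and $z(sg)$ agree on $E_0$ and thus lie within $\delta$. Tracing $z$ by some $y \in X$ then gives $y_g = (g\act y)_e = z(g)_e = x_g$ for all $g$, so $x = y \in X$, and $X = X_E$ is an SFT.

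The routine parts are the existence and modulus properties of the metric and the reduction to a common support $D$. The step I expect to require the most care is the window bookkeeping in the converse: one must order the choices as $\epsilon \to (\delta,F) \to E_0 \to E$ so that $E$ is large enough (relative to both $E_0$ and $F$) to make the locally chosen points $z(g)$ overlap consistently, while $E_0$ remains large enough to certify $\delta$-closeness; verifying that the resulting $z$ is a genuine pseudo-orbit is the crux.
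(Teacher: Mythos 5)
Your proof is correct and complete in both directions: the window bookkeeping ($W \supseteq D$ for SFT $\Rightarrow$ POTP, and $E \supseteq E_0 \cup \bigcup_{s\in F} E_0 s$ for the converse) is exactly what is needed, and the verification that the locally chosen points $z(g)$ form a genuine $(\delta,F)$-pseudo-orbit, as well as the check that the traced point recovers $x$ coordinate-by-coordinate, both go through under the paper's shift convention $(g\act x)_h = x_{hg}$. The paper does not prove this lemma but cites it as classical (Walters; Chung--Lee), and your argument is essentially the standard one from those references, so there is nothing to correct.
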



Being sofic has no known simple characterization on most groups, the exceptions being that
\begin{itemize}
\item it is understood well on $\Z$ \cite{LiMa21} where sofic shifts are given by labelings of bi-infinite paths in a finite directed graph, and
\item on self-simulable groups \cite{BaSaSa21} with decidable word problem, a subshift $X \subset A^G$ is sofic if and only if the shift action of $G$ is effective (in the sense of being an effective $G$-subshift, as discussed above).
\end{itemize}

A subshift $X \subset A^G$ is \emph{minimal} if it is nonempty and any subshift $Y \subset X$ is either $X$ or empty. This is equivalent to the property that the orbit of every point is dense, i.e.\ every nonempty open set is visited by every orbit. Then in fact for all nonempty open sets $U$ (in particular for all nonempty cylinder sets) there exists a finite set $S \Subset G$ such that for all $x \in X$, we have $gx \in U$ for some $g \in S$.

Here we study which systems admit an SFT cover. Such systems are sometimes called ``finite type'' in the literature, but this terminology can be confusing, since a subshift which is of finite type need not be a subshift of finite type. It would also make sense to call these systems ``sofic'', but there is a possibility of confusion here as well, since ``sofic system'' often refers to a ``sofic subshift'' in the literature. Thus, if $X$ has an SFT cover, we will simply call it \emph{SFT covered}.

On $A^\Z$ we use the metric $d$ where for $x \neq y$ we define $d(x, y) = \inf \{2^{-n} \;|\; n \in \N, x_{[-n, n]} = y_{[-n, n]}\}$.

\begin{definition}
Let $(G, X)$ be a group acting on a compact topological space $X$. The \emph{topological full group} $\llbracket (G, X) \rrbracket$ consists of homeomorphisms $h$ such that there exists a partition of the space $X$ into clopen sets $A_1, \ldots, A_k$ and group elements $g_1, \ldots, g_k$ such that $h|_{A_i} = g_i|_{A_i}$ for all $i$.
\end{definition}

The data above can be summarized into a single continuous function $\gamma : X \to G$:
we consider our groups $G$ to be discrete, so the image, being compact, is finite.
Then $A_g = \gamma^{-1}(\{g\})$ forms the clopen partition as $g$ ranges over $G$, and
we define $h|_{A_g} = g|_{A_g}$. Such $\gamma$ is called a \emph{cocycle}.

We also use the notation of Vorobets \cite{Vo20} in the case of a $\Z$-subshift $(\sigma, X)$, where $\sigma$ now denotes the left shift $\sigma(x)_i = x_{i+1}$ (which corresponds to the the action of $-1 \in \Z$ with our general formula).
If $U$ is a clopen set such that $U \cap \sigma(U) = \emptyset$, we define a homeomorphism
$\delta_U : X \to X$ by
\[ \delta_U(x) = \left\{\begin{array}{ll}
\sigma(x) & \mbox{if } x \in U \\
\sigma^{-1}(x) & \mbox{if } x \in \sigma(U) \\
x & \mbox{otherwise.}
\end{array}\right. \]

If a $\Z$-subshift $(\sigma, X)$, $X \subset A^\Z$ for finite alphabet $A$, is clear from context, and $u \in A^*$, $i \in \Z$, then we also use a notation for positioned cylinders $[u]_i = \{x \in X \;|\; x_{[i,i+|u|-1]} = u\}$. Such a set is a cylinder in the previous more general sense, so it is clopen. The \emph{language} of a $\Z$-subshift $X$ is the set of words $w \in A^*$ such that $[w]_0$ is nonempty. A $\Z$-subshift $X$ is minimal if and only if for all words $u$ in the language of $X$, there exists $n$ such that all words of length $n$ in the language of $X$ contain $u$ as a subword.

\section{The group $\Grig$ and its actions}
\label{sec:Grigorchuk}

We now give a self-contained construction of the systems $(\Grig, \Tree), (\Grig, \Vee)$ and $(\Grig, \Voro)$. The first is the ``defining'' action on a tree boundary, the second is the topological full group action first defined by Matte Bon in \cite{Bo15}, and the third is Vorobets' action on Schreier graphs from \cite{Vo12} (which we present slightly differently). These sit in a chain of factoring relations (as explained in \cite{GrLeNa17})
\[
\tikz{
\node (A) at (0, 0) {$(\Grig,\Vee)$};
\node (B) at (2, 0) {$(\Grig,\Voro)$};
\node (C) at (6, 0) {$(\Grig,\Tree)$};
\draw (A) edge[->] node[above] {2:1\phantom{,}} (B);
\draw (B) edge[->] node[above] {3:1, almost 1:1} (C);
}
\]

\subsection{Action on (the boundary of) a tree}

Write $\Tree = \{0,1\}^\omega$. We interpret $\Tree$ as the boundary $\partial T_2$ of a binary rooted tree $T_2$, and with the product topology $\Tree$ is homeomorphic to Cantor space.

The group $\Grig$ is generated by homeomorphisms $a,b,c,d : \Tree \to \Tree$ defined as follows: The homeomorphism $a$ acts by $a \act \alpha x = (1 - \alpha) x$, where $\alpha \in \{0,1\}$, $x \in T$. The homeomorphism $g \in \{b, c, d\}$ acts on a dense set of points by
\[ g \act 1^n 0 \alpha x = \left\{\begin{array}{ll}
1^n 0 (1 -\alpha) x & \mbox{ if } n \not\equiv v_g \bmod 3, \\
1^n 0 \alpha x & \mbox{ otherwise.} \\
\end{array}\right. \]
where $v_b = 2, v_c = 1, v_d = 0$, and $g \act 1^\N = 1^\N$ for $g \in \{b,c,d\}$. This extends naturally and uniquely to a continuous action on $\Tree$.

\begin{definition}
We call this defining action the \emph{tree action}, and (when not clear from context) denote it by $\act_\tree$.
\end{definition}


The system $(\act_{\tree}, \Grig, \Tree)$ is of course a $\Grig$-system, i.e.\ a compact metrizable zero-dimensional space with a continuous action of $\Grig$.  It is useful to also define an action on $\{0,1\}^m$ by the analogous formula (i.e.\ the quotient action when we project to the first $m$ coordinates), and we use the same name for it. Note that elements of $\{0,1\}^m$ correspond to vertices of $T_2$ at height $m$, and the action is by tree automorphisms.

\subsection{Action on a $\Z$-subshift}

We next define another action which is more directly related to symbolic dynamics. Recall that the group $\Grig$ is not finitely-presented, but can be given a natural infinite (EDT0L \cite{CiElFe18}) presentation
\[ \mathcal \Grig  = \langle a,b,c,d  :   a^2, b^2, c^2, d^2, bcd,  \kappa^n((ad)^4), \kappa^n((adacac)^4),  n=0,1,2,\dots \rangle \]
where  $\kappa :a \rightarrow aca, b \rightarrow d, c\rightarrow b, d\rightarrow c$ is a substitution (= free monoid endomorphism), the application  of  iterates  of  which to  the  relators  $(ad)^4$ and  $(adacac)^4$   gives  two  infinite  sequences   of  relators  involved  in  the  presentation. This presentation was found  by  I.\ Lysenok in \cite{Ly85}, and it was shown in \cite{Gr99} to be minimal, i.e.\ none  of the relators  can  be  dropped  without  changing  the  group. The  substitution $\kappa$ generates from $a$ the same fixed point as the primitive substitution $\tau(a) = \tau(d) = ac, \tau(b) = ad, \tau(c) = ab$. This is a minimal $\Z$-subshift. 

We next represent $\Grig$ by bijections on certain ``starred words'', 
with the goal of proving that $\Grig$ embeds in the topological full group of the substitutive subshift $\Vee$ given by $\kappa$ (or $\tau$), first observed in \cite{Bo15}. 
(Our subshift uses upper case letters and a different permutation of $b, c, d$, than \cite{Bo15} does.)

\begin{remark}
We give the construction in detail, mostly to be self-contained, but also since we need some details in the proofs of Lemma~\ref{lem:IsNotSFT} and Lemma~\ref{lem:PointwiseInV}. Of these, only Lemma~\ref{lem:PointwiseInV} is needed for what we consider the main part of the proof, namely that $(\Grig, \Vee)$ is sofic.
\end{remark}

An $a$-Alternating Word, or \emph{AW} for short, is a word $w$ in $\{a,B,C,D\}^*$ (i.e.\ a finite-length, possibly empty, word over this cardinality-$4$ alphabet) such that in every subword $\alpha \beta$ of $w$ with $\alpha,\beta \in \{a,B,C,D\}$ exactly one of $\alpha, \beta$ is equal to $a$, i.e.\
\[ (\alpha = a \vee \beta = a) \wedge (\alpha \neq a \vee \beta \neq a). \]
Equivalently $w$ is an AW if every $a$ (non-$a$) symbol is followed by a non-$a$ (resp.\ $a$) symbol (unless we are at the end of the word), and vice versa. For example, $aBa$, $a$ and the empty word are AWs, while $aBBa$ is not.

A Starred $a$-Alternating Word or \emph{SAW} is a word $w$ in $\{a,B,C,D,\str\}^n$ for arbitrary $n$, where exactly one $\str$ appears, and after we remove it we obtain an AW. For example $\str, a\str B$ are SAWs, but $a \str a$ is not. Let $W$ be the set of all SAWs. \emph{Starrings} of an AW are the SAWs obtained by inserting the star-symbol anywhere in the AW, and the resulting index of the star is the \emph{star position}. For an AW of length $k$, the possible star positions are $[0, k]$.

The star $\str$ can be thought of as a movable ``origin'' (which later in the topological full group context will literally translate to an origin). Other common symbols used for such a purpose are the decimal dot and the symbol $|$. 

Geometrically, a word like $w = a D a \str C a D a$ may be pictured as
\begin{center}
\tikz{
\draw (0,0) -- (7,0);
\def\ee{0.1}
\foreach \i in {0,1,2,4,5,6,7} {
\draw (\i,-\ee) -- (\i,\ee);
}
\node () at (0.5, -0.35) {$a$};
\node () at (1.5, -0.35) {$D$};
\node () at (2.5, -0.35) {$a$};
\node () at (3.5, -0.35) {$C$};
\node () at (4.5, -0.35) {$a$};
\node () at (5.5, -0.35) {$D$};
\node () at (6.5, -0.35) {$a$};
\node () at (3, 0) {\large $\str$};
}
\end{center}
i.e.\ replacing letters as labeled intervals of length $1$, where the $\str$ is between two of the intervals (or at one of the ends). 

Consider the free product $H = \Z_2 * \Z_2^2$ where the $\Z_2$ generator is again $a$ and $b,c,d$ are the nontrivial elements of $\Z_2^2$. As per the definition of a free product, specifying an action of $H$ is the same as specifying an action of $\Z_2$, and an action of $\Z_2^2$, which can be entirely independent. Recall that every element $h \in H$ can be put in \emph{freely reduced form}, meaning in the form $a^x \alpha_1 a \alpha_2 \cdots \alpha_k a^y$ where $\alpha_i \in \{b, c, d\}$ and $x, y \in \{0,1\}$, by simply performing multiplications and reductions greedily. These freely reduced forms are in one to one correspondence to the AWs, but play a somewhat different role.

Then $H$ acts on the set of SAWs $W$ by what we call the \emph{jump action}, denoted $\act_\jump$, which we now define. The idea is that the generator $a$ will have the unique $\str$ jump over any $a$ adjacent to it if one is present in the sense that if the subword $a \str$ appears, it is rewritten into $\str a$ and vice versa. We write ``if present'' since of course it is possible that there is no $a$-symbol adjacent to $\str$. Similarly, $b$ has it jump over any adjacent $C, D$ if present (i.e.\ it similarly jumps over either of these symbols); $c$ jumps over $B,D$; $d$ jumps over $B, C$.

In formulas, the definition is as follows, where $\alpha, \beta, \in \{a, B, C, D\}$ and $u, v \in \{a, B, C, D\}^*$:
\begin{align*}
\alpha \act_\jump u \str \beta v &= u \beta \str v \mbox{\; if $\beta \in S_\alpha$}, \\
\alpha \act_\jump u \beta \str v &= u \str \beta v \mbox{\; if $\beta \in S_\alpha$ } \\ 
\alpha \act_\jump u \str v \;\,\, &= u \str v \mbox{\;\;\,\, if neither of the above formulas can be applied.}
\end{align*}
Here, $S_a = \{a\}, S_b = \{C, D\}, S_c = \{B, D\}, S_d = \{B, C\}$. Note that the first and second cases cannot overlap for SAWs. 

It is easy to verify that $a$ acts by an involution and $\langle b, c, d \rangle$ satisfy the identities of $\Z_2^2$, so this is indeed a well-defined action of $H$. Of course, this is not an action of $\Grig$. For example, $ab$ is (like all other elements) of finite order in $\Grig$, but $(ab)^n \act_\jump (aD)^n \str = \str (aD)^n$ for all $n$ in the action of $H$.

Define a list of AWs inductively by $w_1 = a$, and in general $w_{n+1} = w_n \alpha w_n$ where
\begin{equation}
\label{eq:AlphaChoice}
\alpha = \left\{ \begin{array}{ll}
B & \mbox{ if } n \equiv 0 \bmod 3 \\
D & \mbox{ if } n \equiv 1 \bmod 3 \\
C & \mbox{ if } n \equiv 2 \bmod 3 \\
\end{array}\right.
\end{equation}
Checking that these are AWs is an immediate induction. Note that $|w_n| = 2^n-1$, thus the possible star positions of starrings of $w_n$ are $[0, 2^n-1]$.

For explicitness, we enumerate a few initial words from this list:
\[ w_1 = a, w_2 = aDa, w_3 = aDaCaDa, w_4 = aDaCaDaBaDaCaDa, ... \]
We illustrate the $H$-action of $dadaba$ on a starring of $w_3$;
\begin{align*}
dadaba \act_\jump aDa\str CaDa &= dadab \act_\jump aD\str aCaDa \\
&= dada \act_\jump a\str DaCaDa \\
&= dad \act_\jump \str aDaCaDa \\
& = da \act_\jump \str aDaCaDa \\
& = d \act_\jump a\str DaCaDa \\
& = a \str DaCaDa
\end{align*}
Geometrically, the $\str$ jumps around as follows:
\begin{center}
\tikz{
\draw (0,0) -- (7,0);
\def\ee{0.1}
\foreach \i in {0,1,2,4,5,6,7} {
\draw (\i,-\ee) -- (\i,\ee);
}
\node () at (0.5, -0.5) {$a$};
\node () at (1.5, -0.5) {$D$};
\node () at (2.5, -0.5) {$a$};
\node () at (3.5, -0.5) {$C$};
\node () at (4.5, -0.5) {$a$};
\node () at (5.5, -0.5) {$D$};
\node () at (6.5, -0.5) {$a$};
\def\dd{0.3}
\node () at (3, 0) {\color{gray} \large $\str$};
\draw (3, 0) edge [bend right = 20, -stealth] node [above] {$a$} (2, \dd);
\draw (2, \dd) edge [bend right = 20, -stealth] node [above] {$b$} (1, \dd*2);
\draw (1, \dd*2) edge [bend right = 20, -stealth] node [above] {$a$} (0, \dd*3);
\draw (0, \dd*3) edge [bend left = 80, looseness = 4, -stealth] node [above left] {$d$} (0, \dd*4);
\draw (0, \dd*4) edge [bend left = 20, -stealth] node [above] {$a$} (1, \dd*5);
\draw (1, \dd*5) edge [bend right = 80, looseness = 4, -stealth] node [above right] {$d$} (1.05, \dd*6+0.02);
\node () at (1, \dd*6) {\large $\str$};
}
\end{center}

\begin{lemma}
\label{lem:ActionOnStarrings}
For each $n$, the group $\Grig$ admits a well-defined action on the set of starrings of $w_n$ by $\act_\jump$. Furthermore, for a fixed $n$, this action is conjugate to its tree action on $\{0,1\}^n$.
\end{lemma}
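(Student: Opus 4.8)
\emph{The plan.} The jump action is defined as an action of the free product $H = \Z_2 * \Z_2^2$ (with $a$ generating the $\Z_2$ factor and $b,c,d$ the involutions of $\Z_2^2$), and a single application of $\act_\jump$ only moves the star $\str$ without altering any letter; hence the set of starrings of $w_n$ is an $H$-invariant subset of $W$, acted on through a homomorphism $H \to \mathrm{Sym}(\{\text{starrings of }w_n\})$. There are exactly $2^n$ starrings, one per star position in $[0,2^n-1]$, matching $|\{0,1\}^n|$. My plan is to reduce \emph{both} assertions to the construction of a single bijection $\phi_n$ from the starrings of $w_n$ to $\{0,1\}^n$ that intertwines the two actions of each generator $a,b,c,d$. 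Given such a $\phi_n$, the homomorphism $H \to \mathrm{Sym}(\{0,1\}^n)$ obtained by transporting the jump action along $\phi_n$ agrees generator-by-generator with the tree action; since the tree action is a genuine $\Grig$-action it factors through the quotient $H \to \Grig$, and therefore so does the jump action on starrings. This simultaneously yields well-definedness of the $\Grig$-action (first claim) and the conjugacy $\phi_n$ with the tree action (second claim).

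\emph{Both actions are labelled paths.} Each action is generated by the involutions $a,b,c,d$, so I would encode it by its Schreier graph on the relevant $2^n$-element set: vertices are the points, and for each generator $g$ and point $x$ we draw an edge $\{x, g\act x\}$ (a loop when $g$ fixes $x$). Two such actions are conjugate exactly when their labelled Schreier graphs are isomorphic, and for a connected graph that is a simple path this reduces to matching the sequence of edge-labels read off along the path; the palindromicity of $w_n$ (immediate from $w_{n+1}=w_n\alpha w_n$) removes any orientation ambiguity. So the work splits into checking that both Schreier graphs are paths (i.e.\ both actions are transitive) and matching their label sequences.

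\emph{The star side.} Here everything is direct from the definition of $\act_\jump$. Writing $w_n=\ell_0\ell_1\cdots\ell_{2^n-2}$, the even-indexed letters are all $a$ and the odd-indexed ones lie in $\{B,C,D\}$. One checks that $a$ swaps star positions $2j\leftrightarrow 2j+1$, while $g\in\{b,c,d\}$ swaps $2j+1\leftrightarrow 2j+2$ precisely when $\ell_{2j+1}\in S_g$ and fixes both otherwise. As each of $B,C,D$ lies in exactly two of $S_b,S_c,S_d$, every consecutive pair of positions is joined, so the graph is the path $0-1-\cdots-(2^n-1)$, whose $\{b,c,d\}$-labels form the non-$a$ subword $\rho_n:=\ell_1\ell_3\ell_5\cdots$ under the correspondence $D\leftrightarrow\{b,c\}$, $C\leftrightarrow\{b,d\}$, $B\leftrightarrow\{c,d\}$. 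From $w_{n+1}=w_n\alpha_n w_n$ one gets $\rho_{n+1}=\rho_n\,\alpha_n\,\rho_n$, so the $k$-th letter of $\rho_n$ depends only on the $2$-adic valuation of $k$, equalling $D,C,B$ according as this valuation is $\equiv 0,1,2 \bmod 3$.

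\emph{The tree side and the main obstacle.} I would compute the Schreier graph $\Gamma_n$ of the tree action on $\{0,1\}^n$ by induction, using the self-similar form of the action equivalent to the defining formula, namely $a=\sigma$ (swap the two subtrees), $b=(a,c)$, $c=(a,d)$, $d=(1,b)$. An induction shows $\Gamma_n$ is again a path with $a$-edges alternating with $\{b,c,d\}$-pair-edges, and one extracts its label sequence and compares it with $\rho_n$. The one genuinely delicate point is the bookkeeping of the cyclic permutation $b\to c\to d\to b$ appearing in this recursion: passing from $\Gamma_n$ to $\Gamma_{n+1}$ relabels the generators in one half, reflecting the substitution $\kappa$ behind Lysenok's presentation. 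The heart of the proof is to verify that this cyclic relabeling is exactly compensated by the way $\alpha_n$ cycles through $B,D,C$ with $n \bmod 3$, so that the tree label sequence also assigns to its $k$-th letter the value determined by the $2$-adic valuation of $k$ modulo $3$, and hence coincides with $\rho_n$. I expect transitivity, palindromicity, the generator-by-generator checks, and the small base cases to be routine; threading this mod-$3$ phase correctly through the induction is the only real work.
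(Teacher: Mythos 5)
Your reduction of both claims to a single generator-intertwining bijection is exactly right and is also how the paper proceeds: once the jump action on starrings of $w_n$ is conjugated to the tree action on $\{0,1\}^n$, well-definedness of the $\Grig$-action is automatic because the tree action is a genuine $\Grig$-action. Your star-side analysis is correct and complete, and the closed form for $\rho_n$ (the $k$-th letter is $D,C,B$ according as $v_2(k)\equiv 0,1,2 \bmod 3$) is a clean consequence of $\rho_{n+1}=\rho_n\,\alpha_n\,\rho_n$. Where you genuinely diverge from the paper is the tree side. The paper never invokes the wreath recursion: it works with the projection of the level-$(n+1)$ action onto the first $n$ coordinates, defines $\phi_{n+1}$ from $\phi_n$ by the Gray-code rule $\phi_{n+1}(j)=\phi_n(j)1$ and $\phi_{n+1}(2^{n+1}-1-j)=\phi_n(j)0$, and checks intertwining by a three-case analysis (both positions in the left half, both in the right half, crossing the middle), so the entire mod-$3$ bookkeeping is confined to the single middle crossing, where the choice of $\alpha_n$ is matched directly against the condition $n-1\not\equiv v_g\bmod 3$ in the definition of the tree action. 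Your route via $a=\sigma$, $b=(a,c)$, $c=(a,d)$, $d=(1,b)$ (which is consistent with the paper's defining formula) instead spreads the mod-$3$ phase through a relabeling at every level; it buys a conceptual link to the substitution $\kappa$ behind Lysenok's presentation, at the cost of a more delicate induction.

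The one thing you should be explicit about is that the wreath-recursion induction does \emph{not} present the level-$(n+1)$ path as two copies of the level-$n$ path concatenated (which is what $w_{n+1}=w_n\alpha_n w_n$ would suggest). Restricting to the subtree $1\{0,1\}^n$ retains only the non-$a$ edges of the level-$n$ path, relabelled by $b\mapsto d$, $c\mapsto b$, $d\mapsto c$, i.e.\ $D\to C\to B\to D$ on pair-labels, and the new $a$-edges ($1u\leftrightarrow 0u$) and new $\{b,c\}$-edges ($00v\leftrightarrow 01v$) are interleaved between them. So the tree-side non-$a$ label sequence obeys the recursion $\rho^{\mathrm{tree}}_{n+1}(2k)=\theta\bigl(\rho^{\mathrm{tree}}_n(k)\bigr)$ and $\rho^{\mathrm{tree}}_{n+1}(\text{odd})=D$, with $\theta:D\to C\to B\to D$ --- a different recursion from $\rho_{n+1}=\rho_n\alpha_n\rho_n$, and the two agree precisely because $\theta$ advances the $v_2$-class by one while $v_2(2k)=v_2(k)+1$. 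This identification is exactly the step you flag as ``the only real work'' and then do not carry out, so as written the proposal is a sound plan whose decisive verification is still owed. It does go through (the computation just indicated, plus the routine checks of the path structure, the endpoints $1^n$ and $1^{n-1}0$, and transitivity), but it must be written down for the proof to be complete.
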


\begin{proof}
We show a bit more than claimed in the lemma: the words $w_n$ are palindromes, they begin and end with $a$, and the conjugacy $\phi_n : [0, 2^n-1] \to \{0,1\}^n$, where $[0, 2^n-1]$ are interpreted as star positions in starrings of $w_n$, can be chosen so that $\phi_n(0) = 1^n$ and $\phi_n(2^n-1) = 1^{n-1} 0$. Clearly this is true for $n = 1$: The tree action $\act_\tree$ of level $1$ (i.e.\ on $\{0,1\}$) is clearly isomorphic to the jump action on the cycle of two SAWs $W_1 = (\str a \; a\str)$; and indeed $\str a$ corresponds to $1$ and $a \str$ corresponds to $0$.

Suppose we have shown all these claims for level $n$. We immediately note that $w_{n+1} = w_n \alpha w_n$ is a palindrome, and begins and ends in the symbol $a$.

We define $\phi_{n+1}$ inductively based on $\phi_n$. The idea is to observe that during the tree action orbit of $1^n 1$ (following the unique trajectory), in the first $n$ bits we first follow the level-$n$ orbit of $1^n$ forward until the end (i.e.\ $1^{n-1} 0$), then we flip the ``new bit'' at depth $n$ if and only if the suitable generator is used (and this bit cannot be affected at any other point), then in the first $n$ bits we retrace the orbit back from $1^{n-1} 0$ to $1^n$. A moment's reflection shows that, since $w_n$ is a palindrome, this conjugates the actions.\footnote{Readers familiar with the Gray code may find it useful to observe that $\phi_{n+1}$ is constructed from $\phi_n$ according to the inductive definition of the Gray code, up to the bit flip $0 \leftrightarrow 1$, and indeed $(\phi_n(j))_{j = 0}^{2^n-1}$ is the standard Gray code up to this bit flip.}



In the remainder of the proof, we explain the same in formulas. 
Denote (temporarily) the $j$th starring of $w_n$ by $s_n(j)$, i.e.\ if $w_n = u \cdot v$ where $|u| = j$, then $s_n(j) = u \str v$. For $g \in H$ and $j \in [0, 2^n-1]$, it will be convenient to use the notation $g \act_n j = s_n^{-1}(g \act_{\jump} s_n(j))$. I.e.\ $j$ can represent the $j$th starring of any $w_n$ with $2^n > j$, and the subscript of $\act$ tells us which $w_n$ we use when acting. 


The description two paragraphs above suggests that if $\phi_n(j) = u$, then we should let $\phi_{n+1}(j) = u1$ and $\phi_{n+1}(F(j)) = u0$ where $F(j) = F_{n+1}(j) = 2^{n+1} - 1 - j$. One should think of $F$ as flipping starring positions around the middle of the central $\alpha$-symbol in $w_{n+1} = w_n \alpha w_n$. Equivalently, for $j \in [0, 2^n-1]$ we have $\phi_{n+1}(j) = \phi_n(j) 1$, and for $j \in [2^n, 2^{n+1}-1]$ we have $\phi_{n+1}(j) = \phi_n(F(j)) 0$. The formulas clearly define all the values of $\phi_{n+1}$. To see that this conjugates the actions, let $g$ be any of the generators and consider one of the starrings $j \in [0, 2^{n+1} - 1]$.

First, suppose $j, g \act_{n+1} j \in [0, 2^n-1]$. Then we have
\begin{align*}
\phi_{n+1}(g \act_{n+1} j) &= \phi_{n+1}(g \act_n j) \\
&= \phi_n(g \act_n j) 1 \\
&= g \act_{\tree} \phi_n(j) 1 \\
&= g \act_{\tree} \phi_{n+1}(j)
\end{align*}
Here, the first equality holds because $w_{n+1}$ begins with $w_n$. The second holds because $g \act_n j = g \act_{n+1} j$ holds, which in turn is true because
$j, g \act_{n+1} j \in [0, 2^n-1]$, and by the definition of $\phi_{n+1}$. 
The third holds because $\phi_n$ is a conjugacy so $\phi_n(g \act_n j) = g \act_{\tree} \phi_n(j)$, and the last bit is not affected in the application $g \act_{\tree} \phi_n(j_n) 1$ because it can only be affected if $\phi_n(j) = 1^{n-1} 0$, and in this case only if $\alpha \in S_g$. But then we would have $g \act_{n+1} j = 2^n$, contradicting the assumption $g \act_{n+1} j \in [0, 2^n-1]$. The fourth equality is by definition.

Next, suppose $j, g \act_{n+1} j \in [2^n, 2^{n+1} - 1]$. 
Then we have the exact same calculation, though it looks more complicated. We first calculate $g \act_{n+1} j$ in terms of the action on starrings of $w_n$. 
If $g \act_n F(j) = F(j) + k$ then $g \act_{n+1} j = j - k$, because $w_n$ is a palindrome and because by assumption $j - k \in [2^n, 2^{n+1} - 1]$. This gives
\begin{align*}
g \act_{n+1} j &= j - k \\
&= j - (g \act_n F(j) - F(j)) \\
&= j + F(j) - g \act_n F(j) \\
&= 2^{n+1} - 1 - g \act_n F(j) \\
&= F(g \act_n F(j))
\end{align*}

Thus,
\begin{align*}
\phi_{n+1}(g \act_{n+1} j) &= \phi_{n+1}(F(g \act_n F(j))) \\
&= \phi_n(g \act_n F(j)) 0 \\
&= (g \act_{\tree} \phi_n(F(j))) 0 \\
&= g \act_{\tree} \phi_{n+1}(j) \\
\end{align*}
Here the first equality was calculated above. The second is because $F$ is an involution and $F(g \act_n F(j)) \in [2^n, 2^{n+1} - 1]$ (which in turn holds because $g \act_n F(j) \in [0, 2^n-1]$, which is clear). The third equality is what we showed in the previous case. The fourth is by the definition of $\phi_{n+1}$ for inputs $\geq 2^n-1$, and the fact that $j, g \act_{n+1} j \in [2^n, 2^{n+1} - 1]$ implies that the tree action does not flip the final bit of $\phi_{n+1}(j)$.

Finally assume $j \in [0, 2^n-1]$ and $g \act_{n+1} j \notin [0, 2^n-1]$ (the case of $j \notin [0, 2^n-1]$ and $g \act_{n+1} j \in [0, 2^n-1]$ being symmetric). Then of course $j = 2^n-1$ and $g \act_{n+1} j = 2^n$, and if $w_{n+1} = w_n \alpha w_n$, then $g \in \{b, c, d\}$ and $\alpha \in S_g$. Direct calculations give
\[ \phi_{n+1}(g \act_{n+1} j) = \phi_n(F(2^n)) 0 = \phi_n(2^n - 1) 0 = 1^{n-1} 0 0 \]
and
\[ g \act_{\tree} \phi_{n+1}(j) = g \act_{\tree} \phi_n(j) 1 = g \act_{\tree} 1^{n-1} 0 1 = 1^{n-1} 0 0 \]
because the fact that $\alpha \in S_g$ implies the last equality here, by the definition of the tree action.

This concludes the proof.
\end{proof}

We can now easily derive the usual topological full group action.

\begin{definition}
\label{def:Vee}
Let $\Vee \subset \{a, B, C, D\}^\Z$ be the smallest $\Z$-subshift whose language contains the words $w_n$.
\end{definition}

\begin{lemma}
\label{lem:Language}
The subshift $\Vee$ is well-defined, and the following are equivalent for a word $u$ with $u \leq |w_n|$:
\begin{itemize}
\item $u$ is in the language of $\Vee$,
\item $u$ is a subword of some $w_m$,
\item $u$ is a subword of $w_{n+3}$,
\item $u$ is a subword of $w_n \alpha w_n$ for some $\alpha \in \{B, C, D\}$.
\end{itemize}
\end{lemma}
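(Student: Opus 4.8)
The plan is to prove the four conditions equivalent via the cycle $(1)\Leftrightarrow(2)\Rightarrow(4)\Rightarrow(3)\Rightarrow(2)$, where I read the hypothesis ``$u\leq|w_n|$'' as $|u|\leq|w_n|=2^n-1$. Everything rests on one structural observation about the $w_n$, which I would isolate first. Write $\alpha_n$ for the central letter, so that $w_{n+1}=w_n\alpha_n w_n$; by \eqref{eq:AlphaChoice} the sequence $(\alpha_n)_{n\geq 1}$ is the $3$-periodic word $D,C,B,D,C,B,\dots$ Unfolding the recursion gives, for every $m\geq n$, a decomposition of $w_m$ as a concatenation of $2^{m-n}$ copies of $w_n$ separated by single letters, each separator lying in $\{B,C,D\}$; moreover two consecutive separators are at distance $|w_n|+1=2^n$. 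I expect this bookkeeping to be the main obstacle: the whole lemma is elementary once the block/separator structure and the spacing $2^n$ are pinned down, but getting the modular indexing of $(\alpha_n)$ exactly right is where care is needed.

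For well-definedness together with $(1)\Leftrightarrow(2)$, let $L$ be the set of words that occur as a subword of some $w_m$. Then $L$ is factorial by definition, and it is prolongable: given $u\in L$, say $u$ a subword of $w_m$, the expansion $w_{m+2}=w_m\alpha_m w_m\alpha_{m+1}w_m\alpha_m w_m$ contains an occurrence of $u$ inside its second $w_m$-block, which is flanked by $\alpha_m$ on the left and $\alpha_{m+1}$ on the right, so $u$ extends both ways within $w_{m+2}\in L$. A factorial prolongable language is the language of a nonempty subshift, so the subshift $X_L$ forbidding exactly the words outside $L$ satisfies $L(X_L)=L$ and has every $w_n$ in its language. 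Any subshift whose language contains all $w_n$ has factorial language, hence contains $L$, hence contains $X_L$; thus $X_L$ is the smallest such subshift and $\Vee:=X_L$ is well-defined with language exactly $L$. This is precisely the equivalence of $(1)$ and $(2)$.

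Next I would prove $(2)\Rightarrow(4)$ using the spacing. Assume $|u|\leq 2^n-1$ and $u$ is a subword of $w_m$. If $m\leq n$, then $w_m$ is a prefix of $w_n$, so $u$ is a subword of $w_n$ and hence of $w_n\alpha w_n$ for any $\alpha$. If $m>n$, apply the block decomposition: since consecutive separators are $2^n$ apart and $|u|\leq 2^n-1<2^n$, any window of length $|u|$ meets at most one separator. Hence $u$ is a subword of a single $w_n$-block, or of $w_n\beta w_n$ where $\beta\in\{B,C,D\}$ is the unique separator it meets; either way $(4)$ holds.

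Finally $(4)\Rightarrow(3)\Rightarrow(2)$. The point of the ``$+3$'' is that three consecutive terms $\alpha_n,\alpha_{n+1},\alpha_{n+2}$ of the $3$-periodic sequence exhaust $\{B,C,D\}$. So each $\alpha\in\{B,C,D\}$ equals some $\alpha_{n+j}$ with $j\in\{0,1,2\}$, and $w_n\alpha_{n+j}w_n$ is a factor of $w_{n+3}$: for $j=0$ it is the prefix $w_{n+1}$ of $w_{n+3}$; for $j=1$ it is the factor around the central separator of $w_{n+2}$; and for $j=2$ it is the factor around the central $\alpha_{n+2}$ of $w_{n+3}=w_{n+2}\alpha_{n+2}w_{n+2}$, flanked by the suffix $w_n$ and prefix $w_n$ of the two copies of $w_{n+2}$. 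Therefore any subword of $w_n\alpha w_n$ is a subword of $w_{n+3}$, giving $(4)\Rightarrow(3)$, while $(3)\Rightarrow(2)$ is immediate as $w_{n+3}$ is one of the $w_m$. Combined with $(1)\Leftrightarrow(2)$ this closes the cycle and proves all four conditions equivalent.
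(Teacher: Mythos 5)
Your proof is correct and follows essentially the same route as the paper's: well-definedness and the equivalence of the first two items via factoriality plus extendability (using $w_{m+2}=w_m\alpha w_m\beta w_m\alpha w_m$), the implication to the fourth item via the decomposition of $w_m$ into $w_n$-blocks separated by single letters at spacing $2^n$ (the paper phrases this as the regular expression $(w_n(B+C+D))^*w_n$), and the implication to the third item by locating all three words $w_n\alpha w_n$, $\alpha\in\{B,C,D\}$, inside $w_{n+3}$. The only cosmetic difference is that the paper exhibits the latter directly from the full expansion of $w_{n+3}$ rather than case-by-case on $j\in\{0,1,2\}$.
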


\begin{proof}
We claim that the set $U$ of all subwords of the words $w_n$ is extendable, meaning one can always find a word of the form $uw_nv$ in $U$ with $|u|, |v| > 0$. The properties of extendability and closure under taking subwords are $U$ known to characterize languages of subshift, so this shows that $\Vee$ is well-defined and that $U$ is its language (because any subshift containing the words $w_n$ would certainly also have all words of $U$ in its language). In other words it shows that equality of the first two items. For extendability of $U$, it suffices to show that $w_n$ itself is extendable within $U$, and this is clear because $w_{n+2} = w_n \alpha w_n \beta w_n \alpha w_n$ where $\alpha, \beta \in \{B, C, D\}$.

It is clear that the third item only describes words in $U$. To see that the fourth item also only describes words in $U$, observe that
\begin{equation}
\label{eq:third} w_{n+3} = w_n \alpha w_n \beta w_n \alpha w_n \gamma w_n \alpha w_n \beta w_n \alpha w_n \end{equation}
where $\alpha, \beta, \gamma$ are distinct.

Suppose now that $u \in U$ and $u$ with $u \leq |w_n|$. The claim in the fourth item clearly holds, because we see from the inductive definition that all words $w_k$ fit the regular expression $(w_n (B + C + D))^* w_n$, and all subwords of such words, which have length at most $|w_n|$, fit inside subword of the form $w_n \alpha w_n$ for some $\alpha \in \{B, C, D\}$. Finally, \eqref{eq:third} shows that $w_{n+3}$ already contains all these words.
\end{proof}

The following is proved in \cite{Vo10}.

\begin{lemma}
\label{lem:MinimalNoPeriodic}
The subshift $\Vee$ (under the shift action of $\Z$) is minimal and has no periodic points.
\end{lemma}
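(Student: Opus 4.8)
The plan is to treat the two assertions separately, relying throughout on the description of the language of $\Vee$ from Lemma~\ref{lem:Language}: a word lies in the language if and only if it is a subword of some $w_n$.

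For minimality I would apply the criterion recalled in Section~\ref{sec:Preliminaries}, namely that a $\Z$-subshift is minimal if and only if every word $u$ of its language occurs syndetically, i.e.\ there is an $N$ such that every length-$N$ word of the language contains $u$ as a subword. The key structural fact is that for $m > n$ the word $w_m$ is a concatenation of $2^{m-n}$ copies of $w_n$ separated by single letters, with the $j$-th copy starting at position $j\cdot 2^n$; this is immediate by induction from $w_{n+1} = w_n \alpha w_n$ together with $|w_n| = 2^n-1$. Given $u$ in the language, I would fix $n$ with $u$ a subword of $w_n$ and set $N = 2^{n+1}-1$. Any length-$N$ word $v$ of the language is a subword of some $w_m$, hence a window $[s, s+N-1]$ of $w_m$; since every interval of length $2^n$ contains a multiple of $2^n$, such a window entirely contains one of the copies $[j 2^n,\, j 2^n + 2^n - 2]$ of $w_n$, and therefore contains $u$. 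This is exactly the required syndeticity.

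For the absence of periodic points I would first reduce to showing that $\Vee$ is infinite: a minimal subshift possessing a periodic point is a single finite orbit, so once minimality is in hand, aperiodicity is equivalent to infinitude. By the Morse--Hedlund theorem it then suffices to exhibit one non-eventually-periodic element of $\Vee$, the natural candidate being the one-sided limit $w_\infty = \lim_n w_n$ (each $w_n$ is a prefix of $w_{n+1}$, and all factors of $w_\infty$ lie in the language). The heart of the matter is an explicit description of $w_\infty$: its letters at even positions are all $a$, while the $k$-th non-$a$ letter, at position $2k-1$, equals $f(\nu_2(k))$, where $\nu_2$ is the $2$-adic valuation and $f(j) \in \{D, C, B\}$ according to whether $j \equiv 0, 1, 2 \bmod 3$. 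I would prove this by induction using the self-similar decomposition $w_{n+1} = w_n \alpha w_n$: the central separator sits at non-$a$ index $2^{n-1}$ and carries valuation $n-1$, matching $\alpha$ by the choice in \eqref{eq:AlphaChoice}, while adding $2^{n-1}$ to an index of smaller valuation leaves the valuation unchanged, which handles the indices falling in the second copy of $w_n$.

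Granting this formula, aperiodicity of the non-$a$ subsequence $t_k = f(\nu_2(k))$ follows from a valuation argument: if $t$ were eventually periodic with period $q$ past some preperiod, I would pick a multiple $K$ of $q$ beyond the preperiod, whence $t_{K\cdot 2^j} = t_K$ for all $j \geq 0$; but $\nu_2(K\cdot 2^j) = \nu_2(K) + j$ runs through all large values, so $f(\nu_2(K \cdot 2^j))$ cycles through all three letters $D, C, B$ as $j$ grows, a contradiction. Hence $w_\infty$ is not eventually periodic, $\Vee$ is infinite, and being minimal it has no periodic points. The step I expect to require the most care is the valuation formula for the non-$a$ letters; the remaining ingredients are routine combinations of the language description with standard symbolic-dynamics facts.
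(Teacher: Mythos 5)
Your argument is correct. The minimality half is essentially the paper's own proof: both reduce to syndetic occurrence of every word of the language, using the fact that $w_m$ decomposes as copies of $w_n$ separated by single letters (the regular expression $(w_n(B+C+D))^*w_n$ noted in the proof of Lemma~\ref{lem:Language}), so that any window of length about $2|w_n|$ captures a full copy of $w_n$; your bound $N = 2^{n+1}-1$ matches the paper's $2|w_n|+1$.

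For aperiodicity you take a genuinely different route. The paper also first reduces via minimality to the statement that no point is $p$-periodic, but then finishes in one line: for $|w_n| \geq p$, a $p$-periodic configuration containing $w_n\alpha w_n$ forces $\alpha$ to equal a letter of $w_n$ sitting $p$ positions earlier, so at most one of the three words $w_n B w_n$, $w_n C w_n$, $w_n D w_n$ could occur, whereas Lemma~\ref{lem:Language} puts all three in the language. You instead compute the fixed point $w_\infty = \lim_n w_n$ explicitly — $a$ at even positions, $f(\nu_2(k))$ at the $k$-th odd position — and rule out eventual periodicity by letting the $2$-adic valuation of $K\cdot 2^j$ grow. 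Your valuation formula checks out against \eqref{eq:AlphaChoice} (the central separator of $w_{n+1}$ is the $2^{n-1}$-th non-$a$ letter, and indices in the second copy keep their valuation), and the Morse--Hedlund reduction from ``no periodic points'' to ``$w_\infty$ not eventually periodic'' is sound. The paper's argument is shorter and needs no explicit description of the limit word; yours costs an induction but buys an explicit Toeplitz-type description of $w_\infty$, which is precisely the structure alluded to in the remark following the lemma (that \cite{Vo10} obtains the result from $\Vee$ being the orbit closure of a Toeplitz sequence). Either is a complete proof.
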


\begin{proof}
If $u$ appears in $\Vee$, then it appears in some $w_n$ by definition. As observed in the previous proof, all words in the language of $\Vee$ are subwords of words in $(w_n \{B, C, D\})^* w_n$, thus any word of length $2|w_n| + 1$ contains the word $u$. For aperiodicity, if a minimal subshift has a $p$-periodic point, then all its points are easily seen to be $p$-periodic. But obviously if $|w_n| \geq p$, at most one of the words $w_n \alpha w_n$ can be $p$-periodic for $\alpha \in \{B, C, D\}$, and all of these words appear in the language.
\end{proof}

Another way to see the previous lemma is to observe that $\Vee$ can also be generated by a primitive substitution \cite{Vo20}, as it is well-known that the subshift generated by a primitive substitution is minimal and has no periodic points. In \cite{Vo10} it is in turn deduced from the fact that this subshift is the shift orbit closure of a Toeplitz sequence. 

\begin{lemma}
The group $\Grig$ embeds in the topological full group of $(\sigma, \Vee)$, with the action induced by the jump action $\act_\jump$ on the set of SAWs $W$.
\end{lemma}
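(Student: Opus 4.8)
The plan is to present each generator of $\Grig$ as an explicit topological full group element whose local behaviour is the jump action, and then import to the bi-infinite setting the facts we already have about $\act_\jump$ on the finite words $w_n$. Concretely, I would first rewrite the jump action in the $\delta_U$ notation: viewing $x \in \Vee$ as a bi-infinite $a$-alternating word carrying a star between coordinates $-1$ and $0$, the rule ``$g$ lets the star hop over an adjacent symbol of $S_g$'' becomes, for $g \in \{a,b,c,d\}$, the assignment $U_g = \{x \in \Vee : x_0 \in S_g\}$ with $g$ acting by $\delta_{U_g}$ (so in particular $a$ acts by $\delta_{[a]_0}$). Each $U_g$ is clopen, and $U_g \cap \sigma(U_g) = \emptyset$ because a point of this intersection would have two consecutive symbols from $S_g$, i.e.\ two consecutive non-$a$ symbols (for $g \in \{b,c,d\}$) or two consecutive $a$'s (for $g = a$), which the alternation property forbids in the language of $\Vee$. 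Hence each $\delta_{U_g}$ is a genuine involution in $\llbracket (\sigma, \Vee) \rrbracket$, and comparing the two defining formulas shows that $\delta_{U_g}$ is exactly the jump action of $g$ on the star-at-origin word. Consequently a word $w = g_1 \cdots g_\ell$ acts by $\bar w \act x = \sigma^{c_w(x)}(x)$ for an integer cocycle $c_w(x) \in [-\ell, \ell]$ (the net displacement of the star), and $c_w(x)$ depends only on $x|_{[-\ell, \ell]}$.

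Next comes the homomorphism property, whose content is that $u =_\Grig v$ forces $\bar u = \bar v$. Fix $x \in \Vee$; the values $c_u(x), c_v(x)$ are determined by a window $x|_{[-M,M]}$ with $M = \max(|u|,|v|)$. By minimality of $\Vee$ (Lemma~\ref{lem:MinimalNoPeriodic}) together with the language description (Lemma~\ref{lem:Language}), this window occurs inside some $w_N$ with a buffer larger than $M$ on both sides. On such an \emph{interior} starring the bi-infinite computation of $c_u$ and $c_v$ never reaches an end of $w_N$, so it agrees with the \emph{finite} jump action of $u$ and $v$ on that starring of $w_N$. By Lemma~\ref{lem:ActionOnStarrings} the latter is a genuine action of $\Grig$, so $u$ and $v$ produce the same displacement; thus $\bar u \act x = \sigma^{c_u(x)}(x) = \sigma^{c_v(x)}(x) = \bar v \act x$. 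As $x$ was arbitrary, $g \mapsto \delta_{U_g}$ descends to a homomorphism $\rho : \Grig \to \llbracket (\sigma, \Vee) \rrbracket$.

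For injectivity, let $w \neq 1$ with $\ell = |w|$. Since the tree action is the (faithful) defining action of $\Grig$, $w$ moves some word $u$ at a level $n$; then for every $N \geq n$ all $2^{N-n}$ extensions of $u$ are moved at level $N$, so through the conjugacy of Lemma~\ref{lem:ActionOnStarrings} the action $\act_\jump$ of $w$ moves at least $2^{N-n}$ starrings of $w_N$. Choosing $N$ with $2^{N-n} > 2(\ell+1)$ (more than the number of star positions within distance $\ell$ of an end of $w_N$), at least one moved starring $p$ lies in $[\ell+1, 2^N - 2 - \ell]$; its length-$\leq \ell$ trajectory then stays strictly inside $[1, 2^N-2]$, where the finite and bi-infinite jump rules coincide. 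Realizing $w_N$ in a point $x \in \Vee$ (it is in the language) with the origin at $p$, we obtain $c_w(x) = p' - p \neq 0$, where $p' = w \act_\jump p$; hence $\bar w \act x = \sigma^{p'-p}(x) \neq x$ since $\Vee$ has no periodic points (Lemma~\ref{lem:MinimalNoPeriodic}). Thus $\rho(w) \neq \mathrm{id}$ and $\rho$ is an embedding.

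The main obstacle, underlying both steps, is boundary bookkeeping: the clean input we have (Lemma~\ref{lem:ActionOnStarrings}) concerns the finite action on $w_n$, where the star reflects at the ends, whereas on $\Vee$ it moves freely, so one must consistently pass to interior occurrences inside a large $w_N$ to guarantee that the finite and bi-infinite computations agree. The device that makes this work in the injectivity step is the observation that a nontrivial $w$ moves a number of starrings of $w_N$ that grows like $2^{N-n}$, which eventually forces a moved starring far from both ends; extracting interior occurrences of a prescribed window from minimality (or from the recursion $w_{N} = w_{N-1}\,\alpha\,w_{N-1}$) is the one remaining point needing care, the rest being routine verification.
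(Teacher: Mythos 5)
Your proposal is correct and follows essentially the same route as the paper: the generators are realized as $\delta_{U_g}$ for the clopen sets $U_g = \{x : x_0 \in S_g\}$, well-definedness and faithfulness are reduced by locality of the jump rule to the finite action on starrings of $w_N$ (Lemma~\ref{lem:ActionOnStarrings}), and aperiodicity of $\Vee$ converts a nonzero star displacement into a genuinely moved point. Your counting argument producing a moved \emph{interior} starring of $w_N$ (so that the finite and bi-infinite computations agree along the whole trajectory) spells out a boundary issue that the paper's faithfulness step passes over quickly, but it is a refinement of the same argument rather than a different one.
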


\begin{proof}
To get the cocycles of the elements of $\Grig$, we simply mimic the jump action, and shift the origin as if it were the star. More precisely, the generator $\alpha \in \{a, b, c, d\} \subset H$ is precisely $\delta_{U_\alpha}$, where
where $U_a = [a]_0, U_b = [C]_0 \cup [D]_0, U_c = [B]_0 \cup [D]_0, U_d = [B]_0 \cup [C]_0$. It is trivial to check that this gives an action of $H$. We will also denote this new action by $\act_\jump$, i.e.\ for now we have a system $(\act_\jump, H, \Vee)$.

Suppose now $w \in H$ is presented as a freely reduced word over the generators of $H$, let $\gamma_i : \Vee \to \Z$ be the cocycle of the element $w_{[i, |w|-1]} \in H$, and let $k$ be large enough that $\gamma_i(\Vee) \subset [-k+1, k-1]$ for all $0 \leq i \leq |w| - 1$. Then for all $i$, $w_{[i, |w|-1]} \act_{\jump} (x) = \sigma^\ell(x)$ for some $|\ell| < k$. From this, and the fact $\Vee$ has no periodic points (Lemma~\ref{lem:MinimalNoPeriodic}), it follows by comparing the definitions of the two jump actions that for all $x \in \Vee$, if we let $u = x_{[-k, -1]}, v = x_{[0, k]}$, we have $w \act_{\jump} x = x \iff w \act_{\jump} u \str v = u \str v$. Namely, the movement of the origin in our topological full group action on $\Vee$ exactly mimics the movement of the star in the jump action.

Thus, since the language of $\Vee$ contains only subwords of words $w_n$ for various $n$, $\act_\jump$ is in fact a well-defined action of $\Grig$. Since the jump action of $\Grig$ on the union of starrings of $w_n$ over all $n$ is clearly faithful (because for a fixed $n$ this is conjugate to the tree action on $\{0,1\}^n$) and they all appear in the language of $\Vee$, this is in fact a faithful representation of $\Grig$.
\end{proof}

We define the \emph{reversal} map $f : \Vee \to \Vee$ by $f(x)_i = x_{-1-i}$. This is clearly an involution, and it preserves $\Vee$, since the language of $\Vee$ is obviously invariant under reversal of finite words by Lemma~\ref{lem:Language}. The reversal $f$ also has no fixed points, simply because for all $\alpha \in \{a, B, C, D\}$ the cylinder $[\alpha]_0$ is mapped by $f$ onto $[\alpha]_{-1}$, and these cylinders are disjoint.

\begin{lemma}
\label{lem:VTCover}
The system $(\act_\jump, \Grig, \Vee)$ is an extension of $(\act_\tree, \Grig, \Tree)$ by a factor map $\phi : \Vee \to \Tree$ which is $2$-to-$1$ on a residual set, and $6$-to-$1$ on a countable set of fibers. We also have $\phi \circ f = f \circ \phi$.
\end{lemma}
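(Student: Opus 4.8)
The plan is to realise $\phi$ as a composite $\Vee \xrightarrow{\,P\,} \mathbb{O} \xrightarrow{\,\overline{G}\,} \Tree$, where $\mathbb{O} = \varprojlim_n \mathbb{Z}/2^n\mathbb{Z}$ is the dyadic odometer. The first map records the position of the origin in the hierarchical supertile structure. Concretely, every $x \in \Vee$ has a canonical decomposition into \emph{level-$n$ supertiles} (copies of $w_n$) obtained by cutting at all markers of level $\geq n$, where a marker is a non-$a$ letter whose level is read off from the nesting $w_{n+1} = w_n \alpha_n w_n$. Since $\Vee$ is minimal and aperiodic (Lemma~\ref{lem:MinimalNoPeriodic}), this decomposition is unique and locally recognisable from the combinatorics of Lemma~\ref{lem:Language}: the level-$n$ supertile containing the origin and the origin's position $p_n(x)\in[0,2^n-1]$ inside it depend only on a bounded window of $x$, and $p_n(x)\equiv p_{n+1}(x)\pmod{2^n}$ (the top bit records whether the origin lies in the left or right $w_n$-half of its level-$(n+1)$ supertile). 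Thus $P(x)=\lim_n p_n(x)\in\mathbb{O}$ is a well-defined continuous map intertwining $\sigma$ with $+1$. Because $\tau$ is primitive, aperiodic, of constant length $2$, of height $1$, and has a coincidence (its first column is the constant $a$, as $\tau(\ell)$ begins with $a$ for every letter $\ell$), Dekking's criterion gives that $P$ is surjective and $1$-to-$1$ off a countable set. The second factor is the Gray code $G(Q)_i = Q_i + Q_{i+1}$ (mod $2$): it is continuous, surjective, and exactly $2$-to-$1$, with fibres the complementary pairs $\{Q,\overline{Q}\}$, $\overline{Q}=-1-Q$. I would then define $\phi = \overline{G}\circ P$, the global complement $\overline{(\cdot)}$ only reconciling the convention $\phi_n(0)=1^n$ of Lemma~\ref{lem:ActionOnStarrings} with $G(0)=0^\omega$; it does not affect any fibre cardinality.

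Continuity of $\phi$ is immediate since each coordinate of $\phi(x)$ is determined by a bounded window. For equivariance I would use Lemma~\ref{lem:ActionOnStarrings} levelwise: by the preceding embedding lemma each generator $g$ acts on $\Vee$ as $x\mapsto\sigma^{c_g(x)}(x)$ with $c_g$ a bounded continuous cocycle factoring through $P$, so $P(gx)=P(x)+c_g$ descends to a $\Grig$-action on $\mathbb{O}$; Lemma~\ref{lem:ActionOnStarrings} identifies this action, level by level via $\phi_n$ (hence via $G$ in the limit), with the tree action, so $\phi$ is $\Grig$-equivariant. Surjectivity then follows since $(\Grig,\Tree)$ is minimal (the tree action is level-transitive) and the image is a nonempty closed invariant set, or directly from the fibre construction below. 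Finally, reversal sends $p_n\mapsto F_n(p_n)=2^n-1-p_n\equiv -1-p_n\pmod{2^n}$ because each $w_n$ is a palindrome, so $P\circ f = -1-P(\cdot)=\overline{P(\cdot)}$; as $G$ is invariant under complementation, $\phi\circ f=\phi$. This is the asserted intertwining with $f$ (the reversal descends to the identity on $\Tree$).

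For the fibres, fix $t\in\Tree$ and write $G^{-1}(\overline{t})=\{Q,\overline{Q}\}$, so that $\phi^{-1}(t)=P^{-1}(Q)\sqcup P^{-1}(\overline{Q})$. Since $Q\in\mathbb{Z}\subset\mathbb{O}$ if and only if $\overline{Q}=-1-Q\in\mathbb{Z}$, exceptionality is complement-symmetric. If $Q\notin\mathbb{Z}$ then both $p_n$ and $2^n-1-p_n$ are unbounded, so the origin's supertiles exhaust $\mathbb{Z}$ and $x$ is uniquely determined by its address; hence $|P^{-1}(Q)|=|P^{-1}(\overline{Q})|=1$ and $\phi^{-1}(t)=\{x,f(x)\}$ has size $2$ (using $P\circ f=\overline{P(\cdot)}$). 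The $t$ with $Q\in\mathbb{Z}$ form a countable, $\Grig$-invariant set; I would identify it with the $\Grig$-orbit of $1^\omega$ (the image of the integer points of $\mathbb{O}$). For such $Q=m\geq 0$ one finds $x_{[-m,\infty)}=w_\infty$ with the left tail free, so $|P^{-1}(m)|$ equals the number $L$ of left-extensions of the fixed ray $w_\infty$ inside $\Vee$, independent of $m$; by reversal the same count $L$ governs negative integers, so every exceptional fibre has size $2L$.

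The main obstacle is the exact evaluation $L=3$, giving special fibres of size $6$. I would prove this by analysing the left-extensions of $w_\infty$: the admissible preceding markers are constrained by the marker sequence of $\Vee$, which is itself substitutive, so the left-extensions are governed by a finite self-similar rule whose solution must be shown to have exactly three branches. This is precisely the content matching Vorobets' statement that $(\Grig,\Voro)$ is $3$-to-$1$ over the orbit of $1^\omega$: since $\Voro=\Vee/f$ with $f$ fixed-point-free, $L=3$ is equivalent to the three singular marked Schreier graphs, and one may either quote this or verify it directly. The delicate points are ruling out extra extensions (so that $L=3$ exactly, not more) and confirming that the exceptional set is uniformly $6$-to-$1$ with no intermediate fibre sizes, which follows from the uniformity $|P^{-1}(m)|=L$ established above together with the complement-symmetry of exceptionality.
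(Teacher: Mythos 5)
Your construction is, at bottom, the paper's proof repackaged: the unique hierarchical $w_n$-decomposition, the palindrome property, the conjugacies $\phi_n$ of Lemma~\ref{lem:ActionOnStarrings} (which the paper's footnote already identifies with the reflected Gray code up to complementation), and the three-fold branching at singular points are exactly the ingredients the paper uses; you have made the intermediate odometer $\mathbb{O}$ and the Gray-code map $\overline{G}$ explicit. That repackaging is attractive, and your fibre count is correct. The evaluation $L=3$ is also less of an obstacle than you fear: the letter preceding a copy of $\lim_n w_n$ must be non-$a$ by alternation, each of $B,C,D$ is realizable by Lemma~\ref{lem:Language}, and everything further left is forced to be the nested left-infinite limit of the $w_n$ by uniqueness of the decomposition; no self-similar branching analysis is needed.

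The genuine gap is in the equivariance step. You assert that the generator cocycles $c_g$ factor through $P$, so that the $\Grig$-action descends to an action on $\mathbb{O}$. This is false. Consider the three points $x^B,x^C,x^D\in P^{-1}(0)$, which satisfy $x^\alpha_{[0,\infty)}=\lim_n w_n$ and $x^\alpha_{-1}=\alpha$ and agree to the left of position $-1$. The cocycle of $b$ is read off from $x_{-1}x_0$, so $c_b(x^B)=0$ while $c_b(x^C)=-1$, whence $P(b\act_\jump x^B)=0\neq -1=P(b\act_\jump x^C)$: there is no induced action on $\mathbb{O}$. The action only descends to the quotient $\mathbb{O}/(Q\sim -1-Q)$, and it is precisely the Gray code's identification of the complementary pair $\{0,-1\}$ that rescues the composite $\overline{G}\circ P$. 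So equivariance of $\phi=\overline{G}\circ P$ must be argued directly --- for instance on the dense set of non-exceptional points, where the jump action genuinely mimics the action on starrings of $w_n$, and then extended by continuity (this is what the paper does), or by an explicit check over the exceptional fibres. A smaller caveat: Dekking's coincidence criterion yields measure-theoretic a.e.\ injectivity of $P$, not the topological statement ``injective off a countable set''; your direct argument (the central supertiles exhaust $\Z$ when $P(x)\notin\Z$) is what actually carries that step, so keep it and drop the appeal to Dekking.
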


\begin{proof}
The definition of $\phi$ in a sense arises directly as an inverse limit of the maps $\phi_n$. Let $x \in \Vee$ be arbitrary, and $n \geq 1$. Then by definition of $\Vee$, $x$ is of the form
\[ \dots w_n \beta_{-1} w_n \beta_0 w_n \beta_1 w_n \beta 2 \dots \]
for some symbols $\beta_i \in \{B, C, D\}$. We show by induction on $n$ that for each $x \in V$, this choice of $n$th level decomposition is unique. This is obvious for $n = 1$ as $w_1 = a$. For $n > 1$, we first use the inductive hypothesis to find that there is a unique decomposition
\[ \dots w_{n-1} \beta_{-1} w_{n-1} \beta_0 w_{n-1} \beta_1 w_{n-1} \beta_2 \dots. \]
Any decomposition on level $n$ will give a decomposition on level $n-1$, so we only have two choices left for the $n$th level decomposition.

By definition of $\Vee$, if $w_n = w_{n-1} \alpha w_{n-1}$, one of the period-$2$ subsequences of $\beta_i$ is all-$\alpha$. Say the one containing $\beta_1$ is, so we can write the decomposition of $x$ above as
\[ \dots (w_{n-1} \alpha w_{n-1}) \beta_{-2} (w_{n-1} \alpha w_{n-1}) \beta_0 (w_{n-1} \alpha w_{n-1}) \beta_2 (w_{n-1} \alpha w_{n-1}) \dots \]
If it were possible to find such $x$ where an arbitrarily long subsequence of $b_{2i}$ is also all-$\alpha$, we would find the periodic point $(w_{n-1} \alpha)^\Z$ in $\Vee$. But this is contradicted by the fact $\Vee$ has no aperiodic points.

We call the $w_n$ that appear in the decomposition the \emph{natural $w_n$s}. From the uniqueness, it is automatic that the set of points $x$ where a natural $w_n$ appears in the coordinates $[0, 2^n-1]$ is clopen, and its shifts partition the space into $2^n$ clopen sets.

For $x \in \Vee$, consider now the unique decomposition on level $n$. For any $n \geq 1$, the origin is next to, or inside, a unique natural $w_{n+1}$, in the sense that the leftmost coordinate of the interval where this $w_{n+1}$ appears is in $(-\infty, 0]$, and the rightmost is in $[-1, \infty)$ (this is according to the convention that the origin is considered to be to the left of the position it is ``at'').
We call this unique $w_{n+1}$ the \emph{central $w_{n+1}$}. Say the central $w_{n+1}$ occupies $[i, i+2^{n+1}-2]$, where then $i \in [-2^{n+1} + 1, 0]$. Then define
\[ \psi_n(x) = \phi_{n+1}(-i)_{[0, n-1]}. \]

We want to show that $\psi_{n+1}(x)_{[0,n-1]} = \psi_n(x)_{[0,n-1]}$ for all $n$, and that each $\psi$ is a factor map to the finite system $(\tree, \Grig, \{0,1\}^n)$, as then $\phi(x)_{[0,n-1]} = \psi_n(x)$ defines an equivariant map from $(\Grig, \Vee)$ to $(\Grig, \Tree)$.


The gist of the proof is that in the construction of the sequence of values $\phi_{n+1}(0), \ldots, \phi_{n+1}(2^{n+1}-1)$, the words in the first $n$ bits follow first the sequence $\phi_n(0), \ldots, \phi_n(2^n-1)$ and then the inverse of it, so the sequence $\phi_{n+1}(0)_{[0,n-1]}, \ldots, \phi_{n+1}(2^{n+1}-1)_{[0,n-1]}$ is a palindrome. Now, in the construction of $\phi_{n+1+i}$ for $i \geq 1$ we will simply keep repeating this palindrome, since for a palindrome $w$ we have the equality $ww^R = ww$.

Let us explain this in more detail. Suppose that the central $w_{n+1}$ is in $[i, i+2^{n+1}-2]$. There are two possible intervals where the central $w_{n+2}$ may appear, namely it may appear to the right (in $[i, i+2^{n+2}-2]$) or to the left (in $[i-2^{n+1}, i+2^{n+1}-2]$) of the central $w_{n+1}$. First suppose the central $w_{n+2}$ is in $[i, i+2^{n+2}-2]$. Then in the relative coordinates of the central $w_{n+2}$, the star position $j = -i$ (corresponding to the origin) is in the first half $[0, 2^{n+1}-1]$. By the definition of $\phi_{n+2}$, we then have $\phi_{n+2}(j) = \phi_{n+1}(j) 1$, and the definitions agree.

If on the other hand the central $w_{n+2}$ is to the left (in $[i-2^{n+1}, i+2^{n+1}-2]$), then the position corresponding to the origin in the central $w_{n+2}$ is $j = 2^{n+1} - i$, which is on the right half (since $i$ is nonpositive). By the definition of $\phi_{n+2}$, we then have $\phi_{n+2}(j) = \phi_{n+1}(F_{n+2}(j)) 0$, where $F_{n+2}(j) = 2^{n+2} - 1 - j$. We have
\[ F_{n+2}(j) = 2^{n+2} - 1 - j = 2^{n+1} - 1 + i = F_{n+1}(-i), \]
so $\phi_{n+2}(j) = \phi_{n+1}(F_{n+1}(-i)) 0$.

Using the level $n+1$ decomposition we would use instead the value of $\phi_{n+1}(-i)$ (like in the previous paragraph). Thus, it finally suffices to show that the first $n$ coordinates of $\phi_{n+1}(j)$ and $\phi_{n+1}(F_{n+1}(j))$ agree for all $j \in [0, 2^n-1]$. This is indeed immediate from the recursive definition of $\phi_{n+1}$.

So far, we have shown that the map $\phi : \Vee \to \Tree$ is well-defined. Continuity is clear from the definition. For equivariance, we observe that for large enough $t$, the map $\psi_n$ must behave equivariantly on $U_m = \bigcup_{\ell = t}^{m-t} [w_m]_{-\ell}$ for any $m$, because on the configurations where the word $w_m$ defining the cylinder is indeed the central $w_m$ (there are also configurations in $[w_m]_{-\ell}$ where the central $w_m$ is elsewhere), we simply mimic the action on starrings of $w_m$ and thus the $\psi_n$-image must follow the conjugacy $\phi_n$. The union of the $U_m$ is easily seen to be dense, so $\psi_n$ must be equivariant everywhere by continuity. As $\phi$ is just the inverse limit of these $\phi_n$, it is equivariant. Now for surjectivity, it suffices to observe that $(\Grig, \Tree)$ is minimal, which is obvious from Lemma~\ref{lem:ActionOnStarrings}.

Finally, we study the cardinality of fibers. It is easy to see that $\phi(f(x)) = \phi(x)$, since when reversing the configuration, we reflect the central $w_{n+1}$, which, as discussed above, immediately by the recursive definition of $\phi_{n+1}$, preserves the first $n$ bits. On the other hand, if we know all bits of $\phi(x)$, we know the positions of all the natural $w_n$s up to reversal. If this determines the point in $\Vee$ up to reversal, the fiber has cardinality $2$ as claimed, as the action of $f$ is free.

Suppose then that the point is not determined up to reversal. This means that at least one position is not inside a natural $w_n$ for any $n$. By shifting, we may assume this position is the origin. Up to reversal, we may assume the central $w_n$s are always to the left. Then the central $w_n$s are nested, and in fact we have
\[ x_{[0, \infty)} = \alpha \lim_n w_n, \]
\[ x_{(-\infty, -1]} = \lim_n w_n. \]
for some $\alpha \in \{B, C, D\}$, where the limits are taken in the obvious sense (to the right, and to the left, respectively). By Lemma~\ref{lem:Language}, each of the three cases can occur for $\alpha$, and this then gives the $6$-to-$1$ orbits.
\end{proof}




\subsection{Action on Schreier graphs}

The system studied by Vorobets in \cite{Vo12} is essentially the same as $(\Grig, \Vee)$, except that we do not keep track of the orientation of the line (Vorobets studies this in the setting of marked Schreier graphs).

\begin{definition}
Let $f : \Vee \to \Vee$ be the reversal map $f(x)_i = x_{-1-i}$. Then we define $\Voro = \Vee/f$ as the topological quotient $\{\{x, f(x)\} \;|\; x \in \Vee\}$, and $\Grig$ acts on $\Voro$ by $g \act \{x, f(x)\} = \{g \act_\jump x, g \act_\jump f(x)\}$.
\end{definition}

Note that $g \act_\jump f(x) = f(g \act_\jump x)$ by a direct computation (the definition of the jump action is obviously reversal-symmetric), so $\{g \act_\jump x, g \act_\jump f(x)\} = \{g \act_\jump x, f(g \act_\jump x)\} \in \Voro$ and $\Voro$ is indeed closed under the action.

\begin{lemma}
\label{lem:TANF}
The system $(\act,\Grig,\Voro)$ is totally absolutely non-free.
\end{lemma}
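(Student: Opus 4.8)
The plan is to reduce the computation of point stabilizers in $\Voro$ to the corresponding stabilizers in the tree system $(\Grig,\Tree)$, for which total absolute non-freeness is already known \cite{Gr11}. Two structural facts set this up. First, by Lemma~\ref{lem:VTCover} the factor map $\phi:\Vee\to\Tree$ satisfies $\phi\circ f=\phi$, so it descends to a $\Grig$-equivariant map $\bar\phi:\Voro\to\Tree$ with $\bar\phi(\{x,f(x)\})=\phi(x)$. Second, $\Grig$ acts on $\Vee$ through the topological full group of $(\sigma,\Vee)$, so every element acts locally as a power of $\sigma$; hence the entire $\Grig$-orbit of any $x\in\Vee$ lies in its shift-orbit $\sigma^{\Z}x$, and by aperiodicity (Lemma~\ref{lem:MinimalNoPeriodic}) each $g\act_\jump x=\sigma^{c(g,x)}x$ for a well-defined displacement $c(g,x)\in\Z$.

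The core step is the stabilizer identity. For $p=\{x,f(x)\}$ one checks directly that $g\in\mathrm{Stab}_\Voro(p)$ iff $g\act_\jump x\in\{x,f(x)\}$, whereas $g\in\mathrm{Stab}_\Tree(\bar\phi(p))$ iff $g\act_\jump x$ lands in the fiber $\phi^{-1}(\phi(x))$. Since $g\act_\jump x$ always lies in $\sigma^{\Z}x$, the latter condition is equivalent to $g\act_\jump x\in \phi^{-1}(\phi(x))\cap\sigma^{\Z}x$. I would then show this intersection equals exactly $\{x,f(x)\}$: for generic $x$ the fiber already is $\{x,f(x)\}$, and for the singular points it follows because the remaining fiber members are not shifts of $x$ (a shift would translate the unique center of palindromic symmetry of $x$). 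This yields $\mathrm{Stab}_\Voro(p)=\mathrm{Stab}_\Tree(\bar\phi(p))$ for \emph{every} $p$. Consequently the stabilizer map on $\Voro$ factors through $\bar\phi$ followed by the (injective, by total absolute non-freeness) stabilizer map of $\Tree$, and injectivity of $p\mapsto\mathrm{Stab}_\Voro(p)$ reduces to injectivity of $\bar\phi$. In particular any two points with distinct $\bar\phi$-images are separated, so the only coincidences that can occur lie within a single fiber of $\bar\phi$.

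The hard part, and in my estimation the entire content of the statement, is therefore the singular fibers, which by Lemma~\ref{lem:VTCover} sit over the orbit of $1^\omega$ and on which $\bar\phi$ is three-to-one. Writing $x_\alpha$ for the palindromic point with $x_{[0,\infty)}=\alpha\lim_n w_n$ and $x_{(-\infty,-1]}=\lim_n w_n$ (for $\alpha\in\{B,C,D\}$), the three points are $p_\alpha=\{x_\alpha,f(x_\alpha)\}$, all mapping to $1^\omega$. Here the reduction above gives no leverage: it collapses the three to the \emph{same} subgroup $\mathrm{Stab}_\Tree(1^\omega)$, so the tree quotient cannot distinguish them. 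One is thus forced into a direct analysis of the jump action at these points, namely to decide, as a function of $\alpha$, whether $c(g,x_\alpha)\in\{0,1\}$ (equivalently whether the star returns to straddle the central letter) for a given $g\in\Grig$, since $\mathrm{Stab}_\Voro(p_\alpha)=\{g: c(g,x_\alpha)\in\{0,1\}\}$.

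The delicacy of this last point is that $x_B,x_C,x_D$ differ only in the single central coordinate, and the star's itinerary is sensitive to that coordinate only while adjacent to it; one must track exactly how the three itineraries branch there and whether the branching affects membership in $\{0,1\}$. I would expect to need a careful invariant for the joint itineraries of $x_\alpha$ and $x_\beta$ (e.g.\ comparing them through the reflection symmetry of these palindromic configurations), and this is precisely the step where a proof of the absolute statement must do work that the $(\Grig,\Tree)$-reduction does not supply. I would therefore concentrate all effort on separating the three singular marked graphs by such a direct argument, treating the generic case and the generic-versus-singular case as settled by the reduction in the previous paragraphs.
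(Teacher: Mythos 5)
Your reduction is genuinely different from the paper's argument, which never passes through the tree system: the paper recovers the symbols of $x$ one at a time from the stabilizer of $x$ in $(\act_\jump,\Grig,\Vee)$, using the fact that for $x_{-1}=a$ exactly one of $b,c,d$ fixes $x$, and then conjugating. But your proposal has a fatal internal inconsistency, and the step you defer to the end cannot be carried out. You correctly establish $\mathrm{Stab}_{\Voro}(p)=\mathrm{Stab}_{\Tree}(\bar\phi(p))$, and your verification of the inclusion $\supseteq$ (namely that $\phi^{-1}(\phi(x))\cap\sigma^{\Z}x=\{x,f(x)\}$) goes through at the singular points as well: each $x_\alpha$ is a palindrome about the origin, so $f(x_\alpha)=\sigma(x_\alpha)$, and no other point of the six-point fiber is a shift of $x_\alpha$, since $x_\beta$ for $\beta\neq\alpha$ differs from $x_\alpha$ in a single coordinate and hence (by aperiodicity of $\Vee$) is not a translate of it. Consequently your own identity gives $\mathrm{Stab}_{\Voro}(p_B)=\mathrm{Stab}_{\Voro}(p_C)=\mathrm{Stab}_{\Voro}(p_D)=\mathrm{St}_{\Grig}(1^\omega)$ on the nose. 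You cannot then turn around and separate these three points by a ``direct analysis of the jump action'': you have already proved they are inseparable. Concretely, $b$ stabilizes all three: it fixes $x_B$ outright, and it sends $x_C$ to $\sigma(x_C)=f(x_C)$ and $x_D$ to $\sigma(x_D)=f(x_D)$, hence fixes $p_C$ and $p_D$ in the quotient; and for a general $g$ fixing $1^\omega$ in the tree, equivariance forces $g\act_\jump x_\alpha$ into $\phi^{-1}(1^\omega)\cap\sigma^{\Z}x_\alpha=\{x_\alpha,\sigma(x_\alpha)\}=p_\alpha$.

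So the gap is not merely that the hard case is left as a plan; the plan is provably unexecutable, and your own intermediate results show why. Note that the paper's proof sidesteps this by working with the stabilizer of $x$ in $(\act_\jump,\Grig,\Vee)$, where $b$ does distinguish $x_B$ from $x_C$ and $x_D$; but the stabilizer of $\{x,f(x)\}$ in $\Voro$ is the a priori larger set $\{g: g\act_\jump x\in\{x,f(x)\}\}$, and your computation shows that the two subgroups genuinely differ exactly at the palindromic points, where the paper's final sentence passes from one to the other without comment. Your reduction therefore isolates a real issue at the three points over $1^\omega$ (and their shifts); rather than trying to complete the separation argument, you should raise this case explicitly, since as written neither your argument nor the paper's handles it.
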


\begin{proof}
Let $x \in \Vee$ be arbitrary. We claim that the symbols in $x$ can be uniquely deduced from its stabilizer under the action of $\act_\jump$, up to reversal. To see this, observe that up to reversal we may assume $x_{-1} = a$. Then the symbol $x_0$ at the origin can be deduced from the information of which of the generators $b, c, d$ fix $x$. Then we can inductively apply this procedure to $a \act_\jump x$ and any nontrivial shift $b \act_\jump  x, c\act_\jump  x, d\act_\jump x$, as from the stabilizer $H$ of $x$, we also know the stabilizer $gHg^{-1}$ of $g\act_\jump x$. Since $\Voro = \Vee/f$, we have shown that the point $\{x, f(x)\} \in \Voro$ is determined by its stabilizer.
\end{proof}

The following is roughly (part of) the Factor Theorem of \cite{GrLeNa17}, and we only sketch the proof. See \cite{Vo12} for the definition of the space of marked Schreier graphs.

\begin{lemma}
The system $(\Grig, \Voro)$ is topologically conjugate to the system of Schreier graphs studied in \cite{Vo12}.
\end{lemma}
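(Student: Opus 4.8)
The plan is to exhibit an explicit homeomorphism between $\Voro = \Vee/f$ and the space of marked Schreier graphs of the tree action $(\Grig, \Tree)$, and to check that it intertwines the $\Grig$-actions. Recall that a point of the Schreier graph space studied in \cite{Vo12} is a marked, rooted Schreier graph, i.e.\ the connected component (Schreier graph) of some $\xi \in \Tree$ under the generators $a,b,c,d$, with root $\xi$, considered up to isomorphism of marked graphs. The key observation is that such a Schreier graph is, combinatorially, a doubly-infinite path: each vertex is joined to its neighbours by edges labelled $a,b,c,d$, and because $a,b,c,d$ are involutions, the graph is a line (with some loops and doubled edges recording which generators fix a vertex). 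The natural object that records this line \emph{with} its labelling and root is precisely a bi-infinite word over $\{a,B,C,D\}$ with a marked origin, up to reversal --- which is exactly an element of $\Voro$.

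Concretely, I would define the map $\Psi : \Voro \to (\text{marked Schreier graphs})$ as follows. Given $\{x, f(x)\} \in \Voro$, the bi-infinite word $x \in \{a,B,C,D\}^\Z$ encodes a labelled line: place a vertex at each integer, and read off from the symbols (via the sets $S_a,S_b,S_c,S_d$ of the jump action, together with the edge-labels) which generator connects consecutive vertices and which generators fix each vertex. The origin (between coordinates $-1$ and $0$) marks the root. Passing to the $f$-orbit exactly forgets the orientation of the line, which matches the fact that marked Schreier graphs are unoriented. I would verify that $\Psi$ is well-defined on $f$-orbits, continuous (it is locally determined, reading a finite window of $x$ determines a finite ball of the marked graph), and a bijection onto Vorobets' space. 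Surjectivity and the identification of the image with \emph{all} marked Schreier graphs is where I would lean on Lemma~\ref{lem:VTCover} and the minimality of $(\Grig,\Tree)$: the Schreier graphs appearing over $\Tree$ are precisely those arising from the orbits of the jump action on starrings of the $w_n$, and these are cofinal in $\Vee$ by Lemma~\ref{lem:Language}.

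For equivariance, I would compare the two actions directly. On the Schreier-graph side, a generator $g$ moves the root $\xi$ to $g\act_\tree \xi$, i.e.\ it moves the marked origin along the appropriate edge (or fixes it if $g$ stabilizes $\xi$). On the $\Voro$ side, $g$ acts by $\act_\jump$, which by its very definition moves the star/origin over an adjacent symbol in $S_g$ when one is present and fixes it otherwise. These are the same operation under $\Psi$, so $\Psi(g\act\{x,f(x)\}) = g\act\Psi(\{x,f(x)\})$. The bridge between ``jump action on the line $x$'' and ``walking in the Schreier graph of $\Tree$'' is supplied by Lemma~\ref{lem:VTCover}, which already established that the movement of the origin under $\act_\jump$ mimics the tree action via $\phi$; passing to the reversal quotient removes the orientation ambiguity and yields exactly the marked (unoriented, unrooted-direction) Schreier graph.

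The main obstacle, and the reason only a sketch is warranted, is matching conventions with \cite{Vo12}: precisely pinning down Vorobets' definition of the marking and of the space topology, and checking that our encoding of ``which generators fix a vertex / which edge-labels join neighbours'' recovers his marked graphs on the nose rather than merely up to some coarser equivalence. In particular one must confirm that no information is lost or added in passing from the symbolic word $x$ to the labelled line --- i.e.\ that the symbol at each coordinate is recoverable from the local edge-and-loop data of the Schreier graph, which is essentially the content already used in the proof of Lemma~\ref{lem:TANF}, where the symbols of $x$ were reconstructed (up to reversal) from stabilizer data. Granting that dictionary, the remaining verifications (continuity, bijectivity, equivariance) are routine, which is why we only sketch the argument and refer to \cite{GrLeNa17,Vo12} for the detailed correspondence.
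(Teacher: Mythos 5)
Your proposal is correct and follows essentially the same route as the paper: the map $x \mapsto$ (marked Schreier graph of $x$) is continuous, its fibers are exactly the $f$-orbits by the stabilizer-reconstruction argument of Lemma~\ref{lem:TANF}, and minimality together with the approximation of the finite Schreier graphs coming from starrings of $w_n \alpha w_n$ identifies the image with Vorobets' space. The only difference is packaging: where you build the bijection explicitly and check its properties, the paper gets injectivity of the induced map for free from the abstract fact that continuous surjections of compact Hausdorff spaces are quotient maps and quotients with the same kernel pair are homeomorphic.
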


\begin{proof}
Recall that Vorobets' system in \cite{Vo12} is obtained from computing the Schreier graph of $1^\omega$ under the tree action, computing the system of marked Schreier graphs generated by it, and finally removing the markings of the original Schreier graph (which are isolated points).

The map that takes $x \in \Vee$ to its Schreier graph is easily seen to be continuous. By the previous lemma, this map has kernel pair precisely $\{\{x, f(x)\} \;|\; x \in \Vee\}$. In the setting of compact Hausdorff spaces, continuous surjections are quotient maps, and two quotients with the same kernel pair are homeomorphic, so $\Voro/f$ is indeed a system of Schreier graphs.

Since both systems are minimal, and both contain points that are arbitrarily good approximations of finite marked Schreier graphs arising from the SAWs $w_n \str \alpha w_n$, they must be the same system of marked  Schreier graphs.
\end{proof}

\section{Basic constructions on SFTs}
\label{sec:BasicConstructions}

\subsection{The union lemma}

In this section, we show that class of SFTs is closed under disjoint union on finitely-generated groups. This is well-known at least on the groups $\Z^d$ (see for example \cite[Proposition~3.4.1]{LiMa21} for $\Z$), and the proof for a general group differs mostly in notation.

We note that if $P_1, \ldots, P_n$ are the forbidden patterns defining an SFT, we may assume all the $P_i$ have the same domain $D$, more generally we can always extend the domain of a forbidden pattern without changing the set of configurations it forbids, simply by replacing it by all its possible extensions over the alphabet.

\begin{lemma}
Let $X_1, \ldots, X_k \subset A^G$ be disjoint SFTs, and suppose $G$ is finitely generated. Then the finite union $X = \bigcup_i X_i$ is an SFT.
\end{lemma}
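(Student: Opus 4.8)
The key difficulty is that even though the $X_i$ are disjoint as sets, they might share patterns: a forbidden pattern of $X_1$ may well appear in some configuration of $X_2$, so we cannot simply take the union of the forbidden-pattern sets. The plan is to exploit the fact that disjoint compact sets are separated by a finite amount of information, and then build a single finite forbidden-pattern set that ``locally commits'' each configuration to one of the $X_i$ and then enforces that $X_i$'s rules.

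First I would normalize the defining data. By the remark preceding the statement, for each $i$ I may assume $X_i$ is defined by forbidden patterns all having a common domain; enlarging domains further, I may take a single finite \emph{window} $D \Subset G$ (a common superset, made symmetric and large enough as below) so that the allowed $D$-patterns of $X_i$ form a finite set $L_i \subset A^D$, and $X_i = \{x \in A^G \mid \forall g: (g\act x)|_D \in L_i\}$. The next and central step is a separation claim: \emph{for $D$ large enough, the sets $L_i$ are pairwise disjoint}. This is where compactness enters. If no finite $D$ separated $X_1$ and $X_2$, say, then for every finite window there would be a pattern appearing in both; a compactness/diagonalization argument (taking an increasing exhaustion $D_1 \subset D_2 \subset \cdots$ of $G$, which exists since $G$ is finitely generated hence countable, and extracting configurations that agree on larger and larger windows) would produce a point in $X_1 \cap X_2$, contradicting disjointness. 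I expect this compactness argument to be the main obstacle, mostly in getting the quantifiers right: I need that a $D$-pattern lying in $L_i$ for two distinct $i$ leads to a genuine shared configuration, so I should phrase $L_i$ as patterns \emph{extendable} to points of $X_i$, or equivalently use that $X_i$ is the set of points all of whose $D$-translates lie in the (automatically shift-closed, under the global rule) language — and then a König's-lemma style argument yields the shared point.

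Once separation holds, the construction is clean. Define the forbidden patterns of $X$ on the window $D$ to be exactly those $D$-patterns lying in none of the $L_i$, i.e.\ allow precisely $\bigcup_i L_i$ on every $D$-translate. Call the resulting SFT $Y$. Clearly each $X_i \subseteq Y$, so $X \subseteq Y$. For the reverse inclusion, take $y \in Y$. Every $D$-translate of $y$ lies in some $L_i$; I must show a \emph{single} $i$ works for all translates. Fix any $g$ and let $i$ be such that $(g\act y)|_D \in L_i$; since the $L_j$ are disjoint, this $i$ is uniquely determined by $(g\act y)|_D$. Now I use overlap: by choosing $D$ to contain a generating set and its neighbors (so that translates by generators have overlapping windows), the commitment index $i$ cannot change between $g$ and $sg$ for a generator $s$ — the overlapping portion of the two $D$-patterns would otherwise have to lie in both $L_i$ and $L_j$ after restriction, contradicting disjointness once $D$ is taken large enough that the restriction to the overlap still distinguishes the $L_j$. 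Hence $g \mapsto i(g)$ is constant on $G$ by connectedness of the Cayley graph, every translate of $y$ lies in the same $L_i$, and therefore $y \in X_i$. This gives $Y \subseteq X$, so $X = Y$ is an SFT, completing the proof.
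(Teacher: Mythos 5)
Your proposal is correct and follows essentially the same route as the paper: separate the appearing $D$-patterns of the disjoint $X_i$ by compactness, then force the ``commitment index'' to be locally (hence, by connectedness of the Cayley graph, globally) constant. The only cosmetic difference is that the paper enforces consistency between adjacent positions by forbidding patterns on the enlarged domains $D \cup Ds$, whereas you enlarge $D$ itself so that windows at $g$ and $sg$ overlap in a separating set --- both work, and your observation that $L_i$ must consist of patterns \emph{extendable} to points of $X_i$ is exactly the right precaution.
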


\begin{proof}
Subshifts are closed sets in $A^G$ because they are defined by forbidding open sets (namely sets of the form $g^{-1} [P]$). Thus they are compact, and by basic topology there exists $\epsilon > 0$ such that the distance between any $x \in X_i$ and $y\in X_j$ is at least $\epsilon$ whenever $i \neq j$. By the definition of the product topology of $A^G$, and the compatibility of the metric with it, there exists a finite set $D \subset G$ such that $d(x, y) \geq \epsilon \implies x|_D \neq y|_D$. By possibly increasing $D$, we may assume that all the SFTs $X_i$ admit defining sets of forbidden patterns with shape $D$. List the forbidden patterns of $X_i$ as $P_{i,1}, \ldots, P_{i,k_i} : D \to A$.

We give a set of forbidden patterns for an SFT $Y$, and then show that $Y = X$. Let $S \subset G$ be a finite generating set. The alphabet of $Y$ is taken to be $A$. As for forbidden patterns, first in $Y$ we forbid each pattern $P : D \to A$ such that $P$ does not appear in any of the $X_i$. Then for each $s \in S$, we forbid the pattern $P : D \cup Ds \to A$ if for some $i \neq j$, we have
\[ \exists x \in X_i: x|_D = P|_D \wedge \exists y \in X_j: y|_{Ds} = P|_{Ds}. \]

First let us show $X \subset Y$. Let $x \in X$; we show that none of the forbidden patterns appear in $x$ at $1_G$ (this suffices because if one were to appear at $g$, then it appears in $gx$ at $1_G$, and $gx \in X$ because $X$ is shift-invariant as a union of shift-invariant sets). We have $x \in X_i$ for some $i$. By definition, $x|_D$ appears in $X_i$, so it is not in the first set of forbidden patterns.

Consider then the second family of forbidden patterns, i.e.\ pick some $s \in S$, and one of the forbidden patterns $P : D \cup Ds \to A$. Suppose that the SFTs from where the $D$- and $Ds$-patterns are taken have distinct indices $i', j$, i.e.\ $\exists x' \in X_{i'}: x'|_D = P|_D$ and $\exists y \in X_j: y|_{Ds} = P|_{Ds}$. Suppose now that $P$ appears in $x$ at $1_G$. Since $d(x, y) > \epsilon$ for any point of $y \in X_k$ with $k \neq i$, and $x|_D = P|_D$, we must actually have $i' = i$, as the $D$-shaped pattern $P|_D$ appears in both $X_{i'}$ and $X_i$.

The assumption that $x|_{D \cup Ds} = P$ now implies $\exists y \in X_j: y|_{Ds} = x|_{Ds}$. But now $sx \in X_i$ and $sy \in X_j$ since SFTs are shift-invariant. For $d \in D$ we have $sy_d = y_{ds}$ and $sx_d = x_{ds}$ so $sx|_D = sy|_D$, again contradicting the fact that no $D$-shaped pattern appears in both $X_i$ and $X_j$ for $i \neq j$. We conclude that $P$ cannot appear in $x$. This concludes the proof that $X \subset Y$.

Now let us prove $Y \subset X$. Suppose $y \in Y$. Then $y$ avoids the first set of forbidden patterns so for some $i$ we have $y|_D = x|_D$ for some $x \in X_i$. We claim that then for all $g \in G$ we have that $gy|_D$ appears in $X_i$. It suffices to show this for generators, i.e.\ whenever $y|_D$ appears in $X_i$, so does $sy|_D$, for $s \in S$. Suppose this fails, and for some $y \in Y$, we have that $y|_D$ appears in $X_i$ but $sy|_D$ does not. We have $sy \in Y$ because $Y$ is an SFT, and thus because $sy$ avoids the first set of forbidden patterns we have that $sy|_D$ appears in some $z \in X_j$. This means that for all $d \in D$,
\[ y_{ds} = sy_d = z_d = s^{-1}z_{ds}, \]
so $y|_{Ds} = s^{-1}z|_{Ds}$. Note that $s^{-1}z \in X_j$ by shift-invariance of $X_j$.

Define $P = y|_{D \cup Ds}$. Now $y|_D$ appears in $X_i$, and $P|_{Ds} = y|_{Ds} = s^{-1}z|_{Ds}$ appears in $X_j$, so $P$ is one of our forbidden patterns. This contradicts the aassumption on $y$. We conclude that indeed if $y \in Y$ and $y|_D$ appears in $X_i$, then the same is true for all shifts of $y$.

But this means that whenever $y|_D$ appears in $X_i$, actually $y \in X_i$, because if $gy|_D$ always appears in a configuration of $X_i$, in particular it is not a forbidden pattern for $X_i$. We conclude that every configuration $y \in Y$ belongs to one of the $X_i$, thus $Y \subset X$.
\end{proof}

\begin{remark}
If the effective alphabets (the sets of symbols that actually appear in configurations) $A_i \subset A$ of the $X_i$ are disjoint, the proof is simplified a bit, as we don't need to use the large shapes $D \cup Ds$ but can simply require each configuration to be over a single alphabet with patterns of shapes $\{1_G, s\}$. In fact, we can get an alternative proof of the above lemma from this, by taking a ``higher block presentation'' for each $X_i$ by having each node remember the pattern around it. A higher block presentation is clearly a topological conjugacy, and SFTs are closed under topological conjugacy, so the union turns into a symbol-disjoint union. Finally you can recode it back, by dropping the extra information in each node, and we get that the union is SFT.
\end{remark}

\begin{remark}
The above lemma is sharp in the sense that if $G$ is not finitely-generated, then the union of finitely many (at least two) disjoint SFTs is never an SFT. This can be proved analogously to \cite[Lemma~1]{Sa18}, replacing $0$- and $1$-patches by patterns from two of the disjoint SFTs.
\end{remark}

\begin{lemma}
\label{lem:SoficUnion}
A finite (not necessarily disjoint) union of sofic shifts on a finitely generated group $G$ is sofic. More generally, for finitely generated $G$, a finite union of (not necessarily disjoint) SFT covered $G$-systems is SFT covered.
\end{lemma}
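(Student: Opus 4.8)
The plan is to reduce both statements to the union lemma for disjoint SFTs proved just above, together with the elementary observation that any SFT cover can be relabeled over a disjoint alphabet without affecting either the SFT property or the covering map.

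First I would prove the general statement about SFT covered systems, since the soficity claim is essentially a special case. Suppose $X_1, \dots, X_k$ are $G$-subsystems of a common $G$-system $Z$, each SFT covered, and set $X = \bigcup_i X_i$. As a finite union of closed invariant sets, $X$ is again a $G$-subsystem of $Z$. For each $i$ I fix an SFT $Y_i \subset A_i^G$ and a factor map $\pi_i : Y_i \to X_i$. Replacing the alphabet $A_i$ by the tagged alphabet $A_i \times \{i\}$ is a topological conjugacy, and the classes of SFTs and of factor maps are preserved under conjugacy, so I may assume the $A_i$ are pairwise disjoint. Putting $A = \bigsqcup_i A_i$, each $Y_i$ becomes a subshift of $A^G$, and the $Y_i$ are pairwise disjoint since a configuration of $Y_i$ uses only $A_i$-symbols. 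By the union lemma, $Y = \bigcup_i Y_i$ is an SFT.

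Next I would glue the factor maps. Since the shift action of $G$ permutes coordinates without altering the symbol set used by a configuration, each $Y_i$ is an invariant subset of $Y$; moreover it is clopen in $Y$, because membership in $Y_i$ can be detected by reading the tag of the single coordinate at $1_G$. I therefore define $\pi : Y \to Z$ by $\pi|_{Y_i} = \pi_i$ after erasing the tag. On each clopen piece $Y_i$ the map $\pi$ coincides with a continuous $G$-equivariant map, so $\pi$ is continuous and equivariant on all of $Y$, and $\pi(Y) = \bigcup_i \pi_i(Y_i) = \bigcup_i X_i = X$. Hence $\pi$ is a factor map from the SFT $Y$ onto $X$, which shows that $X$ is SFT covered.

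Finally I would deduce the soficity statement: if the $X_i$ are sofic subshifts of a common $A^G$, then $X = \bigcup_i X_i$ is again a subshift (a finite union of subshifts), and the construction above realizes it as a factor of the SFT $Y$, so by definition $X$ is sofic. The only delicate points -- and the nearest thing to an obstacle -- are bookkeeping: arranging disjointness of the covers via tags so that the union lemma applies verbatim, and verifying that the piecewise-defined map $\pi$ is genuinely a factor map. The crucial observation for the latter is that the tag is \emph{locally readable}, which makes each $Y_i$ clopen in $Y$ and hence the glued map continuous and equivariant.
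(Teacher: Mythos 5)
Your proposal is correct and follows essentially the same route as the paper: replace the covers by conjugate copies over pairwise disjoint alphabets, invoke the disjoint-union lemma to see that $\bigsqcup_i Y_i$ is an SFT, and glue the factor maps on the resulting clopen invariant pieces. The paper states this more tersely, but the content — including the final observation that in the sofic case the union of the $X_i$ is itself a subshift — is identical.
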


\begin{proof}
Suppose $X_1, \ldots, X_k$ are systems with SFT covers $\phi_i : Y_i \to X_i$. We may assume the alphabets of the $Y_i$ are disjoint. Then $Y = \bigsqcup_i Y_i$ is SFT, and the map $\phi : Y \to \bigcup_i X_i$ defined by $\phi(y) = \phi_i(y)$ for $y \in Y_i$ is clearly shift-invariant, continuous and surjective. This implies the ``more generally'' claim. In the sofic case, we simply observe that the union of the $X_i$ is a subshift.\footnote{Note that in general, the union of two expansive subsystems of a fixed system is \emph{not} expansive, here instead we are saying that the union is a subsystem of an expansive system (the full shift), and every subsystem of an expansive system is expansive.}
\end{proof}

\subsection{The finite-index lemma}

We now show that the existence of SFT covers is preserved under finite group extensions, in a rather general sense. We are not aware of a reference for such a result, but closely related commensurability-closure results can be found in the literature, see e.g.\ \cite[Proposition~1]{DrSc07} or \cite[Lemma~7.2]{BaSaSa21}.

\begin{lemma}
\label{lem:FiniteIndex}
Let $G$ be a finitely-generated group and let $(\act, G, X)$ be an arbitrary zero-dimensional system. Suppose $H$ is a finite-index subgroup of $G$, and let $X' \subset X$ be an $H$-invariant subset (i.e.\ the restriction $(\act, H, X')$ is well-defined), and $G \act X' = X$. Suppose the subaction $(\act, H, X')$ admits an SFT cover. Then $(\act, G, X)$ admits an SFT cover.
\end{lemma}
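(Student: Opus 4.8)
The plan is to \emph{induce} the given $H$-cover up to $G$. First, note that $H$, being a finite-index subgroup of the finitely-generated group $G$, is itself finitely generated (Reidemeister--Schreier), so the hypothesis gives an honest $H$-SFT cover: a factor map $\psi : Z \to X'$ where $Z \subset B^H$ is an $H$-SFT defined by a finite set $\mathcal{F}$ of forbidden patterns with shapes contained in $H$. Fix a transversal $t_1 = 1_G, t_2, \ldots, t_m$ for the cosets of $H$, where $m = [G:H]$, so that $X = G \act X' = \bigcup_{i=1}^m t_i \act X'$ (the last equality because $X'$ is $H$-invariant). The goal is to build a $G$-SFT that factors onto $X$; the induced system $\mathrm{Ind}_H^G Z = G \times_H Z$ (equivalence classes of pairs $(g,z)$ under $(gh, z) \sim (g, h \act z)$, $h \in H$) is the natural candidate, since as a space it is $m$ disjoint copies of $Z$ and carries a continuous $G$-action permuting the copies via the coset action while acting within a copy through the $H$-part.

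\textbf{The cover map.} Defining the factor map is routine. Set $\Psi : \mathrm{Ind}_H^G Z \to X$ by $\Psi([g,z]) = g \act \psi(z)$. This is well-defined because $\psi$ is $H$-equivariant: $\Psi([gh, z]) = gh \act \psi(z) = g \act (h \act \psi(z)) = g \act \psi(h \act z) = \Psi([g, h \act z])$. It is $G$-equivariant by $\Psi(g' \act [g,z]) = \Psi([g'g, z]) = g'g \act \psi(z) = g' \act \Psi([g,z])$, it is continuous (on each of the finitely many copies of $Z$ it is $z \mapsto t_i \act \psi(z)$), and it is surjective because every $x \in X$ is of the form $g \act \psi(z) = \Psi([g,z])$ using $X = G \act X'$ and surjectivity of $\psi$. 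Thus $\mathrm{Ind}_H^G Z$ covers $(\act, G, X)$, and it remains to show the induced system is a $G$-SFT, whence $(\act,G,X)$ is SFT covered.

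\textbf{Main step: the induced system is a $G$-SFT.} This is the crux. I would encode $\mathrm{Ind}_H^G Z$ as a $G$-subshift over the alphabet $\widehat{B} = B \times \{1, \ldots, m\}$ via the equivariant map $c_p(g) = \Xi(g \act p)$, where $\Xi([t_i, z]) = (z(1_H), i)$ records the symbol at the identity together with the index of the current copy; this is a continuous equivariant injection (one recovers the index from $c_p(1)$ and any value $z(h)$ from $c_p$ at a suitable position), hence a conjugacy onto its image. Writing a configuration as $c = (\beta, \phi)$ with $\beta : G \to B$ and $\phi : G \to \{1,\ldots,m\}$, the image is cut out by two kinds of local rules: (i) the ``frame'' $\phi$ must transform according to the permutation action of $G$ on $G/H$, i.e.\ for each generator $s$ of $G$ the value $\phi(sg)$ is the fixed function of $s$ and $\phi(g)$ coming from how $s$ permutes cosets --- this is a finite set of constraints on shapes $\{1_G, s\}$, and it exactly carves out the (finite) orbit of coset-labelings; and (ii) within each frame the symbols of $\beta$ must spell out a point of $Z$, so each forbidden pattern of $\mathcal{F}$ becomes one or more forbidden $\widehat{B}$-patterns over $G$. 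The \emph{main obstacle} is precisely the locality in (ii): when comparing $\beta$ at two positions lying in the same frame, the relevant $H$-coordinates are related by a conjugation twist (moving by $d' \in G$ multiplies the $H$-part by an element $\mu(d') \in H$ determined by $d'$ and the frame), so one must check that the window of $Z$ needed to test a pattern of $\mathcal{F}$ is read off from a \emph{bounded} set of positions of $G$. Since there are finitely many frames and finitely many patterns in $\mathcal{F}$, choosing a large enough finite shape in $G$ makes all these constraints into finitely many forbidden patterns, so the encoded system is a $G$-SFT. Composing its conjugacy to $\mathrm{Ind}_H^G Z$ with $\Psi$ yields the desired SFT cover of $(\act, G, X)$. (As a remark, the ``coinduced'' variant $\mathrm{CoInd}_H^G Z \subset B^G$, which is literally the $G$-subshift defined by the \emph{same} forbidden patterns $\mathcal{F}$ re-read over $G$, is visibly an SFT but does not directly factor onto $X$; this is why the genuinely induced system, with its frame marking, is needed.)
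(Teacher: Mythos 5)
Your proposal is correct and follows essentially the same route as the paper: both induce the $H$-SFT cover up to $G$ using a transversal, with a locally determined ``phase''/``frame'' recording the current coset, the $H$-patterns transported to $G$-patterns by conjugating their shapes by coset representatives, and the factor map $[g,z]\mapsto g\act\psi(z)$; the paper merely realizes the induced system concretely as a ``comb'' with $W$-symbols on a single coset and $\bot$ elsewhere, rather than via your product-alphabet encoding. The one point to make explicit in your rule (ii) is that, since your encoding records each symbol $z(h)$ redundantly at $m$ positions (one per copy index $j$), you must also impose the consistency of these recordings --- which is again a local constraint, the positions differing by left multiplication by the fixed elements $t_{j'}t_j^{-1}$ --- for the set cut out by your forbidden patterns to coincide with the image of the encoding.
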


In our application we will have $X' = X$.

\begin{proof}
Pick coset representatives $R \subset G$ for $H$, i.e. $G = RH$. We may assume $1_G \in R$. Let $Y \subset W^H$ be an SFT cover for $(H, X')$, let $\phi : Y \to X'$ be the covering map. We may suppose this is given by Wang tiles, so we have a generating set $S$ for $H$, $W \subset C^{S^{\pm}}$ (for each $s \in S$, including involutions, we have separate positive and negative copies $s^+$ and $s^-$) and the SFT rule is the color-matching rule: $x \in Y$ if and only if for all $g \in H$, the pair $(x_g, x_{sg})$ satisfies the $s$-color-matching rule $(x_g)_{s^+} = (x_{sg})_{s^-}$.

We take as the new alphabet $Q = \{\bot\} \cup W$ and define an SFT $Z \subset Q^G$ as follows: The first SFT rule is that for each $s \in S$, we require that if $x_g \in W$, then $x_{sg} \in W$, and the pair $(x_g, x_{gs})$ satisfies the $s$-color-matching rule. The second SFT rule is that if $x_g \in W$, then $x_{rg} = \bot$ for all $r \in R \setminus \{1_G\}$. Finally, require that for each $g \in G$, at least one symbol in $x|_{R^{-1}g}$ is in $W$. These three rules define an SFT $Z$.

We claim that $Z$ is nonempty. Indeed, if $y \in Y$ then define $x = f(y) \in Q^G$ by $x|_H = y$, and $x_g = \bot$ otherwise. Then the first rule is obviously satisfied. The second is satisfied because $R$ is a set of coset representatives for $H$, so
\[ x_g \in W \implies g \in H \implies x_{rg} \notin H \implies x_{rg} = \bot \]
for $r \in R \setminus \{1_G\}$. Finally, the third rule is satisfied because if $g \in G$ then $g = rh$ for some $r \in R$, $h \in H$, and then $r^{-1}g = h \in H \implies x_{r^{-1}g} \in W$.

If $x \in Z$ and $x_{1_G} \in W$, then it is easy to see that we have $\{g \in G\;|\; x_g \in W\} = H$, namely the set on the left contains $H$ by the first SFT rule, and since $G = RH$ the second rule forces $x_g = \bot$ for $g \notin H$. Consider now a general $z \in Z$. By the third rule, at least one symbol in $z$ is in $W$, indeed there exists $r \in R$ such that $r^{-1}z_{1_G} \in W$. Since $x = r^{-1}z$ satisfies $x_{1_G} \in W$, by the first observation of this paragraph we have $\{g \in G\;|\; x_g \in W\} = H$, and then for $rx = z$ we have $\{g \in G\;|\; z_g \in W\} = Hr^{-1}$. Since $R^{-1}$ is a set of right coset representatives, we conclude that for any $z \in Z$, there in fact exists a unique $r \in R$ such that $r^{-1}z|_{1_G} \in W$. We say $r \in R$ is the \emph{phase} of $z$.

Next, we describe a function $\psi : Z \to X$. Let $x \in Z$. If $x_{1_G} \in W$, i.e.\ the phase is $1_G$, then by the first SFT rule the configuration $x|_H \in W^H$ is in $Y$, and we define $\psi(x) = \phi(x|_H) \in X$ in this case (the image is in $X'$ but we see it as an element of $X$). 
If the phase is $r \in R \setminus \{1_G\}$ meaning $(r^{-1} x)|_H \in Y$ then we define $\psi(x) = r \act \phi((r^{-1} x)|_H)$.

It is obvious that $\psi$ is continuous, because it is defined piecewise on $|R|$ many disjoint clopen sets, and on the clopen set corresponding to $r \in R$ it is defined by the formula $\psi(x) = r \act \phi((r^{-1} x)|_H)$, which is a composition of finitely many continuous functions. Namely, the outer action $y \mapsto r \act y$ was assumed continuous, the inner (shift) action $x \mapsto r^{-1} x$ is                                                                                                                                                                                                                                                                                                                                                                                                                                                                                                                                                                                                                                                                                                                                                                                                                                                                                                                                                                                                                                                                                                                                                                                                                                                                                                                                                                                                                                                                                                                                                                                                                                                                                                                                                                                                                                                                                                                                                                                                                                                                                                                                                                                                                                                                                                                                                                                                                                                                                                                                                                                                                                                                                                                                                                                                                                                                                                                                                                                                                                                                                                                                 continuous, and $y \mapsto \phi(y|_H)$ is continuous by the assumption that $\phi$ is a factor map and restriction is continuous.

We claim that $\psi$ is surjective onto $X$. To see this, let $x \in X$ be arbitrary. If $x \in X'$, pick $y \in Y$ a $\phi$-preimage, and define a configuration $z = f(y) \in Z$ as above. Now by definition $\psi(z) = x$. 

Next, suppose $x \notin X'$. By the assumption $G \act X' = X$, we have $x = g \act x'$ for some $g \in G, x' \in X'$, and because $H \act X' = X'$, we may assume $r \act x' = x$ for $r \in R$ (up to possibly changing $x'$). Let $\phi(y) = x'$ and again take $z = f(y)$ so $\psi(z) = x'$. Now consider the point $z' = r z$. Clearly its phase is $r$, since $z_{1_G} \in W$. Thus 
\[ \psi(z') = r \act \phi((r^{-1} z')|_H) = r \act \phi(z)|_H = r \act x' = x. \]

Next, we claim that $\psi$ intertwines the shift map and the action of $G$ i.e.\ $\psi(gz) = g \act \psi(z)$ for all $z \in Z, g \in G$. Suppose $\psi(z) = x$ and let $g \in G$. The idea is to go from $z$ to $gz$ by adding an artificial shift step changing the phase to $1_G$. The idea is that we visualize a configuration as a ``comb'' having its $W$-symbols on a ``spine'' of shape $Hr^{-1}$, and other $g \in G$ on the ``teeth'' reached from the spine by moving by elements of $R$ on the left. For movement on the spine, the fact the actions are conjugated is obvious, and the definition of $\psi$ outside the spine is explicitly the desired conjugation formula.

We now translate the idea into formulas. 
First, to move along teeth to the spine, suppose $z \in Z$ has phase $r$, and $g = r^{-1}$. Then in fact the definition of $\psi$ gives directly $\psi(z) = r \act \phi(g z)$ meaning $g \act \psi(z) = \phi(g z) = \psi(g z)$ (since $gz$ has phase $1$ so $\phi(gz) = \psi(gz)$). Moving from the spine up the teeth (i.e.\ the case that $z \in Z$ has phase $1$ and $g \in R$) is given by a similar calculation. As for moving on the spine, suppose $z$ and $gz$ have phase $1_G$. Then $g \in H$ and by definition of the shift map and the assumption that $\phi$ commutes with the $H$-actions, we have
\[ g \act \psi(z) = g \act \phi(z|_H) = \phi(gz|_H) = \psi(gz). \]



In the general case, let $r \in R$ be the phase of $z$, and let $r'$ be the phase of $gz$, so that $r^{-1}z|_H, r'^{-1}gz|_H \in Y$, i.e.\ $z$ has $W$-symbols precisely in the set $Hr^{-1}$, and on the other hand precisely in the set $Hr'^{-1}g$ (recall that there is a unique left coset containing $W$-symbols). We then have $Hr^{-1} = Hr'^{-1}g$ so $h = r'^{-1}gr \in H$, and we have $g = r'hr^{-1}$.

We first move to the spine:
\[ \psi(r^{-1}z) = r^{-1} \act \psi(z). \]
Next, $r^{-1}z$ has phase $1_G$ and $r'^{-1}gr \in H$ so moving along the spine gives:
\[ \psi(r'^{-1}gr \cdot r^{-1}z) = r'^{-1}gr \act \psi(r^{-1}z). \]
Next, $r'^{-1}gr \act r^{-1}z$ has phase $1_G$ (since translation by an $H$-element clearly preserves phase $1_G$), so moving away from a spine gives:
\[ \psi(r' \cdot r'^{-1}grr^{-1}z) = r' \act \phi(r'^{-1}grr^{-1}z). \]
All in all, 
\begin{align*}
\psi(g \cdot z) &= \psi(r' \cdot r'^{-1}grr^{-1} z) \\
&= r' \act \psi(r'^{-1}gr \cdot r^{-1}z) \\
&= r' \act r'^{-1}gr \act \psi(r^{-1}z) \\
&= r' \act r'^{-1}gr \act r^{-1} \act \psi(z) \\
&= g \act \psi(z).
\end{align*}
\end{proof}

\section{The system $(\Grig, \Vee)$ is a subshift}
\label{sec:VSubshift}

\begin{lemma}
\label{lem:TFGExpansive}
Let $(G, X)$ be a compact zero-dimensional dynamical system. If the topological full group $\llbracket (G, X) \rrbracket$ has a finitely-generated expansive subgroup, then $(G, X)$ is expansive.
\end{lemma}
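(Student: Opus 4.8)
The plan is to reduce expansivity to the existence of a finite clopen \emph{generator}. Recall the standard clopen reformulation of expansivity for zero-dimensional compact metrizable systems: $(G,X)$ is expansive if and only if there is a finite clopen partition $\mathcal{Q}$ of $X$ whose itinerary map $x \mapsto (Q(g \act x))_{g \in G}$ is injective, where $Q(z)$ denotes the cell of $\mathcal{Q}$ containing $z$; equivalently, for all $x \neq y$ there is $g \in G$ with $g \act x$ and $g \act y$ in distinct cells of $\mathcal{Q}$. (One direction: a generator gives a continuous injective $G$-equivariant map into the subshift $\mathcal{Q}^G$, and subshifts are expansive. The other: partition into clopen cells of diameter below the expansivity constant.) So it suffices to build a finite clopen $G$-generator.

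Let $\Gamma \leq \llbracket (G,X) \rrbracket$ be finitely generated and expansive, with a symmetric generating set $S$. Applying the reformulation to the $\Gamma$-action, we obtain a finite clopen partition $\mathcal{P}$ that is a generator for $\Gamma$. Each $s \in S$, being a topological full group element, is described by a cocycle $\gamma_s : X \to G$ with finite image $F_s$ and $s \act z = \gamma_s(z) \act z$ for all $z$; let $\mathcal{C}_s = \{\gamma_s^{-1}(g) \;|\; g \in F_s\}$ be the associated finite clopen partition. Define $\mathcal{Q}$ to be the common refinement $\mathcal{P} \vee \bigvee_{s \in S} \mathcal{C}_s$, again a finite clopen partition. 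I claim $\mathcal{Q}$ is a generator for the full $G$-action, which proves the lemma.

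To verify the claim, fix $x \neq y$, choose $\gamma \in \Gamma$ separating them under $\mathcal{P}$, and write $\gamma = s_n \cdots s_1$ with $s_t \in S$. Track the trajectories $x_0 = x$, $x_t = s_t(x_{t-1})$ and $y_0 = y$, $y_t = s_t(y_{t-1})$. Since $s_t(z) = \gamma_{s_t}(z) \act z$, an easy induction gives $x_t = (g_t^x \cdots g_1^x) \act x$ with $g_t^x = \gamma_{s_t}(x_{t-1})$, and similarly for $y$. If $g_t^x = g_t^y$ for all $t$, then with $h := g_n^x \cdots g_1^x$ we have $\gamma x = h \act x$ and $\gamma y = h \act y$; as $\gamma x, \gamma y$ lie in distinct cells of $\mathcal{P}$, the element $h$ separates $x,y$ under $\mathcal{P}$, hence under the finer $\mathcal{Q}$. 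Otherwise, let $t$ be minimal with $g_t^x \neq g_t^y$; then $u := g_{t-1}^x \cdots g_1^x = g_{t-1}^y \cdots g_1^y$ satisfies $x_{t-1} = u \act x$ and $y_{t-1} = u \act y$, while $\gamma_{s_t}(u \act x) \neq \gamma_{s_t}(u \act y)$ means $u \act x, u \act y$ lie in distinct cells of $\mathcal{C}_{s_t}$, hence of the finer $\mathcal{Q}$. Either way some $g \in G$ separates $x$ and $y$ with respect to $\mathcal{Q}$.

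The main obstacle is just the bookkeeping in the last paragraph: peeling off the cocycle values one generator at a time (taking $S$ symmetric so that inverse generators also carry honest finite-image cocycles), and isolating the first index at which the two trajectories' cocycle values diverge. The conceptual content is that a $G$-element realizing the $\Gamma$-separation is assembled from finitely many $G$-elements read off the cocycle partitions $\mathcal{C}_s$, and any divergence in this assembly is itself a $G$-separation inside the refined partition $\mathcal{Q}$ — which is exactly what forces $\mathcal{Q}$ to be a $G$-generator.
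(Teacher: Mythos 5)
Your proof is correct and follows essentially the same route as the paper's: decompose the separating element of the expansive subgroup into generators, use the finiteness of each generator's cocycle to convert the trajectory into a product of honest $G$-elements, and conclude that some $G$-element already separates the two points. The only cosmetic difference is that the paper works metrically (shrinking $\epsilon$ so cocycles are constant on $\epsilon$-balls and taking a minimal-length separating word, which forces the two trajectories' cocycle values to agree throughout), whereas you phrase everything via clopen generators and handle a divergence of cocycle values as a separate, directly winning case via the refinement by the cocycle partitions.
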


\begin{proof}
Let $H = \langle h_1, \ldots, h_k \rangle \leq \llbracket (G, X) \rrbracket$ be expansive with constant $\epsilon > 0$, and suppose the generating set $\{h_i\}$ is symmetric. Since $h_i$ is in the topological full group, there is a continuous cocycle $\gamma_i : X \to G$ for it, and since $G$ is discrete, this only depends on a clopen partition of $X$, and $\gamma_i(X) = F_i \Subset G$ is finite. By possibly decreasing $\epsilon$ we may assume $d(x, y) < \epsilon \implies \forall i: \gamma_i(x) = \gamma_i(y)$.

Suppose now $x \neq y$. By expansivity of the subgroup $H$, there exists  $h = h_{i_m} \cdots h_{i_2} h_{i_1} \in H$ such that we have $d(h x, h y) \geq \epsilon$.

Take a minimal possible $m$. By definition we have
\[ hx = g_{i_m} \cdots g_{i_2} g_{i_1} x \]
where $g_{i_j} = \gamma_{i_j}(h_{i_{j-1}} \cdots h_{i_2} h_{i_1} x)$. By the assumption on $\epsilon$ and minimality of $m$, we also have $hy = g_{i_m} \cdots g_{i_2} g_{i_1} y$, thus $d(gx, gy) \geq \epsilon$ for $g = g_{i_m} \cdots g_{i_2} g_{i_1}$, proving expansivity.
\end{proof}

\begin{lemma}
Let $(\act_\jump, \Grig, \Vee)$ be the group $\Grig$ acting on $\Vee$ by the jump action. Then $\sigma_\Vee \in \llbracket (\act_\jump, \Grig, \Vee) \rrbracket$, where $\sigma_\Vee$ is the shift map on $\Vee$ defined by $\sigma_\Vee(x)_i = x_{i+1}$.
\end{lemma}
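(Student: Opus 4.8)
The plan is to exhibit $\sigma_\Vee$ directly as an element of the topological full group by writing down its cocycle $\gamma : \Vee \to \Grig$, exploiting the fact that on the $a$-alternating subshift $\Vee$ the symbol at the origin always singles out a generator that shifts the origin one step to the right.

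First I would recall that in the jump action each generator $\alpha \in \{a,b,c,d\}$ acts as $\delta_{U_\alpha}$, where $U_a = [a]_0$, $U_b = [C]_0 \cup [D]_0$, $U_c = [B]_0 \cup [D]_0$, $U_d = [B]_0 \cup [C]_0$ (equivalently $U_\alpha = \bigcup_{\beta \in S_\alpha} [\beta]_0$), and that by definition $\delta_{U_\alpha}(x) = \sigma(x)$ whenever $x \in U_\alpha$. Since $\sigma = \sigma_\Vee$, the task reduces to covering $\Vee$ by the clopen cylinders $[a]_0, [B]_0, [C]_0, [D]_0$ and, on each, selecting a generator $\alpha$ with that cylinder contained in $U_\alpha$. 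Concretely I would set $\gamma(x) = a$ on $[a]_0$, $\gamma(x) = c$ on $[B]_0$, and $\gamma(x) = b$ on $[C]_0 \cup [D]_0$; each choice satisfies $x \in U_{\gamma(x)}$, because $B \in S_c$ and $C, D \in S_b$. This $\gamma$ depends only on $x_0$, hence is locally constant with finite image, so it is a legitimate cocycle, and the cylinders genuinely partition $\Vee$ since $\Vee \subseteq \{a,B,C,D\}^\Z$.

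It then remains to verify that $\gamma(x) \act_\jump x = \sigma_\Vee(x)$ for every $x$, i.e.\ that on each piece we land in the ``$\sigma$'' branch of $\delta_U$ rather than the ``$\sigma^{-1}$'' or ``fixed'' branch. This is exactly where the $a$-alternating structure does the work: if $x_0$ is a non-$a$ symbol then $x_{-1} = a$, so $x \notin \sigma(U_{\gamma(x)})$, and dually $x_0 = a$ forces $x_{-1} \neq a$. This is the disjointness $U_{\gamma(x)} \cap \sigma(U_{\gamma(x)}) = \emptyset$ that makes $\delta_U$ unambiguous, so on each cylinder $\gamma(x) \act_\jump x = \delta_{U_{\gamma(x)}}(x) = \sigma(x) = \sigma_\Vee(x)$. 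Therefore $\sigma_\Vee$ coincides with the topological-full-group element determined by $\gamma$, giving $\sigma_\Vee \in \llbracket (\act_\jump, \Grig, \Vee) \rrbracket$.

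I do not expect a genuine obstacle here; the only points requiring care are bookkeeping of the shift direction (confirming that the chosen generator yields $\sigma$ and not $\sigma^{-1}$) and checking the cylinder-to-generator assignment against the sets $S_\alpha$, both of which are immediate from the $a$-alternation of $\Vee$.
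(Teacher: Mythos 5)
Your proof is correct and is essentially the paper's own argument: the paper likewise partitions $\Vee$ into the cylinders $[a]_0, [B]_0, [C]_0, [D]_0$ and assigns generators $a, c, d, b$ respectively, each of which acts as $\sigma$ on its cylinder via $\delta_{U_\alpha}$. Your only (immaterial) deviation is merging $[C]_0$ and $[D]_0$ under the single generator $b$ rather than using $d$ on $[C]_0$, which works equally well since $C, D \in S_b$.
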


\begin{proof}
Take the partition $A_1 = [a]_0$, $A_2 = [B]_0$, $A_3 = [C]_0$, $A_4 = [D]_0$, and $g_1 = a$, $g_2 = c$, $g_3 = d$, $g_4 = b$. This defines the shift map, because by the definition of the jump action on $\Vee$, $g_i$ shifts points in $[A_i]_0$ one step to the left.
\end{proof}

\begin{lemma}
\label{lem:IsSubshift}
The system $(\Grig, \Vee)$ is expansive, hence conjugate to a subshift over some finite alphabet.
\end{lemma}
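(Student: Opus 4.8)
The plan is to combine the two lemmas that immediately precede the statement. First I would observe that the cyclic $\Z$-action generated by the shift map $\sigma_\Vee$ on $\Vee$ is expansive in the classical sense: since $\Vee \subset \{a,B,C,D\}^\Z$ is a $\Z$-subshift, any two distinct points $x \neq y$ differ in some coordinate $i$, and then $\sigma_\Vee^i x$ and $\sigma_\Vee^i y$ disagree at coordinate $0$. Choosing the expansivity constant $\epsilon$ small enough that disagreement at coordinate $0$ forces distance $\geq \epsilon$ (possible because the alphabet is discrete), we see that $\langle \sigma_\Vee \rangle \cong \Z$ acts expansively on $\Vee$. This is just the textbook fact that a subshift is expansive under its shift action, applied to $\Vee$.

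Next, the lemma immediately above shows that $\sigma_\Vee \in \llbracket (\act_\jump, \Grig, \Vee) \rrbracket$, so $\langle \sigma_\Vee \rangle$ is a finitely-generated (indeed cyclic) subgroup of the topological full group of $(\act_\jump, \Grig, \Vee)$, and by the previous paragraph it acts expansively on $\Vee$. I would then apply Lemma~\ref{lem:TFGExpansive} with $G = \Grig$, $X = \Vee$, and the expansive subgroup $H = \langle \sigma_\Vee \rangle$; this yields at once that the ambient action $(\Grig, \Vee)$ is itself expansive.

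Finally, I would invoke the general characterization recalled in the Preliminaries: up to topological conjugacy, a compact metrizable zero-dimensional system is a subshift over some finite alphabet exactly when it is expansive. Since $(\Grig, \Vee)$ is such a $\Grig$-system and has just been shown to be expansive, it is topologically conjugate to a subshift on $\Grig$ over some finite alphabet, which is the assertion of the lemma.

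There is essentially no genuine obstacle here, as all the real content has been front-loaded into Lemma~\ref{lem:TFGExpansive} (which transfers expansivity from a subgroup of the topological full group to the full ambient action via the minimal-length cocycle argument) and into the identification of $\sigma_\Vee$ as an element of that full group. The only point requiring any care is verifying that the shift on the $\Z$-subshift $\Vee$ is genuinely expansive, but this is immediate from the definition of a subshift and the discreteness of the alphabet.
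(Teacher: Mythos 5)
Your proposal is correct and follows exactly the paper's own argument: you identify $\langle \sigma_\Vee \rangle \cong \Z$ as an expansive finitely-generated subgroup of the topological full group (using the preceding lemma that $\sigma_\Vee \in \llbracket (\act_\jump, \Grig, \Vee) \rrbracket$), apply Lemma~\ref{lem:TFGExpansive} to transfer expansivity to the $\Grig$-action, and conclude via the standard expansivity characterization of subshifts. No issues.
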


\begin{proof}
The space is clearly a Cantor set, so it suffices to show expansivity. By the previous lemma, $\llbracket (\Grig, \Vee) \rrbracket$ contains the finitely-generated subgroup $\langle \sigma_\Vee \rangle \cong \Z$, which already acts expansively. By Lemma~\ref{lem:TFGExpansive}, $(\Grig, \Vee)$ is expansive.
\end{proof}

\begin{remark}
Note that $\sigma_\Vee$ is not itself the jump action of any element of the group $\Grig$, since $\Grig$ is a torsion-free. 
\end{remark}


\section{The system $(\Grig, \Vee)$ is a sofic subshift}
\label{sec:VSofic}

In this section, we prove the first part of Theorem~\ref{thm:Main} (soficness). The second part (properness) is proved in the next section.

As discussed in the introduction, if $\pi : G \to H$ is a group homomorphism, for $H$-systems we can define their \emph{pullback}, this is just the $G$-system with the same space and with action $g \act x = \pi(g) \act x$. The following theorem is due to Sebasti\'an Barbieri \cite{Ba19}.

\begin{theorem}
\label{thm:ProductSimulation}
Let $G, H, K$ be three finitely-generated infinite groups, and $\pi : G \times H \times K \to G$ the natural projection. Then the $\pi$-pullback of any effective expansive $G$-system admits an $G \times H \times K$-SFT cover.
\end{theorem}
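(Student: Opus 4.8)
The plan is to recognize this as exactly the product simulation theorem of Barbieri (it is the same statement as Theorem~\ref{thm:Barbieri}), and to reconstruct the argument of \cite{Ba19} along the now-standard ``computation inside an SFT'' paradigm developed for $\Z^2$ by Hochman \cite{Ho09}, Aubrun--Sablik \cite{AuSa13} and Durand--Romashchenko--Shen \cite{DuRoSh10}, transported to the product group. First I would reduce to subshifts: by the expansivity characterization, the effective $G$-system is conjugate to an effective $G$-subshift $X \subset A^G$, defined by a recursively enumerable family of forbidden patterns. Its $\pi$-pullback is then conjugate to the $G \times H \times K$-subshift $\tilde{X} \subset A^{G \times H \times K}$ consisting of the configurations that are constant along each coset $(g,1,1)(\{1\} \times H \times K)$ and whose induced $G$-labeling $g \mapsto \tilde{x}_{(g,1,1)}$ lies in $X$. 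It therefore suffices to build an SFT on $G \times H \times K$ that factors onto $\tilde{X}$.

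The heart of the construction is to use the two infinite finitely-generated factors $H$ and $K$ to host a two-dimensional computation substrate. For each of these factors I would first build an auxiliary SFT whose configurations mark out an unbounded hierarchical, self-similar structure (a nested sequence of ``levels'' in the fixed-point style of \cite{DuRoSh10}), so that every point has access to arbitrarily large, but finite, computation regions. Taking the product of such structures across the $H$- and $K$-directions yields, near each point, a grid on which one can embed a universal self-simulating tileset carrying out Turing computations of unbounded size. This is precisely where the two extra factors are essential: a single extra factor would provide only one computational direction, which does not suffice to simulate a semi-algorithm of arbitrary size.

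Onto this substrate I would superimpose the $A$-layer copied from the $G$-direction, with SFT rules enforcing constancy along $\{1\} \times H \times K$ so that it genuinely encodes a point of $A^G$, together with local rules letting each computation region read a finite window of the $G$-labeling. The embedded machine then simulates the enumeration of the forbidden patterns of $X$; when a region detects a forbidden pattern inside the portion of the $G$-labeling it can access, it raises an error flag that an SFT rule forbids. Since arbitrarily large computation regions recur by the hierarchical structure, every finite window of $G$ is eventually inspected, so a configuration survives in the SFT precisely when its $A$-layer avoids all forbidden patterns, i.e.\ lies in $X$. The factor map erases the computation and synchronization layers and returns the $A$-labeling, landing in $\tilde{X}$; surjectivity holds because any point of $X$ can be decorated with a legal accepting computation, and well-definedness because the rules force the $A$-layer into $X$.

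The genuinely hard, technical step — the one that is the true content of \cite{Ba19} — is the second one: rigorously constructing the self-simulating computation substrate over the product of two \emph{arbitrary} infinite finitely-generated groups, whose Cayley geometry is essentially uncontrolled. One must build the hierarchical structure and the synchronization with the $G$-direction using only finitely many local rules, arrange that the machine can correctly read the relevant finite windows of the $G$-labeling despite the unknown group geometry, and guarantee that every finite region of $G$ is eventually checked somewhere. Making this bookkeeping of ``which part of $X$ is read where'' realizable by a finite SFT alphabet and finitely many adjacency rules is the crux, and I would rely on Barbieri's construction for these details rather than reproduce them here.
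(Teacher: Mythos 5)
This theorem is not proved in the paper at all: it is stated as an imported result of Barbieri and justified solely by the citation to \cite{Ba19}, so your decision to attribute the statement to that paper and defer the technical details there is exactly what the authors do, and in that sense your proposal matches the paper's treatment. Since you also offer a reconstruction sketch, two remarks on where it drifts from Barbieri's actual architecture. First, the computation substrate on $H\times K$ is not obtained by building a hierarchical fixed-point structure directly on each factor; rather, one constructs on each infinite finitely generated factor an SFT whose configurations draw bi-infinite paths, with nonemptiness guaranteed by Seward's theorem that every infinite finitely generated group admits a transitive translation-like action of $\Z$ (for a suitable choice of generators), and the superposition of the two path layers yields the $\Z^2$-grids onto which the existing two-dimensional simulation theorems \cite{DuRoSh10,AuSa13} are imported wholesale. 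Second, and more importantly, expansivity is not used merely to replace the system by a subshift at the outset: in the construction each grid inside an $H\times K$-coset carries its own internal encoding of a point of $X$ (in the Toeplitz-style format produced by the $\Z^2$ simulation), and one must force all these encodings, across all grids and all $H\times K$-cosets along a $G$-orbit, to describe one and the same point consistent with the $A$-layer in the $G$-direction. This synchronization is achieved by sharing the expansive-partition itinerary data locally, and it is precisely the step that fails for non-expansive systems (which is why the authors treat the non-expansive generalization as conjectural in their own notes). Your sketch, which has the computation regions read windows of a single coset-constant $A$-layer directly, elides this synchronization issue; it is not wrong in spirit, but it hides the one place where the expansivity hypothesis genuinely enters the proof.
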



For $\alpha, \beta \in \{0,1\}$ let $G_{\alpha \beta}$ be the subgroup of the group $\Grig$ containing those $g \in \Grig$ satisfying:
\[ \forall \alpha', \beta' \in \{0,1\}: \alpha'\beta' \neq \alpha\beta \implies \forall x \in \{0,1\}^\N: g \act_{\tree} \alpha'\beta' x = \alpha' \beta'x. \]
Of course for $g \in G_{\alpha\beta}$ we have
\[ \forall x \in \{0,1\}^\N: \exists y \in \{0,1\}^\N: g \act_{\tree} \alpha \beta x = \alpha \beta y \]
The groups $G_{\alpha \beta}$ are usually called rigid stabilizers of the second level \cite{Gr00}.

The following is well-known \cite{Ha00}:

\begin{lemma}
The subgroup $\Grig_2 = G_{00} \times G_{01} \times G_{10} \times G_{11}$ is of finite index in $\Grig$.
\end{lemma}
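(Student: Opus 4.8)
The plan is to recognize the groups $G_{\alpha\beta}$ as the \emph{rigid vertex stabilizers} of the four level-$2$ vertices, so that $\Grig_2 = G_{00}\times G_{01}\times G_{10}\times G_{11}$ is exactly the level-$2$ rigid stabilizer $\mathrm{Rist}(2)$, and then to prove finite index by the standard ``branch group'' bootstrap, reducing the level-$2$ claim to a level-$1$ claim via self-similarity. The starting data are the level-$1$ swap $a$ and the section decomposition $\psi : \mathrm{St}(1) \hookrightarrow \Grig \times \Grig$ (the embedding of the level-$1$ stabilizer given by the two subtree actions), which on generators reads $b \mapsto (a,c)$, $c \mapsto (a,d)$, $d \mapsto (1,b)$. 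Here $[\Grig : \mathrm{St}(1)] = 2$ since $a$ acts transitively on level $1$, and both coordinate projections $p_i \circ \psi$ are surjective: the second coordinates of $\psi(d) = (1,b)$, $\psi(b) = (a,c)$, $\psi(c) = (a,d)$, $\psi(aba) = (c,a)$ run over $\{a,b,c,d\}$, and symmetrically for the first coordinate using $\psi(ada)=(b,1)$, $\psi(aba)=(c,a)$, $\psi(aca)=(d,a)$.

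\textbf{Reduction to level $1$.} Since $\mathrm{Rist}(2) \leq \mathrm{St}(2) \leq \mathrm{St}(1)$, I would first check the self-similar identification $\psi(\mathrm{Rist}(2)) = \mathrm{Rist}(1) \times \mathrm{Rist}(1)$, where $\mathrm{Rist}(1)$ denotes the level-$1$ rigid stabilizer of a single subtree copy of $\Grig$ (the two level-$2$ vertices below vertex $0$ are precisely the level-$1$ vertices of the $0$-copy, and likewise below $1$). Granting moreover that $\mathrm{Rist}(1)\times\mathrm{Rist}(1)$ already lies in $\psi(\mathrm{St}(1))$, injectivity of $\psi$ gives $[\mathrm{St}(1):\mathrm{Rist}(2)] \leq [\Grig\times\Grig : \mathrm{Rist}(1)\times\mathrm{Rist}(1)] = m^2$ where $m = [\Grig:\mathrm{Rist}(1)]$, hence $[\Grig:\Grig_2] \leq 2m^2$. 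Thus everything reduces to showing $m < \infty$.

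\textbf{The heart of the argument.} Let $B = \langle b \rangle^{\Grig}$ be the normal closure of $b$. I would show that the two single-subtree rigid stabilizers $G_0, G_1$ making up $\mathrm{Rist}(1) = G_0 \times G_1$ satisfy $G_1 \supseteq \psi^{-1}(\{1\}\times B)$ and, symmetrically, $G_0 \supseteq \psi^{-1}(B \times \{1\})$. For the first, set $N = \{\, h \in \Grig \;|\; (1,h)\in \psi(\mathrm{St}(1)) \,\}$; this is a subgroup containing $b$ (because $\psi(d) = (1,b)$), and it is normal because, given $(1,h)\in\psi(\mathrm{St}(1))$ and arbitrary $g\in\Grig$, surjectivity of the second projection provides $s\in\mathrm{St}(1)$ with $p_2(\psi(s)) = g$, and then $\psi(s)(1,h)\psi(s)^{-1} = (1, ghg^{-1})\in\psi(\mathrm{St}(1))$. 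Hence $N \supseteq B$, and since any preimage of $(1,h)$ acts trivially on the $0$-subtree and fixes level $1$, we get $\psi^{-1}(\{1\}\times B)\leq G_1$. The symmetric statement uses $\psi(ada)=(b,1)$. Consequently $\mathrm{Rist}(1) = G_0\times G_1 \supseteq \psi^{-1}(B\times B)$.

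\textbf{Finishing via $[\Grig:B]<\infty$.} It remains to see that $B$ has finite index, which is a short relator computation in the quotient $\Grig/B$: imposing $b = 1$ forces $c = d$ (from the relator $bcd$), so $\Grig/B$ is generated by the involutions $a,c$, and the relator $(ad)^4$ becomes $(ac)^4 = 1$; thus $\Grig/B$ is a quotient of the dihedral group of order $8$, in particular finite. Therefore $[\Grig\times\Grig : B\times B]<\infty$, so $\psi^{-1}(B\times B)$ has finite index in $\mathrm{St}(1)$ (as $\psi$ is injective) and hence in $\Grig$, giving $m = [\Grig:\mathrm{Rist}(1)]<\infty$. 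Plugging this into the reduction above completes the proof. I expect the main obstacle to be the self-similar identification $\psi(\mathrm{Rist}(2)) = \mathrm{Rist}(1)\times\mathrm{Rist}(1)$ together with the verification of the section formulas $\psi(d)=(1,b)$, $\psi(b)=(a,c)$, etc.\ (the standard branch-group bookkeeping for $\Grig$, as in \cite{Gr00,Ha00}); the finiteness of $\Grig/B$ is immediate once these are in place.
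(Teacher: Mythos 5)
The paper offers no proof of this lemma at all --- it is stated as well-known with a citation to \cite{Ha00} --- so your argument has to stand on its own. Your overall strategy is the standard branch-group one, and most of your ingredients check out against the paper's definitions: the section formulas $\psi(b)=(a,c)$, $\psi(c)=(a,d)$, $\psi(d)=(1,b)$ are correct for the generators as defined in Section~3, the normality trick showing $\{1\}\times B\subseteq\psi(\mathrm{St}(1))$ for $B=\langle b\rangle^{\Grig}$ is sound, $\psi^{-1}(\{1\}\times B)\leq G_1$ and $\psi^{-1}(B\times\{1\})\leq G_0$ are correct, and $\Grig/B$ is indeed a quotient of the dihedral group of order $8$. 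This correctly yields $[\Grig:\mathrm{Rist}(1)]<\infty$.

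The step you flag as the main obstacle, the identification $\psi(\mathrm{Rist}(2))=\mathrm{Rist}(1)\times\mathrm{Rist}(1)$, is however false, and it fails in exactly the direction your index bound needs. Only $\psi(\mathrm{Rist}(2))\subseteq\mathrm{Rist}(1)\times\mathrm{Rist}(1)$ holds. For the reverse: $d\in G_1\leq\mathrm{Rist}(1)$ (since $\psi(d)=(1,b)$), so $(d,1)\in\mathrm{Rist}(1)\times\mathrm{Rist}(1)$, yet $(d,1)\notin\psi(\mathrm{St}(1))\supseteq\psi(\mathrm{Rist}(2))$. Indeed, let $\rho:\Grig\to(\Z/2)^2$ kill $b$ and send $a\mapsto(1,0)$, $c,d\mapsto(0,1)$; the generators $b,c,d,aba,aca,ada$ of $\mathrm{St}(1)$ all have $(\rho\times\rho)\circ\psi$-image in the order-$4$ subgroup generated by $\bigl((1,0),(0,1)\bigr)$ and $\bigl((0,1),(1,0)\bigr)$, whereas $(d,1)$ maps to $\bigl((0,1),(0,0)\bigr)$, which is not in it. (Morally: no element of $\Grig$ acts as $d$ on one level-$1$ subtree and trivially on the other.) Hence the inequality $[\mathrm{St}(1):\mathrm{Rist}(2)]\leq[\Grig\times\Grig:\mathrm{Rist}(1)\times\mathrm{Rist}(1)]$ does not follow. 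The gap is repairable with your own ingredients: set $D_0=\psi^{-1}(B\times\{1\})\leq G_0$, $D_1=\psi^{-1}(\{1\}\times B)\leq G_1$ and $E=(B\cap D_0)(B\cap D_1)$, a finite-index subgroup of $\Grig$ contained in $B\cap\mathrm{Rist}(1)$. Since $E\subseteq B$ we get $E\times E\subseteq B\times B\subseteq\psi(\mathrm{St}(1))$, and since $E\subseteq(N\cap G_0)(N\cap G_1)$ (with $N$ your normal subgroup $\{h:(1,h)\in\psi(\mathrm{St}(1))\}$), the lift $\psi^{-1}(\{1\}\times E)$ lands in $G_{10}G_{11}$ and symmetrically for $G_{00}G_{01}$; thus $\psi^{-1}(E\times E)\leq\Grig_2$ has finite index. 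Alternatively, use the classical self-replicating subgroup $K=\langle(ab)^2\rangle^{\Grig}$, which satisfies $\psi(K)\supseteq K\times K$ and lets the same induction run at every level.
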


Let $\phi : \Vee \to \Tree$ be the map from Lemma~\ref{lem:VTCover}. Consider the clopen set $\Tree_{\alpha\beta} = \alpha\beta\{0,1\}^\N$ of words that begin with $\alpha\beta \in \{0,1\}^2$, and observe that the subgroup $G_{\alpha\beta} \leq \Grig$ stabilizes $\Tree_{\alpha\beta}$ (as a set), while $G_{\alpha'\beta'}$ stabilizes it pointwise. Let now $\Vee_{\alpha\beta} = \phi^{-1}(\Tree_{\alpha\beta})$, another clopen set (since $\phi$ is continuous).

\begin{lemma}
For all $\alpha\beta$, the action of $\Grig_2$ stabilizes $\Vee_{\alpha\beta}$ as a set.
\end{lemma}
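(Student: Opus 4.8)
The plan is to reduce the claim about $\Vee_{\alpha\beta}$ to the corresponding (and essentially trivial) claim about $\Tree_{\alpha\beta}$, using the equivariance of the factor map $\phi$. Recall from Lemma~\ref{lem:VTCover} that $\phi : \Vee \to \Tree$ satisfies $\phi(g \act_\jump x) = g \act_\tree \phi(x)$ for all $g \in \Grig$ and $x \in \Vee$. Since $\Vee_{\alpha\beta} = \phi^{-1}(\Tree_{\alpha\beta})$, equivariance immediately shows that any $g \in \Grig$ which stabilizes $\Tree_{\alpha\beta}$ as a set also stabilizes $\Vee_{\alpha\beta}$ as a set: if $x \in \Vee_{\alpha\beta}$, i.e.\ $\phi(x) \in \Tree_{\alpha\beta}$, then $\phi(g \act_\jump x) = g \act_\tree \phi(x) \in \Tree_{\alpha\beta}$, so $g \act_\jump x \in \phi^{-1}(\Tree_{\alpha\beta}) = \Vee_{\alpha\beta}$. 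Thus it suffices to prove that $\Grig_2$ stabilizes the clopen set $\Tree_{\alpha\beta}$ as a set.

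First I would observe that $\Grig_2 = G_{00} \times G_{01} \times G_{10} \times G_{11}$ is generated by the union of its four direct factors $G_{\gamma\delta}$, so it is enough to check that each $G_{\gamma\delta}$ stabilizes $\Tree_{\alpha\beta}$. This splits into two cases, both read off directly from the definition of the second-level rigid stabilizers. When $\gamma\delta = \alpha\beta$, the subgroup $G_{\alpha\beta}$ stabilizes $\Tree_{\alpha\beta}$ as a set, as already noted in the paragraph preceding the lemma. When $\gamma\delta \neq \alpha\beta$, the defining property of $G_{\gamma\delta}$ says that its elements act as the identity on every $\Tree_{\alpha'\beta'}$ with $\alpha'\beta' \neq \gamma\delta$; in particular they fix $\Tree_{\alpha\beta}$ pointwise, and therefore certainly stabilize it as a set.

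Combining these, every generator of $\Grig_2$ maps $\Tree_{\alpha\beta}$ into itself, so $\Grig_2$ stabilizes $\Tree_{\alpha\beta}$, and by the equivariance reduction above it stabilizes $\Vee_{\alpha\beta}$. I do not expect any genuine obstacle here: the whole content lies in the definition of the rigid stabilizers $G_{\gamma\delta}$ together with the equivariance of $\phi$. The only point requiring a little care is that it is \emph{set}-stabilization, rather than pointwise fixing, that must transfer through $\phi^{-1}$ — and this is exactly what equivariance delivers, so no finer information about the fibers of $\phi$ is needed.
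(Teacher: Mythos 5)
Your proof is correct and follows essentially the same route as the paper: the paper's proof likewise notes that $\Grig_2$ clearly stabilizes $\Tree_{\alpha\beta}$ and then transfers this through the equivariance of $\phi$ via $\phi(g \act_\jump x) = g \act_\tree \phi(x)$. You merely spell out the ``clearly'' step by checking each rigid stabilizer factor separately, which is a harmless elaboration.
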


\begin{proof}
Clearly $\Grig_2$ stabilizes each $\Tree_{\alpha\beta}$ as a set. If $g \in \Grig_2$ and $x \in \Vee_{\alpha\beta}$, then $\phi(x) \in \Tree_{\alpha\beta}$ so $\phi(g \act_{\jump} x) = g \act_{\tree} \phi(x) \in g \act_{\tree} \Tree_{\alpha\beta} = \Tree_{\alpha\beta}$, thus $g \act_{\jump} x \in \phi^{-1}(\Tree_{\alpha\beta}) = \Vee_{\alpha\beta}$.
\end{proof}

We obtain that $(\Grig_2, \Vee)$ is a direct union of the corresponding subsystems.

\begin{lemma}
\label{lem:IsUnion}
The system $(\Grig_2, \Vee)$ is a disjoint union $\bigsqcup_{\alpha, \beta} (\Grig_2, \Vee_{\alpha\beta})$.
\end{lemma}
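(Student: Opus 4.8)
The plan is to verify that the four clopen sets $\Vee_{\alpha\beta}$ form a $\Grig_2$-invariant partition of $\Vee$; once this is in place, being a disjoint union of the corresponding subsystems is immediate from the definition of a disjoint union of $G$-systems in the zero-dimensional setting. The genuine content has in fact already been isolated in the preceding lemma (invariance of each piece), so what remains is essentially bookkeeping.

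First I would record that the cylinder sets $\Tree_{\alpha\beta} = \alpha\beta\{0,1\}^\N$, as $\alpha\beta$ ranges over $\{0,1\}^2$, are pairwise disjoint and cover $\Tree$, since every point of $\{0,1\}^\N$ has a unique length-$2$ prefix. Applying $\phi^{-1}$, and using that taking preimages commutes with unions and intersections, the sets $\Vee_{\alpha\beta} = \phi^{-1}(\Tree_{\alpha\beta})$ are therefore pairwise disjoint and satisfy $\bigcup_{\alpha\beta}\Vee_{\alpha\beta} = \phi^{-1}(\Tree) = \Vee$. Continuity of $\phi$, established in Lemma~\ref{lem:VTCover}, guarantees each $\Vee_{\alpha\beta}$ is clopen, so this is genuinely a clopen partition of $\Vee$. (Surjectivity of $\phi$, also from Lemma~\ref{lem:VTCover}, further shows each piece is nonempty, though this is not needed for the statement.)

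The invariance of each $\Vee_{\alpha\beta}$ under $\Grig_2$ is precisely the content of the immediately preceding lemma, so each restricted action $(\Grig_2, \Vee_{\alpha\beta})$ is a well-defined subsystem. Combining the clopen partition with this invariance yields the decomposition $(\Grig_2, \Vee) = \bigsqcup_{\alpha\beta}(\Grig_2, \Vee_{\alpha\beta})$. There is no real obstacle here: the only point requiring a moment of care is that $\phi^{-1}$ transports the obvious clopen partition of $\Tree$ by length-$2$ prefixes into a clopen partition of $\Vee$, which is purely formal and follows from continuity of $\phi$ together with the fact that $\Grig_2$ stabilizes each $\Tree_{\alpha\beta}$ setwise.
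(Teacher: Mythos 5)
Your proposal is correct and matches the paper's (implicit) reasoning exactly: the paper states this lemma without proof as an immediate consequence of the preceding invariance lemma, and your argument — that the $\Tree_{\alpha\beta}$ form a clopen partition of $\Tree$, hence their $\phi$-preimages form a clopen partition of $\Vee$, with invariance supplied by the previous lemma — is precisely the intended justification. Nothing is missing.
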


Since $\phi$ is a factor map, $G_{\alpha\beta}$ fixes fibers of points in $\Vee_{\alpha'\beta'}$ when $\alpha\beta \neq \alpha'\beta'$, but in principle it could move points within the fibers. We show that this action is trivial.

\begin{lemma}
\label{lem:PointwiseInV}
For all $\alpha\beta \neq \alpha'\beta'$, the action of $G_{\alpha\beta}$ stabilizes $\Vee_{\alpha'\beta'}$ pointwise.
\end{lemma}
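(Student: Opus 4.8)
The plan is to show that if $g \in G_{\alpha\beta}$ and $x \in \Vee_{\alpha'\beta'}$ with $\alpha\beta \neq \alpha'\beta'$, then $g \act_\jump x = x$. We already know from Lemma~\ref{lem:VTCover} that $\phi$ is a factor map, so $\phi(g \act_\jump x) = g \act_\tree \phi(x) = \phi(x)$, since $g \in G_{\alpha\beta}$ fixes $\Tree_{\alpha'\beta'}$ pointwise. Thus $g \act_\jump x$ lies in the same $\phi$-fiber as $x$. The point is that $g$ might still permute within this fiber, and we must rule this out. The natural tool is the characterization of the jump action via its local movement of the star/origin, established in the proof that $\Grig$ embeds in the topological full group: for any word $w \in H$ and any $x$, whether $w \act_\jump x = x$ depends only on a finite window $u \str v$ of $x$ around the origin, via the jump action on SAWs.

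The key step is therefore to reduce the equality $g \act_\jump x = x$ to a statement about the jump action on starrings of some $w_n$. First I would fix $g \in G_{\alpha\beta}$, presented as a freely reduced word in $H$, and choose $k$ large enough (as in the embedding lemma) that the cocycle of every suffix $g_{[i,|g|-1]}$ has image in $[-k+1,k-1]$ on all of $\Vee$; then whether $g \act_\jump x = x$ is determined by the window $u \str v$ with $u = x_{[-k,-1]}$, $v = x_{[0,k]}$. Since this window occurs inside some natural $w_n$ (for $n$ large enough that $w_n$ contains a window of length $2k+1$ around the origin, which exists because the central $w_n$s grow), the equality $g \act_\jump x = x$ is equivalent to $g$ fixing the corresponding starring of $w_n$ under $\act_\jump$.

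Now I invoke Lemma~\ref{lem:ActionOnStarrings}: the jump action on starrings of $w_n$ is conjugate, via $\phi_n$, to the tree action on $\{0,1\}^n$. Under this conjugacy, the star position of $x$ corresponds to the length-$n$ prefix $\phi(x)_{[0,n-1]}$, which begins with $\alpha'\beta'$ since $x \in \Vee_{\alpha'\beta'}$. But $g \in G_{\alpha\beta}$ fixes every word of $\{0,1\}^n$ beginning with $\alpha'\beta' \neq \alpha\beta$ under $\act_\tree$, by the very definition of $G_{\alpha\beta}$ (the rigid stabilizer condition passes to the level-$n$ quotient action). Hence $g$ fixes the starring of $w_n$ corresponding to $x$, and therefore $g \act_\jump x = x$.

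The main obstacle is the bookkeeping in the first reduction step: one must ensure that the relevant window around the origin genuinely sits inside a single natural $w_n$ and that the star position there matches the tree-coordinate $\phi(x)_{[0,n-1]}$ compatibly with the conjugacy $\phi_n$ and the inverse-limit construction of $\phi$ from Lemma~\ref{lem:VTCover}. Concretely, one needs that for $x \in \Vee_{\alpha'\beta'}$ the prefix $\phi(x)_{[0,n-1]}$ computed via the central-$w_{n+1}$ recipe of Lemma~\ref{lem:VTCover} agrees with the $\phi_n$-image of the star position used when restricting to the natural $w_n$ containing the window; this is exactly the compatibility $\psi_{n+1}(x)_{[0,n-1]} = \psi_n(x)_{[0,n-1]}$ already verified in the proof of Lemma~\ref{lem:VTCover}, so it can be cited rather than redone. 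Once this alignment is in place, the rigid-stabilizer definition finishes the argument immediately.
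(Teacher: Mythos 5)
Your proof takes a genuinely different route from the paper's. The paper argues by contradiction: since $\phi$ is a factor map and $g \in G_{\alpha\beta}$ fixes $\Tree_{\alpha'\beta'}$ pointwise, a $g$ that moves some $x \in \Vee_{\alpha'\beta'}$ must move it within its $\phi$-fiber, hence (after pushing $x$ off the countable set of $6$-to-$1$ points) act as the reversal $f$ on a whole neighbourhood of $x$; but $g$ also acts as a fixed power $\sigma^k$ of the shift on a small enough clopen set (topological full group), and $\sigma^k = f$ on a nonempty open set produces a periodic point via minimality, contradicting aperiodicity of $\Vee$. Your argument is direct: you localize the jump action to a finite window via the cocycle bound $k$, transport that window into a starring of a natural $w_m$, and finish with the conjugacy $\phi_m$ of Lemma~\ref{lem:ActionOnStarrings} plus the defining property of the rigid stabilizer. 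This avoids any use of minimality, aperiodicity, or the fiber structure, and shows the stronger pointwise statement "from the inside''; the cost is the bookkeeping with the inverse-limit construction of $\phi$, which as you say is exactly the compatibility $\psi_{n+1}(x)_{[0,n-1]} = \psi_n(x)_{[0,n-1]}$ already verified in Lemma~\ref{lem:VTCover}.

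There is, however, one genuine gap: the claim that the window $x_{[-k,k]}$ sits inside a single natural $w_n$ for $n$ large "because the central $w_n$s grow'' fails for the countably many exceptional points identified at the end of the proof of Lemma~\ref{lem:VTCover} (and their shifts). For those $x$ some fixed position --- a separator symbol in $\{B,C,D\}$ --- lies in no natural $w_n$ for any $n$; if that position falls in $[-k,k]$, the window never embeds in a natural $w_n$ and your reduction does not apply to $x$. The repair is short: your argument does apply to the residual set of $2$-to-$1$ points, which is dense in the clopen set $\Vee_{\alpha'\beta'}$, and since $g$ acts by a homeomorphism, $g\act_\jump x = x$ on a dense subset extends by continuity to all of $\Vee_{\alpha'\beta'}$. (A minor cosmetic point: $\phi_m$ sends the star position of the origin in the central $w_m$ to a word whose first $m-1$ bits are $\phi(x)_{[0,m-2]}$; the last bit need not agree with $\phi(x)_{m-1}$, but you only need the first two bits $\alpha'\beta'$, so this does not affect the argument.) With the density step added, the proof is correct.
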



\begin{proof}
If not, then there is $g \in G_{\alpha\beta}$ that maps some $x \in \Vee_{\alpha'\beta'}$ into a different point in its fiber. Since the action of $g$ is continuous and $\Vee$ is a Cantor space, we may assume $x$ is not in the countable set $C$ of points with fiber size $6$. Thus, $g$ in fact acts as the reversal map $f$ on the point $x$. In particular, $g$ acts nontrivially on every point sufficiently close to $x$, and we conclude that $g$ acts as precisely $f$ on $U \setminus C$, where $U$ is an open set containing $x$. Then it acts as $f$ on all of $U$, since $U \setminus C$ is dense in $U$.

On the other hand, if $U$ is small enough, then $g$ acts as a fixed power of the shift in $U$, since the jump action is by elements of the topological full group. Thus its suffices to show that there cannot be any nonempty open set where $\sigma^k$ acts as the reversal $f$. 

To see that this is impossible, observe that any point $x \in \Vee$ in such an open set $U$ would have a shift by some $\sigma^m$ which is again in $U$ (since $(\sigma_\Vee, \Vee)$ is minimal), and the first shift $\sigma^k$ would again give its reflection. Composing two distinct reflections gives a shift, so this would give us a periodic point.

In formulas, suppose $x \in U$. Then $x$ satisfies $x_{i+k} = \sigma^k(x)_i = f(x)_i = x_{-1-i}$, for all $i$. Since $(\sigma_\Vee, \Vee)$ is minimal, there exists a positive $m$ such that $\sigma^m(x) \in U$, so that also $x_{i+k+m} = \sigma^{k+m}(x)_i = f(\sigma^m(x))_i = x_{-1-i+m}$, for all $i$. Then $x_{i+k} = x_{-1-i}$ and $x_{i+k+m} = x_{-1-i+m}$ for all $i$. Replacing $i$ by $i+m$ in the second equality, we conclude $x_{i+k+2m} = x_{i+k}$ for all $i$, so $x$ is periodic, contradicting the aperiodicity of $\Vee$.
\end{proof}

Next, we have to prove expansivity of $(G_{\alpha\beta}, \Vee_{\alpha\beta})$, to be able to apply Theorem~\ref{thm:ProductSimulation}. 

\begin{lemma}
\label{lem:IsExpansive}
The system $(G_{\alpha\beta}, \Vee_{\alpha\beta})$ is expansive.
\end{lemma}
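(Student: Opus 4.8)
The plan is to obtain expansivity of $(G_{\alpha\beta}, \Vee_{\alpha\beta})$ by descending from the already-established expansivity of the full system $(\Grig, \Vee)$ (Lemma~\ref{lem:IsSubshift}) through the finite-index subgroup $\Grig_2 = G_{00}\times G_{01}\times G_{10}\times G_{11}$. There are three reduction steps: expansivity passes from $\Grig$ to the finite-index subgroup $\Grig_2$; it restricts to the $\Grig_2$-invariant clopen subset $\Vee_{\alpha\beta}$; and on $\Vee_{\alpha\beta}$ the $\Grig_2$-action factors through the single factor $G_{\alpha\beta}$, so that expansivity for $\Grig_2$ becomes literally expansivity for $G_{\alpha\beta}$. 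This route avoids having to analyze the topological full group of the restricted system directly.

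First I would verify that expansivity is inherited by finite-index subgroups. Fix an expansivity constant $\epsilon>0$ for $(\Grig,\Vee)$ and finitely many coset representatives $r_1,\dots,r_m$ with $\Grig=\bigcup_i r_i\Grig_2$. Each $r_i$ acts as a homeomorphism of the compact metric space $\Vee$, hence is uniformly continuous, so there is $\delta>0$ with $d(u,v)\le\delta \implies d(r_i u,r_i v)\le\epsilon$ for every $i$. Given $x\neq y$, expansivity of $(\Grig,\Vee)$ yields $g\in\Grig$ with $d(g\act x,g\act y)>\epsilon$; writing $g=r_i h$ with $h\in\Grig_2$ and using that the action is on the left gives $d(r_i\act(h\act x),r_i\act(h\act y))>\epsilon$, whence $d(h\act x,h\act y)>\delta$ by the choice of $\delta$. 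Thus $(\Grig_2,\Vee)$ is expansive with constant $\delta$. Since $\Vee_{\alpha\beta}$ is a closed $\Grig_2$-invariant subset (as already noted, $\Grig_2$ stabilizes each $\Vee_{\alpha\beta}$ setwise), the same constant $\delta$ witnesses expansivity of the subsystem $(\Grig_2,\Vee_{\alpha\beta})$: for two distinct points of $\Vee_{\alpha\beta}$ the separating group element already lies in $\Grig_2$, and this is the elementary fact that a subsystem of an expansive system (for the same acting group) is expansive.

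Finally I would pass from $\Grig_2$ to $G_{\alpha\beta}$ using Lemma~\ref{lem:PointwiseInV}. Write $g\in\Grig_2$ as a commuting product $g=\prod_{\alpha'\beta'} g_{\alpha'\beta'}$ with $g_{\alpha'\beta'}\in G_{\alpha'\beta'}$. For $z\in\Vee_{\alpha\beta}$, every factor $g_{\alpha'\beta'}$ with $\alpha'\beta'\neq\alpha\beta$ fixes $z$ by Lemma~\ref{lem:PointwiseInV}, and likewise fixes $g_{\alpha\beta}\act z\in\Vee_{\alpha\beta}$, so that $g\act z=g_{\alpha\beta}\act z$. Applying this identity to the separating element $g\in\Grig_2$ produced in the previous step yields $g_{\alpha\beta}\in G_{\alpha\beta}$ with $d(g_{\alpha\beta}\act x,g_{\alpha\beta}\act y)>\delta$, which is exactly the required expansivity of $(G_{\alpha\beta},\Vee_{\alpha\beta})$ with constant $\delta$. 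The only genuinely non-formal ingredients are the inheritance of expansivity under finite index (the uniform-continuity argument above) and Lemma~\ref{lem:PointwiseInV}; since the latter has already been proved and the former is routine, I do not expect a real obstacle here, the substantive work having been front-loaded into Lemma~\ref{lem:IsSubshift} and Lemma~\ref{lem:PointwiseInV}.
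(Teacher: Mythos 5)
Your proposal is correct and follows essentially the same route as the paper's proof: use expansivity of $(\Grig,\Vee)$ from Lemma~\ref{lem:IsSubshift}, push it down to the finite-index subgroup $\Grig_2$ via uniform continuity of finitely many coset representatives, and then observe via Lemma~\ref{lem:PointwiseInV} that on $\Vee_{\alpha\beta}$ the separating element of $\Grig_2$ acts through its $G_{\alpha\beta}$-component alone. The only difference is presentational (you spell out the finite-index inheritance and the restriction to the invariant clopen set as explicit separate steps, which the paper compresses into one sentence).
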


\begin{proof}
The subgroup $\Grig_2$ is of finite index in $\Grig$, let $\Grig = S \Grig_2$ for a finite set $S$. The group $\Grig$ acts expansively on $\Vee$ by Lemma~\ref{lem:IsSubshift}, so there exists $\epsilon > 0$ such that for any distinct $x, y \in \Vee_{\alpha\beta}$ there exists $g \in \Grig$ such that $d(gx, gy) > \epsilon$. Then for suitable $\delta > 0$, chosen using continuity of the actions of $s \in S$, we still have $d(sgx, sgy) > \delta$ where $s \in S$ is such that $sg \in \Grig_2$. Using the decomposition $\Grig_2 = G_{00} \times G_{01} \times G_{10} \times G_{11}$ we can write $sg = (g_{00}, g_{01}, g_{10}, g_{11})$. For $\alpha'\beta' \neq \alpha\beta$, the element $g_{\alpha'\beta'}$ fixes $x$ and $y$ (since it fixes $\Vee_{\alpha'\beta'}$ pointwise), thus $d(g_{\alpha\beta}x, g_{\alpha\beta}y) = d(sgx, sgy) > \delta$, showing expansivity.
\end{proof}

\begin{lemma}
\label{lem:IsPullBack}
For all $\alpha\beta$, the system $(\Grig_2, \Vee_{\alpha\beta})$ is the pullback of the system $(G_{\alpha\beta}, \Vee_{\alpha\beta})$ under the natural projection $\pi : \Grig_2 \to G_{\alpha\beta}$.
\end{lemma}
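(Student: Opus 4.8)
The plan is to observe that this lemma is an essentially immediate consequence of Lemma~\ref{lem:PointwiseInV}, once we unwind the definition of the pullback. Recall that the pullback of $(G_{\alpha\beta}, \Vee_{\alpha\beta})$ under $\pi : \Grig_2 \to G_{\alpha\beta}$ is by definition the system on the same space $\Vee_{\alpha\beta}$ in which an element $g \in \Grig_2$ acts by $g \act x = \pi(g) \act_\jump x$. Thus the content of the lemma is precisely the assertion that for every $g \in \Grig_2$ and every $x \in \Vee_{\alpha\beta}$ we have $g \act_\jump x = g_{\alpha\beta} \act_\jump x$, where $g_{\alpha\beta} = \pi(g)$ is the $G_{\alpha\beta}$-component of $g$ under the direct product decomposition $\Grig_2 = G_{00} \times G_{01} \times G_{10} \times G_{11}$.

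First I would record that $\Vee_{\alpha\beta}$ is a common invariant space for both systems: $G_{\alpha\beta}$ stabilizes $\Tree_{\alpha\beta}$ as a set, and since $\phi$ is $\Grig$-equivariant (Lemma~\ref{lem:VTCover}) and $\Vee_{\alpha\beta} = \phi^{-1}(\Tree_{\alpha\beta})$, it follows that $G_{\alpha\beta}$ stabilizes $\Vee_{\alpha\beta}$, so the system $(G_{\alpha\beta}, \Vee_{\alpha\beta})$ and its $\pi$-pullback genuinely live on the same set as $(\Grig_2, \Vee_{\alpha\beta})$.

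The key step is then to exploit the commuting direct-product structure of $\Grig_2$. Writing $g = (g_{00}, g_{01}, g_{10}, g_{11})$, the four factors pairwise commute, so we may isolate the $\alpha\beta$-coordinate and factor the action as
\[ g \act_\jump x = g_{\alpha\beta} \act_\jump \Big( \big( \textstyle\prod_{\alpha'\beta' \neq \alpha\beta} g_{\alpha'\beta'} \big) \act_\jump x \Big). \]
For each $\alpha'\beta' \neq \alpha\beta$, Lemma~\ref{lem:PointwiseInV} (applied with the roles of the two index pairs exchanged) tells us that $G_{\alpha'\beta'}$ fixes $\Vee_{\alpha\beta}$ pointwise; hence each $g_{\alpha'\beta'}$ fixes $x$, and so does their product. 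Therefore the inner factor acts trivially, giving $g \act_\jump x = g_{\alpha\beta} \act_\jump x = \pi(g) \act_\jump x$, which is exactly the pullback action.

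I expect no real obstacle here: the substantive work is entirely contained in Lemma~\ref{lem:PointwiseInV}, and the present statement is a repackaging of it. The only points requiring a little care are the bookkeeping of the index convention, so that Lemma~\ref{lem:PointwiseInV} is invoked for the correct pair of coordinates, and the use of commutativity of the direct factors to pull $g_{\alpha\beta}$ out front; both are routine.
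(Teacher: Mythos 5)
Your proposal is correct and is essentially the paper's own argument: the paper's proof is the one-line observation that $(g_{00}, g_{01}, g_{10}, g_{11}) \act x = g_{\alpha\beta} \act x$ for $x \in \Vee_{\alpha\beta}$, which rests on Lemma~\ref{lem:PointwiseInV} exactly as you spell out. Your version merely makes explicit the commutation and invariance bookkeeping that the paper leaves implicit.
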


\begin{proof}
Using the decomposition $\Grig_2 = G_{00} \times G_{01} \times G_{10} \times G_{11}$, if $x \in \Vee_{\alpha\beta}$ we have
$(g_{00}, g_{01}, g_{10}, g_{11}) \act x = g_{\alpha\beta} \act x$, which is the definition of the pullback.
\end{proof}

\begin{lemma}
For any $\alpha,\beta \in \{0,1\}$, the system $(G_{\alpha\beta}, \Vee_{\alpha\beta})$ is effective.
\end{lemma}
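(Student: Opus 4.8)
The plan is to verify the two ingredients in the definition of an effective system separately: that (a coding of) $\Vee_{\alpha\beta}$ is effectively closed, and that the jump action of each element of $G_{\alpha\beta}$ is computable. Both are obtained by first treating the ambient system $(\act_\jump, \Grig, \Vee)$ and then restricting to the clopen piece $\Vee_{\alpha\beta}$ and the subgroup $G_{\alpha\beta}$. Throughout, elements of $G_{\alpha\beta} \leq \Grig$ are represented by words in the generators $a,b,c,d$, and the configurations in $\Vee \subset \{a,B,C,D\}^\Z$ are coded into $\{0,1\}^\N$ in the standard way described in Section~\ref{sec:Preliminaries}; these codings are computable and preserve effectivity, so I will argue directly with $\Vee$ and with $\Z$-indexed windows.

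First I would observe that $\Vee$ is in fact a decidable subshift. By Lemma~\ref{lem:Language}, a word $u$ lies in the language of $\Vee$ if and only if it is a subword of $w_{n+3}$ for any $n$ with $|u| \le |w_n|$; since the words $w_n$ are produced by the explicit recursion $w_{n+1} = w_n \alpha w_n$ of~\eqref{eq:AlphaChoice}, this is a decidable condition, so the complement of $\Vee$ is a recursively enumerable union of cylinders, giving effective closedness. Next I would show the jump action of a fixed $g \in \Grig$ is computable. Since $\Grig$ embeds in the topological full group of $(\sigma_\Vee, \Vee)$ via the jump action, for $g$ given by a word $w$ of length $L$ the origin moves exactly as the star does in the jump action on SAWs and is displaced by at most $L$; concretely, $w \act_\jump x = \sigma^{\ell}(x)$ where the displacement $\ell$, with $|\ell| \le L$, is computed by reading the window of $x$ of radius $L+1$ about the origin, forming the corresponding SAW $u \str v$, and simulating the jump action on it exactly as in the worked example preceding Definition~\ref{def:Vee}. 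Thus $(w \act_\jump x)_n = x_{n+\ell}$ is computable from $x$ as an oracle and $n$ as input, which is precisely the computability required of the action.

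It remains to cut down to $\Vee_{\alpha\beta}$ and to $G_{\alpha\beta}$. The subgroup action is immediate: elements of $G_{\alpha\beta}$ are elements of $\Grig$, so the previous paragraph already yields computability of $g \act_\jump {-}$ for every $g \in G_{\alpha\beta}$. For the space, recall $\Vee_{\alpha\beta} = \phi^{-1}(\Tree_{\alpha\beta})$ is clopen, so it suffices to see that membership is decidable, i.e.\ that the first two bits $\phi(x)_{[0,1]}$ are computable from a bounded window of $x$. Here I would use the uniqueness of the level-$n$ decomposition from Lemma~\ref{lem:VTCover}, which shows that the set of $x$ carrying a natural $w_n$ at coordinates $[0,2^n-1]$ is clopen; hence the central $w_3$ around the origin is determined by a window of bounded size. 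Having located the central $w_3$, say occupying $[i,i+6]$ with $i \in [-7,0]$, one computes $\phi(x)_{[0,1]} = \phi_3(-i)_{[0,1]}$ directly from the explicit definition of $\phi_3$, and comparison with $\alpha\beta$ decides membership. Consequently the complement of $\Vee_{\alpha\beta}$ in $\{0,1\}^\N$ is a recursively enumerable union of cylinders (the complement of $\Vee$ together with the window-decidable points of $\Vee$ outside $\Vee_{\alpha\beta}$), so $\Vee_{\alpha\beta}$ is effectively closed. Together with the computable action, this shows $(G_{\alpha\beta}, \Vee_{\alpha\beta})$ is effective.

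The step I expect to be the main obstacle is the last one: turning the clopenness of $\Vee_{\alpha\beta}$ into genuine \emph{computability} of membership. Clopenness only guarantees the existence of a determining window, whereas effectivity demands that we can actually bound it, find it, and read off the answer. I expect to resolve this using the explicit, locally determined (recognizable) hierarchical block structure of the substitutive subshift $\Vee$ — precisely, that the central $w_3$ is locatable from a bounded and computable window, which is the effective content underlying the clopenness established in Lemma~\ref{lem:VTCover}. Everything else, namely the decidability of the language of $\Vee$ and the locality of the jump cocycles, is routine once this locality is in hand.
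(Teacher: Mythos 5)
Your proposal is correct, and its first three steps (decidability of the language of $\Vee$ via Lemma~\ref{lem:Language}, computability of the jump action from the bounded-window cocycle description, and restriction to the subgroup) coincide with the paper's argument. Where you diverge is the final step, and the divergence is instructive: you treat the effective closedness of $\Vee_{\alpha\beta}$ as the main obstacle and resolve it by making membership \emph{uniformly} decidable --- locating the central $w_3$ from a bounded window using the recognizability of the block structure from Lemma~\ref{lem:VTCover} and reading off $\phi_3(-i)_{[0,1]}$. That works, but it is more than the lemma requires. The paper instead observes that $\Vee_{\alpha\beta}$ is clopen in $\Vee$, hence cut out from $\Vee$ by \emph{some} finite set of additional forbidden patterns; since ``effectively closed'' is an existential statement (there exists a machine enumerating a defining family of cylinders), a finite set of patterns is trivially recursively enumerable, and one need not exhibit it, bound the window, or locate anything. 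So your stated worry that ``clopenness only guarantees the existence of a determining window, whereas effectivity demands that we can actually bound it, find it, and read off the answer'' conflates effectivity of the set with uniformity of the construction: the former is all that is needed here. What your extra work buys is a genuinely constructive description of the clopen set (useful if one wanted, say, a uniform algorithm in $\alpha\beta$ or an explicit SFT cover), at the cost of invoking recognizability of the substitutive hierarchy, which the paper's one-line argument avoids entirely.
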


\begin{proof}
We first show that the subset $\Vee_{\alpha\beta} \subset \{a, B, C, D\}^\Z$ is effectively closed (after bijecting $\Z$ with $\N$ by any computable formula). 
First we show that $\Vee$ is, i.e.\ a set of forbidden patterns can be enumerated for it. First this, it suffices to give an algorithm for checking whether a given word $u$ appears in the language of the subshift. But Lemma~\ref{lem:Language} immediately gives such an algorithm since it suffices to check whether $u$ appears as a subword of $w_{n+3}$ where $|u| \leq |w_n|$. We now observe that $\Vee_{\alpha\beta}$ is a clopen subset of $\Vee$, so it suffices to add a finite set of additional forbidden patterns to define it. 

Next, we need to show that the actions of elements of $G_{\alpha\beta}$ are computable. Again, we start by showing that even the action of $\Grig$ on $\Vee$ is computable. This is immediate, as we defined the action by an explicit formula: For a generator $g \in \{a, b, c, d\}$, the action on a point $x \in \Vee$ is to look at the symbols in the immediate neighborhood of the origin $0 \in \Z$ in $x$, and then shift the entire configuration. This is conjugated to a computable permutation of $\N$ through our choice of bijection $\Z \cong \N$ in the previous paragraph. Composition of computable functions is computable, and finally the restriction to $(G_{\alpha\beta}, \Vee_{\alpha\beta})$ preserves computability of the actions of all elements $g \in G_{\alpha\beta}$.
\end{proof}

\begin{lemma}
\label{lem:PiecesSofic}
For all $\alpha\beta$, the system $(\Grig_2, \Vee_{\alpha\beta})$ is topologically conjugate to a sofic shift.
\end{lemma}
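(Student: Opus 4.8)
The plan is to assemble the pieces that the preceding lemmas have set up and feed them into Barbieri's simulation theorem (Theorem~\ref{thm:ProductSimulation}). The statement we want is that $(\Grig_2, \Vee_{\alpha\beta})$ is conjugate to a sofic shift. By Lemma~\ref{lem:IsPullBack}, this system is the $\pi$-pullback of $(G_{\alpha\beta}, \Vee_{\alpha\beta})$ under the projection $\pi : \Grig_2 \to G_{\alpha\beta}$, where $\Grig_2 = G_{00} \times G_{01} \times G_{10} \times G_{11}$. So the core idea is to view $\Grig_2$ as a product of infinite groups with one factor distinguished as $G_{\alpha\beta}$, and apply the product simulation theorem.

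The key steps, in order, are as follows. First I would verify the hypotheses of Theorem~\ref{thm:ProductSimulation} for the base system $(G_{\alpha\beta}, \Vee_{\alpha\beta})$: it must be expansive and effective, and the groups involved must be finitely generated and infinite. Expansivity is exactly Lemma~\ref{lem:IsExpansive}, effectiveness is the immediately preceding lemma, and the rigid stabilizers $G_{\alpha\beta}$ are well-known to be finitely generated and infinite (being isomorphic to $\Grig$ itself, up to finite index, by the branch structure). Second, I would package the three remaining factors: Barbieri's theorem needs the product to split as $G \times H \times K$ with three infinite factors, and here we have four factors $G_{00}, G_{01}, G_{10}, G_{11}$, so I would group the three factors other than $G_{\alpha\beta}$ into two infinite groups (e.g.\ take $H$ to be one of them and $K$ the product of the remaining two), each of which is again finitely generated and infinite. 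With $G = G_{\alpha\beta}$, $\pi$ matches the natural projection. Third, the theorem then yields a $\Grig_2$-SFT cover of the pullback, i.e.\ of $(\Grig_2, \Vee_{\alpha\beta})$; since we already know from Lemma~\ref{lem:IsSubshift} (and the fact that $\Vee_{\alpha\beta}$ is clopen in the subshift $\Vee$) that this system is expansive, hence conjugate to a subshift, having an SFT cover means it is conjugate to a sofic shift.

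Concretely I would write: By Lemma~\ref{lem:IsPullBack}, $(\Grig_2, \Vee_{\alpha\beta})$ is the pullback of $(G_{\alpha\beta}, \Vee_{\alpha\beta})$ along $\pi : \Grig_2 \to G_{\alpha\beta}$. The system $(G_{\alpha\beta}, \Vee_{\alpha\beta})$ is expansive by Lemma~\ref{lem:IsExpansive} and effective by the previous lemma, and $G_{\alpha\beta}$ is finitely generated and infinite. Writing $\Grig_2 = G_{\alpha\beta} \times H \times K$ where $H$ and $K$ are obtained by grouping the three remaining rigid-stabilizer factors into two infinite finitely generated groups, Theorem~\ref{thm:ProductSimulation} applies and shows that the pullback admits a $\Grig_2$-SFT cover. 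Since $\Vee_{\alpha\beta}$ is a clopen subset of the expansive system $\Vee$ (Lemma~\ref{lem:IsSubshift}), the restricted $\Grig_2$-action is itself expansive, hence conjugate to a subshift; a subshift admitting an SFT cover is by definition (conjugate to) a sofic shift.

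**The main obstacle** I anticipate is purely bookkeeping rather than conceptual: Barbieri's theorem is stated for a product of \emph{exactly three} infinite groups, while $\Grig_2$ is a product of \emph{four}, so I must argue that regrouping the four factors into three (each still infinite and finitely generated) preserves both the pullback structure and the matching of $\pi$ with the natural projection onto the distinguished factor $G_{\alpha\beta}$. This is routine once one notes that a product of two infinite finitely generated groups is again infinite and finitely generated, and that the pullback depends only on the distinguished projection $\Grig_2 \to G_{\alpha\beta}$, not on how the complementary factors are partitioned. A secondary point to state carefully is the final implication ``expansive $+$ has SFT cover $\Rightarrow$ conjugate to a sofic shift,'' which relies on the equivalence between expansive zero-dimensional systems and subshifts, already recalled in the preliminaries.
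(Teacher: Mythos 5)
Your proposal is correct and follows essentially the same route as the paper: the paper also sets $G = G_{\alpha\beta}$, $H = G_{(1-\alpha)\beta}$, $K = G_{(1-\alpha)(1-\beta)} \times G_{\alpha(1-\beta)}$, invokes Lemma~\ref{lem:IsPullBack}, Lemma~\ref{lem:IsExpansive} and the effectiveness lemma, and applies Theorem~\ref{thm:ProductSimulation}. Your extra closing remark that expansivity plus an SFT cover yields conjugacy to a sofic shift is a harmless (and slightly more careful) addition to what the paper writes.
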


\begin{proof}
Let $G = G_{\alpha\beta}, H = G_{(1-\alpha)\beta}, K = G_{(1-\alpha)(1-\beta)} \times G_{\alpha(1-\beta)}$. Then $(\Grig_2, \Vee_{\alpha\beta}) = (G \times H \times K, \Vee_{\alpha\beta})$ is the pullback of $(G, \Vee_{\alpha\beta})$ under the natural projection by Lemma~\ref{lem:IsPullBack}. By Lemma~\ref{lem:IsExpansive}, $(G, \Vee_{\alpha\beta})$ is expansive, and by the previous lemma, it is also effective. Thus, Theorem~\ref{thm:ProductSimulation} implies that this system is SFT covered.
\end{proof}

\begin{lemma}
The system $(\Grig_2, \Vee)$ is topologically conjugate to a sofic shift.
\end{lemma}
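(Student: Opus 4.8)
The plan is to assemble the global statement for $(\Grig_2, \Vee)$ from the pieces already established for the four clopen subsystems $(\Grig_2, \Vee_{\alpha\beta})$. First I would invoke Lemma~\ref{lem:IsUnion}, which decomposes the system as a disjoint union
\[ (\Grig_2, \Vee) = \bigsqcup_{\alpha,\beta \in \{0,1\}} (\Grig_2, \Vee_{\alpha\beta}). \]
The four summands are genuinely disjoint clopen $\Grig_2$-invariant subsystems, so this is a union in the sense required by the union lemmas of Section~\ref{sec:BasicConstructions}, not merely a set-theoretic cover.

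Next I would recall that each summand $(\Grig_2, \Vee_{\alpha\beta})$ is topologically conjugate to a sofic shift by Lemma~\ref{lem:PiecesSofic}, which in turn rests on Barbieri's product simulation theorem together with the expansivity (Lemma~\ref{lem:IsExpansive}) and effectiveness of the pieces. The final step is then to apply the union lemma, Lemma~\ref{lem:SoficUnion}, which asserts precisely that a finite union of sofic shifts on a finitely generated group is sofic. Here $\Grig_2$ is finitely generated (being a finite-index subgroup of the finitely generated group $\Grig$), so the hypothesis of that lemma is met, and the conclusion that $(\Grig_2, \Vee)$ is topologically conjugate to a sofic shift follows immediately.

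The only point deserving slight care is the interface between the abstract disjoint-union decomposition and the soficity statement: Lemma~\ref{lem:SoficUnion} is stated for a union of sofic \emph{shifts}, i.e.\ subshifts of some common full shift $A^{\Grig_2}$, whereas Lemma~\ref{lem:PiecesSofic} gives conjugacy of each piece to a sofic shift. I would therefore first fix, for each $\alpha\beta$, a conjugacy of $(\Grig_2, \Vee_{\alpha\beta})$ to an actual sofic subshift, take the images to lie over disjoint alphabets (as in the remark following the disjoint-union lemma), and then regard the four conjugate copies as sofic subshifts inside a single full shift over the combined alphabet. Their union is then a sofic shift by Lemma~\ref{lem:SoficUnion}, and it is conjugate to $(\Grig_2, \Vee)$ because the clopen summands are separated by the alphabet. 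I do not expect any real obstacle here; the work has all been front-loaded into Lemma~\ref{lem:PiecesSofic} and the union lemma, and this statement is essentially their formal combination.
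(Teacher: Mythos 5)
Your proposal is correct and follows essentially the same route as the paper: decompose via Lemma~\ref{lem:IsUnion}, apply Lemma~\ref{lem:PiecesSofic} to each clopen piece, and combine with Lemma~\ref{lem:SoficUnion}. The interface issue you flag is already absorbed by the ``more generally'' clause of Lemma~\ref{lem:SoficUnion} (finite unions of SFT covered systems are SFT covered), so no extra work is needed.
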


\begin{proof}
By Lemma~\ref{lem:IsUnion}, $(\Grig_2, \Vee) \cong \bigsqcup_{\alpha, \beta} (\Grig_2, \Vee_{\alpha\beta})$. By Lemma~\ref{lem:PiecesSofic} each $(\Grig_2, \Vee_{\alpha\beta})$ is sofic. By Lemma~\ref{lem:SoficUnion}, $(\Grig_2, \Vee)$ is sofic.
\end{proof}

This is the first (and main) half of our main result:

\begin{lemma}
\label{lem:IsSofic}
The system $(\Grig, \Vee)$ is topologically conjugate to a sofic shift.
\end{lemma}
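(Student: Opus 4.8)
The plan is to promote the soficity of the finite-index subaction $(\Grig_2, \Vee)$, established in the previous lemma, up to the full group $\Grig$ by means of the finite-index lemma (Lemma~\ref{lem:FiniteIndex}), and then to rephrase the conclusion in the language of sofic shifts using expansivity. Essentially all of the substantive work has already been done: the pieces $(\Grig_2, \Vee_{\alpha\beta})$ were shown sofic through Barbieri's product simulation theorem (Theorem~\ref{thm:ProductSimulation}) and assembled by the union lemma (Lemma~\ref{lem:SoficUnion}). What remains is purely a descent-to-the-whole-group step.

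First I would recall that the previous lemma gives that $(\Grig_2, \Vee)$ is topologically conjugate to a sofic shift. By definition a sofic shift is a factor of an SFT, so this says precisely that the subaction $(\Grig_2, \Vee)$ admits an SFT cover. This is exactly the input required by Lemma~\ref{lem:FiniteIndex}.

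Next I would apply Lemma~\ref{lem:FiniteIndex} with $G = \Grig$, with finite-index subgroup $H = \Grig_2$, and with $X = X' = \Vee$. The hypotheses are immediate: $\Grig$ is finitely generated; $\Grig_2$ is a finite-index subgroup of $\Grig$; the set $\Vee$ is $\Grig$-invariant and hence in particular $\Grig_2$-invariant, so we may legitimately take $X' = X = \Vee$ (the case flagged in the remark after that lemma); trivially $\Grig \act \Vee = \Vee$; and the subaction $(\Grig_2, \Vee)$ admits an SFT cover by the preceding paragraph. The conclusion of Lemma~\ref{lem:FiniteIndex} is then that $(\Grig, \Vee)$ admits an SFT cover.

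Finally I would combine this with expansivity. By Lemma~\ref{lem:IsSubshift} the system $(\Grig, \Vee)$ is expansive, hence topologically conjugate to a subshift over a finite alphabet. A subshift admitting an SFT cover is by definition a factor of an SFT, that is, a sofic shift. Therefore $(\Grig, \Vee)$ is topologically conjugate to a sofic shift, as claimed. There is no real obstacle at this stage; the only point worth stating explicitly is the identification of ``admits an SFT cover'' with ``(conjugate to a) sofic shift'', which is exactly the definition of soficity together with the expansivity already proved.
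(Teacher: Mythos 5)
Your proof is correct and follows exactly the paper's own argument: invoke Lemma~\ref{lem:FiniteIndex} with $G = \Grig$, $H = \Grig_2$, $X = X' = \Vee$ to lift the SFT cover from the finite-index subaction, then use the expansivity from Lemma~\ref{lem:IsSubshift} to identify the result as a sofic subshift. No gaps.
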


\begin{proof}
Set $G = \Grig, H = \Grig_2, X = X' = \Vee$. Then $H$ is a finitely-generated finite-index subgroup of $G$, $(G, X)$ is a zero-dimensional system, and $(H, X') = (\Grig_2, \Vee)$ admits an SFT cover by the previous lemma. By Lemma~\ref{lem:FiniteIndex}, the system $(\Grig, \Vee) = (G, X)$ admits an SFT cover. By Lemma~\ref{lem:IsSubshift}, it is a subshift, thus by definition it is sofic.
\end{proof}

\section{The system $(\Grig, \Vee)$ is not an SFT}
\label{sec:VNotSFT}

We now prove the second half of our main result, that $(\Grig, \Vee)$ is not an SFT.

\begin{lemma}
\label{lem:IsNotSFT}
The system $(\Grig, \Vee)$ is not conjugate to a subshift of finite type.
\end{lemma}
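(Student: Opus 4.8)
\section*{Proof plan for Lemma~\ref{lem:IsNotSFT}}

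The plan is to use the characterization recalled above that a $\Grig$-subshift is an SFT if and only if it has the pseudo-orbit tracing property, and to show that $(\Grig,\Vee)$ fails this property. The conceptual reason is that the $\Z$-shift $(\sigma_\Vee,\Vee)$ is a minimal infinite subshift (Lemma~\ref{lem:MinimalNoPeriodic}), hence not a $\Z$-SFT, while $\sigma_\Vee$ lies in the topological full group of the jump action (as exploited in Lemma~\ref{lem:IsSubshift}); I want to convert the long-range, hierarchical nature of the $w_n$-decomposition into a family of $\Grig$-pseudo-orbits that no point can trace. First I would record a finite-range estimate. By Lemma~\ref{lem:IsSubshift}, $(\Grig,\Vee)$ is conjugate to a subshift $Y\subseteq A^\Grig$ via $x\mapsto\hat x$ with $\hat x_g$ a fixed-radius window of $g\act_\jump x$ around the origin; since each generator acts as an element of $\llbracket(\sigma_\Vee,\Vee)\rrbracket$ that displaces the origin by a bounded amount and reads a bounded neighbourhood, there is a constant $C$ such that any word of length $\ell$ displaces the origin by at most $C\ell$, and consequently two points of $\Vee$ agreeing on a long central window stay uniformly close throughout any bounded $\Grig$-neighbourhood after the action.

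Next I would construct the bad pseudo-orbits. Fix once and for all a small tracing constant $\epsilon_0=2^{-r_0}$, chosen so that $\epsilon_0$-closeness forces agreement of the finitely many symbols in a window of radius $r_0$. Given $\delta>0$ and a finite $F\Subset\Grig$, choose $M$ so large that $2^{-(M-C)}<\delta$ and that every element of $F$ has length below $M/C$. Because $(\sigma_\Vee,\Vee)$ is minimal and infinite, it is not a $\Z$-SFT, so there exist genuine points $x^-,x^+\in\Vee$ agreeing on $[-M,M]$ whose hierarchical $w_n$-decompositions (unique by the argument in the proof of Lemma~\ref{lem:VTCover}) are incompatible at some level $N$ with $2^N>M$; equivalently the glued word $y$ defined by $y_{(-\infty,-1]}=x^-_{(-\infty,-1]}$ and $y_{[0,\infty)}=x^+_{[0,\infty)}$ is not in $\Vee$, even though all its short subwords are legal. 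Define a pseudo-orbit $z:\Grig\to\Vee$ by $z(g)=g\act_\jump x^+$ whenever the jump action keeps the origin at a nonnegative position, and $z(g)=g\act_\jump x^-$ otherwise.

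The pseudo-orbit inequality is then routine to verify: for a pair $(g,sg)$ with $s\in F$ lying on the same side, $s\act z(g)=(sg)\act_\jump x^{\pm}=z(sg)$ exactly; and a single generator moves the origin by at most $C$, so the only pairs that switch sides have the origin within $C$ of the seam, where $x^+$ and $x^-$ agree on a window of radius $>M-C$, whence $d(s\act z(g),z(sg))\le 2^{-(M-C)}<\delta$. (Here one checks that near the seam the two displacement maps $g\mapsto$ origin-position for $x^+$ and for $x^-$ coincide, since the relevant trajectories read only the common window $[-M,M]$; this makes the ``side of the seam'' a genuine function of $g$.) Thus $z$ is a bona fide $(\delta,F)$-pseudo-orbit valued in $\Vee$. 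Finally, a tracer $\hat x$ would satisfy $d(g\act_\jump\hat x,z(g))<\epsilon_0$ for all $g$, forcing $g\act_\jump\hat x$ to agree with $g\act_\jump x^{+}$ (resp.\ $x^{-}$) on a radius-$r_0$ window while the origin is nonnegative (resp.\ negative). Letting $g$ drive the origin across all of $\Z$ — possible by minimality of $(\sigma_\Vee,\Vee)$, exactly as in the observation that $\Grig\cdot x\supseteq\langle\sigma_\Vee\rangle\cdot x$ — an induction along the orbit pins $\hat x$ to the glued word $y\notin\Vee$, contradicting $\hat x\in\Vee$. As $\epsilon_0$ is fixed while $\delta,F$ are arbitrary, pseudo-orbit tracing fails and $(\Grig,\Vee)$ is not an SFT.

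The step I expect to be the main obstacle is the rigorous construction of the pseudo-orbit together with the final non-membership step: one must make the seam-side bookkeeping well-defined as a function of $g\in\Grig$ rather than of a chosen word, control the point-dependent displacement map uniformly, and convert ``the tracer equals the glued word'' into a genuine contradiction. The cleanest resolution of the last point is to argue, in the style of the proof of Lemma~\ref{lem:PointwiseInV} and using aperiodicity (Lemma~\ref{lem:MinimalNoPeriodic}), that reconciling the two incompatible hierarchies would force either a point outside $\Vee$ or a $\sigma_\Vee$-periodic point, both of which are impossible.
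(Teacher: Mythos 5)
Your overall strategy---refuting the pseudo-orbit tracing property by manufacturing a pseudo-orbit out of configurations that are locally legal for $\Vee$ but globally illegal---is exactly the paper's strategy, and your final tracing-contradiction step (expansivity pins the tracer to a point outside $\Vee$) matches the paper's. But your pseudo-orbit itself has a genuine gap, and it sits precisely at the point you flag as ``the main obstacle.'' You define $z(g)$ to be $g\act_\jump x^+$ or $g\act_\jump x^-$ according to ``which side of the seam the origin is on,'' and you justify coherence of this bookkeeping by saying that near the seam the two displacement maps coincide because ``the relevant trajectories read only the common window $[-M,M]$.'' That is only true for group elements of word length at most about $M$. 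A pseudo-orbit must be defined on \emph{all} of $\Grig$, and for a long element $g$ whose trajectory on $x^+$ ends near the seam, the trajectory may in the meantime have wandered far outside $[-M,M]$, where $x^+$ and $x^-$ differ; hence the displacements $p^+(g)$ and $p^-(g)$ need not agree, and at a side-switch you are comparing $\sigma^{-p^+(sg)}(x^+)$ with $\sigma^{-p^-(sg)}(x^-)$ with no control on $p^+(sg)-p^-(sg)$. The $(\delta,F)$-estimate then fails. The alternative of defining the particle's position directly on the glued word $y$ requires the $\Grig$-action (not merely the $H=\Z_2*\Z_2^2$-action) to be well defined on the shift orbit of $y$, i.e.\ that \emph{every} Lysenok relator acts trivially at \emph{every} position of $y$; since the relators $\kappa^n((ad)^4)$, $\kappa^n((adacac)^4)$ have unbounded length, those longer than $M$ starting near the seam read incompatible data on the two sides, and triviality is not guaranteed. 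This is exactly the obstruction the paper calls out: because $\Grig$ is not finitely presented, local legality of a configuration does not yield a well-defined induced action on its orbit.

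The paper's workaround is to replace your glued word by the periodic configurations $x_n=(w_n\alpha)^\Z$ and to prove, by a bespoke algebraic argument, that $\Grig$ does act well-definedly on their shift orbits: one models the jump action on starrings of $\wideparen{(w_n\alpha)^2}$ as an action on $I\times\Z_2$ (with $I$ the starrings of $w_n$) and uses the palindromic structure of $w_n$ to show that any element acting trivially on starrings of $w_{n+1}=w_n\alpha w_n$ flips the extra bit an even number of times, hence also acts trivially on the circular words; this then yields the well-defined action on the orbit of $x_n$. If you want to rescue your write-up, you should replace the two-sided gluing by this periodic construction (or supply an independent proof that all Lysenok relators act trivially on the orbit of your $y$); the rest of your argument---approximating the resulting genuine orbit by points of $\Vee$ to get a $(\delta,F)$-pseudo-orbit, and using expansivity with constant $1/2$ to show a tracer would have to equal $x_n\notin\Vee$---then goes through as in the paper.
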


\begin{proof}
The idea is to show that there are shift-periodic points in arbitrarily good SFT approximations of $\Vee$ (meaning $\Z$-SFTs whose forbidden patterns are the words of length $n$ that do not appear in the language of $\Vee$), such that $\Grig$ still admits a well-defined action on their orbit. Then any set of points that locally look like these periodic points provides a pseudo-orbit that cannot be traced, since the action of $\Grig$ on $\Vee$ has no finite orbits and is expansive.

This is less trivial than it may appear. If $\Grig$ were finitely presented, it would suffice to take any good enough SFT approximation so that all the relations are ``visible'', and then the action of $\Grig$ would be well-defined on this entire SFT -- since $\Vee$ is minimal, all SFT approximations are transitive, thus periodic points are dense in them. The problem with this is that, since $\Grig$ is not finitely-presented, we do not see a general reason why such periodic orbits should exist. Below, we explain how to get them by an ad hoc argument, by modifying the words $w_n$ we used to define $\Vee$. 

Write $\wideparen{w}$ for the circular version of a word $w$, where we imagine the left and right ends are glued together. There is a natural analog of the set of SAWs $W$, $\wideparen{W}$, where the requirements are those of $W$, but we require them for all rotations of the word (equivalently, the constraint on adjacent letters is extended to the pair formed of the first and last letters), and additionally the star cannot be at the very end of the word, the point being that thinking of the word as circular, this is the same as putting the star in the beginning of the word. Similarly, we can talk about circular AWs and their starrings. We extend the jump action to $\wideparen{W}$ in the obvious way.

We claim that $\Grig$ admits a well-defined action on the starrings of $\wideparen{(w_n \alpha)^2} = \wideparen{w_n \alpha w_n \alpha}$ where $w_{n+1} = w_n \alpha w_n$. To see this, we think of the action of $H = \Z_2 * \Z_2^2$ on $\wideparen{(w_n \alpha)^2}$ as follows: Letting $I$ be the star positions of $w_n$ (including the last positions; $|I| = 2^n$), we act on $I \times \Z_2$; the action factors onto the natural action on $I$, and the action on the bit $b \in \Z_2$ is that at each end of $I$, some of the generators increase $b$ by $1$. This description makes sense because $w_n$ is a palindrome, simply imagine that the bit keeps track of which copy of $w_n$ we are in, and imagine that the rightmost copy of $w_n$ is reversed.

Next, we give an alternative description of the jump action on $w_{n+1} = w_n \alpha w_n$. Again we can think of the group as acting on $I \times \Z_2$. The action on $I$ is the exact same, but now we flip the $\Z_2$ bit if and only if we are at the right end of $w_n$.

Now suppose that a group element $g \in H$ acts trivially on the starrings of $w_n \alpha w_n$. Then in the $I \times \Z_2$ point of view, if we start at $(i, b)$ or $(2^n - 1 - i, b)$ we in particular flip the bit $b$ an even number of times. Observe that $i \mapsto 2^n - 1 - i$ is an automorphism of the action on the starrings of $w_n$, again because $w_n$ is a palindrome, so we can interpret the action on pairs $(2^n - 1 - i, b)$ as the action on $I \times \Z_2$ but now the bit $b$ is flipped when we are at the \emph{left} end of $I$. All in all, the fact we fix $(i, b)$ (resp.\ $(2^n - 1 - i, b)$) means that on $i \in I$, the action of $g$ flips $b$ on the left (resp.\ right) an even number of times. In particular, in the first action we described on $I \times \Z_2$ (corresponding to the jump action on $\wideparen{w_n \alpha w_n \alpha}$) the bit is flipped an even number of times (because $\mbox{even} + \mbox{even} = \mbox{even}$).

Note that it immediately follows that $\Grig$ also has a well-defined action on $\wideparen{w_n \alpha}$. Namely, $(H, \wideparen{w_n \alpha})$ is a quotient of $(H, \wideparen{(w_n \alpha)^2})$ under taking the star position modulo $2^n$, and if $g \in H$ acts trivially on $\wideparen{(w_n \alpha)^2}$ then in particular it acts trivially when we identify some positions.




Let $X = \{a, B, C, D\}^\Z$. Observe that although $x_n = (w_n \alpha)^\Z \in X$ is not in $\Vee$, $\Grig$ admits a well-defined action on its shift orbit. Namely, the action of $H$ on this orbit is exactly the same as that on $\wideparen{w_n \alpha}$, where we already established $\Grig$ has a well-defined action.

Observe that $x_n$ is in the $2^n$th SFT approximation of $\Vee$ seen as a $\Z$-subshift, meaning the words of length $2^n$ and less are taken from $\Vee$. This is clear, as they all appear in the word $w_{n+1}$, which in turn appears (syndetically) in all points of $\Vee$.

Because the action of $\Grig$ is well-defined on the orbit $O(x_n) = \{\sigma^k(x) \;|\; k \in \Z\}$, we can associate to $g \in \Grig$ the point $g \act_{\jump} x_n$, to obtain a map $\phi : \Grig \to O(x_n) \subset X$. Since each shift of $x_n$ is in the $2^n$th SFT approximation, we can pick for each $\phi(g) = g \act_{\jump} x_n$ an approximating point $y_g \in \Vee$ with $d(y_g, \phi(g)) \leq 2^{-n+1}$ in the Cantor metric of $X$.
 
Then because $\phi$ is an actual orbit, it follows from the triangle inequality of $X$ that $g \mapsto y_g$ is a $2^{-n+3}$-pseudo-orbit: for $\alpha$ one of the natural generators, we have
\begin{align*}
d(\alpha \act_{\jump} y_g, y_{\alpha g})
&\leq d(\alpha \act_{\jump} y_g, \alpha \act_{\jump} \phi(g)) + d(\alpha \act_{\jump} \phi(g), \phi(\alpha g)) + d(\phi(\alpha g), y_{\alpha g}) \\
&\leq 2^{-n+2} + 0 + 2^{-n+1} \\
&\leq 2^{-n+3}
\end{align*}
Where we note that the action of $\alpha$ either behaves trivially or shifts the points by $\pm 1$, which by our choice of metric on Cantor space can at most double distances.

We claim that this pseudo-orbit $(y_g)_{g \in \Grig}$ is not $1/2$-traced by any point of $\Vee$ for sufficiently large $n$. Namely suppose it were $1/2$-traced by some $z$. Then, by the definition of $1/2$-tracing and the choice of the $y_g$ we would have
\[ (g \act_{\jump} z)_0 = (y_g)_0 = (g \act_{\jump} x_n)_0. \]
The jump action of $\Grig$ on $\Vee$ is expansive, and it is easy to see that $1/2$ is an expansivity constant, i.e.\ we can shift any position of $z$ to the origin. From this we conclude that $z = x_n$. Since $x_n \notin \Vee$, this is a contradiction. So the system $(\Grig, \Vee)$ does not have the pseudo-orbit tracing property.
\end{proof}

As a ``sanity check'' we checked that the action of the group $\Grig$ on starrings of $\widehat{(w_n \alpha)^2}$ is indeed well-defined. In fact, we tested, for each $n \in [1, 6]$ and each $p \in [1, 20]$ whether the elements in $R_t$ act as identity in the $H$-action on starrings of $\widehat{(w_n \alpha)^p}$:
\[ R_t = \{a^2, b^2, c^2, d^2, bcd, \kappa^k((ad)^4), \kappa^k((adacac)^4) \;|\; 1 \leq k \leq t \} \]
Here, $\kappa$ is the substitution $\kappa(a) = aca, \kappa(b) = d, \kappa(c) = b, \kappa(d) = c$. These are so-called Lysenok relations, and $\bigcup_t R_t$ is a presentation of the group $\Grig$.

The table agrees with our result above that there is a well-defined action on starrings of $\wideparen{(w_n \alpha)^2}$. This table, and the tables where we checked the same for smaller $t$, also suggest some other things, which we did not try to prove: the action of the group $\Grig$ is well-defined on $\wideparen{(w_n \alpha)^4}$ and $\wideparen{(w_n \alpha)^8}$, but no higher powers. Furthermore, if we check only the relations $R_t$, then the first $t$ rows of the table look correct (having $1$ in slots $1, 2, 4, 8$) and the $(t+1)$th row is all $1$, i.e.\ we are not able to detect a contradiction.

\begin{table}
\begin{center}
\begin{tabular}{|l|c|c|c|c|c|c|c|c|c|c|}
\hline
$n$ $\backslash$ $p$      & 1 & 2 & 3 & 4 & 5 & 6 & 7 & 8 & 9 & 10--50 \\
\hline
$1$            & 1 & 1 & 0 & 1 & 0 & 0 & 0 & 1 & 0 & 0  \\
$2$            & 1 & 1 & 0 & 1 & 0 & 0 & 0 & 1 & 0 & 0  \\
$3$            & 1 & 1 & 0 & 1 & 0 & 0 & 0 & 1 & 0 & 0  \\
$4$            & 1 & 1 & 0 & 1 & 0 & 0 & 0 & 1 & 0 & 0  \\
$5$            & 1 & 1 & 0 & 1 & 0 & 0 & 0 & 1 & 0 & 0  \\
$6$            & 1 & 1 & 0 & 1 & 0 & 0 & 0 & 1 & 0 & 0  \\
\hline
\end{tabular}
\end{center}
\caption{Row $n$, column $p$ contains $1$ if initial Lysenok relations hold for the $H$-action on $\widehat{w_n \alpha}^p$.}
\label{tab:Test}
\end{table}

\section{The system $(\Grig, \Voro)$ is a proper sofic shift}
\label{sec:Vorobets}

We show that the system $(\Grig, \Voro)$ is also a proper sofic shift. We will deduce this entirely abstractly from our results from $(\Grig, \Vee)$.  
The system $(\Grig, \Voro)$ is obtained by quotienting the system $(\Grig, \Vee)$ by the orbit relation of the reversal map $f : \Vee \to \Vee$ defined by $f(x)_i = x_{-1-i}$. 

The reversal $f$ gives a free continuous $\Z_2$-action on $X$, and thus the quotient map from $X$ to $X/f$ is exactly $2$-to-$1$, i.e.\ every point has exactly $2$ preimages. It is a general fact that a quotient map by the orbit relation of a continuous finite group action is open. Namely let $Y = X/H$ for a finite group $H$. If $U \subset X$ is open, then the full $\phi^{-1}$-preimage of its $\phi$-image is open, because $\phi^{-1}(\phi(U)) = \bigcup_{h \in H} hU$. By the definition of a quotient map, this implies $\phi(U)$ is open.

We start with two basic topological lemmas.

\begin{lemma}
\label{lem:Separated}
Let $X, Y$ be metric spaces and let $\phi : X \to Y$ be a constant-to-$1$ open continuous map. Let $X$ be compact. Then there exists $\epsilon' > 0$ such that $\phi(x) = \phi(x')$ and $x \neq x'$ imply $d(x, x') \geq \epsilon'$.
\end{lemma}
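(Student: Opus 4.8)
The plan is to argue by contradiction, extracting a pair of collapsing fiber-points and using \emph{openness together with the constant-to-$1$ hypothesis} to rule out such a collapse. First I would denote by $k$ the common (finite) cardinality of all fibers of $\phi$, and suppose for contradiction that no separation constant $\epsilon'$ exists. Then for each $n$ there are points $x_n \neq x_n'$ with $\phi(x_n) = \phi(x_n')$ and $d(x_n, x_n') < 1/n$. Using compactness of $X$, I would pass to a subsequence along which $x_n \to x$ (and hence $x_n' \to x$ as well, since $d(x_n, x_n') \to 0$). Writing $y_0 = \phi(x)$ and $y_n = \phi(x_n) = \phi(x_n')$, continuity gives $y_n \to y_0$.

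Next I would set up the local picture at the limit fiber. The fiber $\phi^{-1}(y_0)$ consists of exactly $k$ distinct points $z_1, \ldots, z_k$, and I may take $z_1 = x$. Choose pairwise disjoint open balls $U_1, \ldots, U_k$ with $z_i \in U_i$. Since $\phi$ is open, each image $\phi(U_i)$ is an open set containing $y_0$, so $V = \bigcap_{i=1}^{k} \phi(U_i)$ is an open neighborhood of $y_0$. The key claim is that for every $y \in V$, the fiber $\phi^{-1}(y)$ meets each $U_i$: indeed $y \in \phi(U_i)$ means precisely that some preimage of $y$ lies in $U_i$. As the $U_i$ are disjoint, this produces $k$ distinct points of $\phi^{-1}(y)$, which already exhausts the fiber by the constant-to-$1$ hypothesis; hence $\phi^{-1}(y)$ contains \emph{exactly one} point in each $U_i$, and in particular exactly one point in $U_1$.

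Finally I would derive the contradiction. Since $x_n, x_n' \to z_1 \in U_1$ and $U_1$ is open, for all large $n$ both $x_n$ and $x_n'$ lie in $U_1$; and since $y_n \to y_0 \in V$ with $V$ open, for all large $n$ we have $y_n \in V$. But then $\phi^{-1}(y_n)$ contains the two distinct points $x_n \neq x_n'$ inside $U_1$, contradicting the conclusion of the previous paragraph that $\phi^{-1}(y_n)$ has exactly one point in $U_1$. This contradiction shows the desired $\epsilon'$ exists.

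I expect the only genuinely delicate step to be the claim that fibers over nearby points meet each $U_i$ in exactly one point: this is exactly where both hypotheses are indispensable—openness forces at least one preimage in each $U_i$, while constant-to-$1$ forces it to be exactly one. Everything else (the compactness extraction, the collapse of the two limits, and the final neighborhood bookkeeping) is routine, so no lengthy estimates are needed; one only has to be careful that $k$ is genuinely constant and finite, which is built into the meaning of ``constant-to-$1$''.
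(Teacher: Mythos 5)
Your proof is correct and follows essentially the same strategy as the paper's: argue by contradiction, use compactness of $X$ to extract a collapsing pair converging to a point of some limit fiber, and use openness plus the constant-to-$1$ hypothesis to contradict the fiber count. The only (cosmetic) difference is in the final counting step: the paper passes to a diagonal limit of full $k$-tuples of preimages and exhibits a $(k+1)$-st preimage of nearby points, whereas you surround the limit fiber by disjoint neighborhoods, show each nearby fiber meets each neighborhood exactly once, and then find two preimages in one neighborhood --- both are valid, and yours is if anything a little cleaner.
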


\begin{proof}
Assume $\phi$ is (exactly) $k$-to-$1$. If such $\epsilon' > 0$ did not exist, then we could find $y_i \in Y$ with preimages (exactly) $x_i^1, \ldots, x_i^k$ such that $d(x_i^1, x_i^2) < 1/i$ (possibly after reordering). Using compactness of $X^k$ we can take a diagonal limit point of the sequence of $k$-tuples $(x_i^j)_j$ as $i \rightarrow \infty$ to get preimages $(x^1, \ldots, x^k)$ for some $y$, such that $x^1 = x^2$. Note that $y = \lim_i y_i$ by continuity of $\phi$. Since there are at most $k$ distinct preimages in this tuple, we find an additional preimage $x^{k+1} \in X$. Pick a small neighborhood for $x^{k+1}$ whose closure does not contain any of the points $x^j$ with $j \leq k$. By openness of $\phi$, for any large enough $i$, $y_i$ has a preimage in $U$. For large enough $i$, these preimages differ from the preimages $(x_i^1, \ldots, x_i^k)$, so $y_i$ has at least $k+1$ preimages.
\end{proof}

The following is known in much greater generality, we give a specific proof.

\begin{lemma}
\label{lem:Continuous}
Let $X, Y$ be metric spaces and let $\phi : X \to Y$ be a constant-to-$1$ open continuous map. Then the map $y \mapsto \phi^{-1}(y)$ is continuous with respect to the Hausdorff metric.
\end{lemma}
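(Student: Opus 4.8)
The plan is to prove the sharper statement that for each $y$ and each sufficiently small $\epsilon$ there is a $\delta$ so that every $y'$ within $\delta$ of $y$ has its fiber in $\epsilon$-correspondence with the fiber of $y$. Write $k$ for the constant value of $\phi$, so every fiber consists of exactly $k$ points. First I would invoke Lemma~\ref{lem:Separated} to fix $\epsilon' > 0$ such that distinct points lying in a common fiber are at distance at least $\epsilon'$; this uniform separation is what will let me keep the $k$ preimages of a nearby point apart.

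Fix $y \in Y$ and write $\phi^{-1}(y) = \{x^1, \dots, x^k\}$, a set of $k$ points with pairwise distances at least $\epsilon'$. Given a target $\epsilon > 0$, shrink it if necessary so that $\epsilon < \epsilon'/2$; then the open balls $B(x^i, \epsilon)$ are pairwise disjoint. The essential use of openness is now the following: each image $\phi(B(x^i, \epsilon))$ is an open subset of $Y$ containing $y$, so I can choose $\delta > 0$ with $B(y, \delta) \subseteq \bigcap_{i=1}^k \phi(B(x^i, \epsilon))$.

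Suppose now $d(y, y') < \delta$. By the choice of $\delta$, for each $i$ the point $y'$ has a preimage lying in $B(x^i, \epsilon)$. Since the balls $B(x^i, \epsilon)$ are pairwise disjoint, these $k$ preimages are distinct; and as $\phi$ is exactly $k$-to-$1$, they already exhaust $\phi^{-1}(y')$. Consequently every point of $\phi^{-1}(y')$ lies within $\epsilon$ of some $x^i$, and every $x^i$ has a point of $\phi^{-1}(y')$ within $\epsilon$. By the definition of the Hausdorff metric this yields $d_H(\phi^{-1}(y), \phi^{-1}(y')) \le \epsilon$, which is precisely continuity of $y \mapsto \phi^{-1}(y)$ at $y$.

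The step I expect to be the crux is the upper (``no new points'') half of the $\epsilon$-matching: openness alone only guarantees that each $x^i$ has a nearby preimage of $y'$, i.e.\ that no point of $\phi^{-1}(y)$ is abandoned. That no \emph{additional} faraway preimage of $y'$ can appear is not a formal consequence of openness and continuity; it is forced by the counting argument above, which relies essentially on $\phi$ being \emph{constant}-to-$1$ (rather than merely finite-to-$1$) together with the uniform separation coming from Lemma~\ref{lem:Separated}. Dropping the constant-to-$1$ hypothesis would allow the fiber cardinality to jump upward in the limit, and the assignment $y \mapsto \phi^{-1}(y)$ would fail to be upper semicontinuous.
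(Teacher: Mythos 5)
Your proof is correct, and it is a genuinely cleaner route than the one in the paper. The paper argues by contradiction with a sequence $y_i \to y$: it extracts a preimage $x^1$ of $y$ that stays $\epsilon$-far from all preimages of the $y_i$, and then uses openness to manufacture an extra preimage of $y_i$ near $x^1$, contradicting exact $k$-to-$1$-ness. That argument only explicitly treats one half of the Hausdorff estimate (an ``abandoned'' preimage of $y$); the other half -- a preimage of $y_i$ far from every preimage of $y$ -- is not addressed and would require a separate compactness argument. Your direct proof handles both halves at once: the disjoint balls $B(x^i,\epsilon)$ together with openness force $k$ distinct preimages of $y'$, one in each ball, and the exact count then shows these exhaust $\phi^{-1}(y')$, so no faraway preimage can exist. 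You correctly identify this counting step as the crux; it is the same mechanism the paper uses, just deployed directly rather than inside a contradiction. One small improvement you could make: you do not actually need Lemma~\ref{lem:Separated} (and hence do not need $X$ compact) -- for the fixed point $y$, the fiber $\phi^{-1}(y)$ is a finite set of distinct points, so its minimum pairwise distance is already positive, and that suffices to make the balls disjoint. As written your argument silently imports the compactness hypothesis of Lemma~\ref{lem:Separated}, which the statement of the present lemma does not assume.
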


\begin{proof}
Suppose this map were not continuous at some $y \in Y$, and let $y$ have (exactly) preimages $x^1, \ldots, x^k$. By discontinuity at this point, and the definition of the Hausdorff metric, there exists $\epsilon > 0$ such that for each $i$ we find $y_i$ such that $d(y_i, y) < 1/i$ and $y_i$ has (exactly) preimages $x_i^1, \ldots, x_i^k$, so that (possibly after renumbering) $x_1$ is not in the $\epsilon$-neighborhood of any of the points $x_i^j$. As in the previous proof, by openness, in a sufficiently small neighborhood of $x_1$ we can find preimages for $y^i$ for large enough $i$, so the map is not $k$-to-$1$.
\end{proof}

We next prove two results that show respectively that a nice enough cover preserves the property of being SFT, and a nice enough factor preserves expansivity. (Actually by similar proofs one can show that for such factor maps, the properties are preserved in both directions, i.e.\ factors preserves the SFT property and covers preserve expansivity.)

For the first result, we do not know if this result is directly in the literature, but for a related result on $\Z$, we cite \cite{BlHa91}.

\begin{lemma}
\label{lem:NoPOTP}
Let $G$ be a group generated by a finite set $S$. Let $(G,X)$ and $(G,Y)$ be group actions on compact metrizable topological spaces $X, Y$. Suppose $\phi : X \to Y$ is an open constant-to-$1$ factor map. If $Y$ has the pseudo-orbit tracing property, then so does $X$.
\end{lemma}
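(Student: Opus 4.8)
The plan is to transport a pseudo-orbit $z : G \to X$ downward to $Y$ via $\phi$, trace it there using the pseudo-orbit tracing property of $Y$, and then lift the tracing point back along a single, consistently chosen sheet of the covering. Throughout I write $k$ for the constant fiber cardinality of $\phi$ and fix the separation constant $\epsilon' > 0$ from Lemma~\ref{lem:Separated}, so distinct points in a common fiber are at distance at least $\epsilon'$. We may assume the generating set $S$ is symmetric and will take $F \supseteq S$. Since all spaces are compact, the map $\phi$, each action map $p \mapsto s \act p$ for $s \in S$, and the fiber map $y \mapsto \phi^{-1}(y)$ of Lemma~\ref{lem:Continuous} are uniformly continuous, and I will use their moduli of continuity freely.

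Fix a target accuracy $\epsilon > 0$; we may assume $\epsilon \le \epsilon'/2$. First I choose a working accuracy $\beta$ with $0 < \beta \le \epsilon$ small enough that $d(p,q) < \beta$ implies $d(s \act p, s \act q) < \epsilon'/4$ for every $s \in S$. Next I choose the tracing accuracy $\epsilon_Y > 0$ to be used in $Y$ small enough that $d(y_1, y_2) < \epsilon_Y$ forces the fibers $\phi^{-1}(y_1)$ and $\phi^{-1}(y_2)$ to lie within Hausdorff distance $\beta$, which is possible by Lemma~\ref{lem:Continuous} and compactness. Invoking the pseudo-orbit tracing property of $Y$ with accuracy $\epsilon_Y$ yields $\delta_Y > 0$ and a finite $F_Y \Subset G$. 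Finally I pick $\delta > 0$ with $\delta < \epsilon'/4$ and small enough that $d(p,q) < \delta$ implies $d(\phi(p), \phi(q)) < \delta_Y$, and set $F = F_Y \cup S$.

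Now let $z$ be any $(\delta, F)$-pseudo-orbit for $(G,X)$. Since $\phi$ is equivariant, the bound $d(s \act z(g), z(sg)) < \delta$ gives $d(\phi(s \act z(g)), \phi(z(sg))) < \delta_Y$, i.e.\ $d(s \act \phi(z(g)), \phi(z(sg))) < \delta_Y$, so $\phi \circ z$ is a $(\delta_Y, F_Y)$-pseudo-orbit in $Y$ and is therefore $\epsilon_Y$-traced by some $y \in Y$. Write the fiber of $y$ as $\{x^1, \dots, x^k\}$; by equivariance the fiber of $g \act y$ is exactly $\{g \act x^1, \dots, g \act x^k\}$ (these are distinct fiber elements since the action is injective), and the sheet label $i$ is preserved by the action in the sense that $s$ carries $g \act x^i$ to $(sg) \act x^i$. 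For each $g$, the bound $d(g \act y, \phi(z(g))) < \epsilon_Y$ and the choice of $\epsilon_Y$ produce a fiber element $g \act x^{j(g)}$ with $d(g \act x^{j(g)}, z(g)) < \beta \le \epsilon'/2$; by the separation constant $\epsilon'$ this index $j(g)$ is the \emph{unique} one whose fiber element lies within $\epsilon'/2$ of $z(g)$.

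The heart of the argument, and the step I expect to be the main obstacle, is showing that $g \mapsto j(g)$ is constant. For $s \in S$ and any $g$, applying the action of $s$ to $d(g \act x^{j(g)}, z(g)) < \beta$ gives $d((sg) \act x^{j(g)}, s \act z(g)) < \epsilon'/4$ by the choice of $\beta$, and combining with the pseudo-orbit bound $d(s \act z(g), z(sg)) < \delta < \epsilon'/4$ yields $d((sg) \act x^{j(g)}, z(sg)) < \epsilon'/2$. By the uniqueness just established (applied at $sg$), $(sg) \act x^{j(g)}$ must be \emph{the} fiber element of $(sg) \act y$ within $\epsilon'/2$ of $z(sg)$, so $j(sg) = j(g)$. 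As $S$ is a symmetric generating set, induction on word length shows $j$ is constant, say $j \equiv j_0$. Then $x := x^{j_0}$ satisfies $d(g \act x, z(g)) = d(g \act x^{j(g)}, z(g)) < \beta \le \epsilon$ for every $g$, so $x$ $\epsilon$-traces $z$. This exhibits the required $\delta$ and $F$, proving that $(G,X)$ has the pseudo-orbit tracing property.
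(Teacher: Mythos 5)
Your proof is correct and follows essentially the same route as the paper's: push the pseudo-orbit down to $Y$, trace it there, and lift the tracing point back using the $\epsilon'$-separation of fibers (Lemma~\ref{lem:Separated}) together with the Hausdorff continuity of $y \mapsto \phi^{-1}(y)$ (Lemma~\ref{lem:Continuous}), propagating the choice of preimage along the generators. The only difference is bookkeeping: the paper fixes a single preimage near $z(1_G)$ and inductively verifies the tracing estimate on balls $B_r$, whereas you define a sheet index $j(g)$ at every $g$ and show it is constant via the same separation argument.
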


\begin{proof}
The idea is simply to map a pseudo-orbit $x$ from $X$ to $Y$ by using the factor map, trace the resulting pseudo-orbit $y$ in $Y$ by an actual orbit of some $y' \in Y$ using the pseudo-orbit tracing property of $Y$, and then pick a preimage $x'$ of the tracing point that is close to the preimage of $y(1)$ (using the continuity of $\phi^{-1}$ from the previous lemma). It then follows automatically from the separatedness of the $\phi^{-1}$ preimages (from Lemma~\ref{lem:Separated}) that $x'$ traces $x$.

We now prove this in detail. Let $\epsilon > 0$. We show that for some $\delta > 0$, $X$ $\epsilon$-traces $(\delta, S)$-pseudo-orbits.

Assume that the set of $\phi$-preimages of any $y \in Y$ is $\epsilon' > 0$ separated pairwise (using Lemma~\ref{lem:Separated}).  We may assume $\epsilon$ is small (since if $x \in X$ $\epsilon$-traces a pseudo-orbit, it also $\gamma$-traces it for any $\gamma > \epsilon$). In particular, we may assume $\epsilon$ is small enough that $d(x, x') < \epsilon$ implies $d(sx, sx') < \epsilon'/3$ for all $s \in S \cup \{1\}$.

Let now $\epsilon_0 > 0$ be such that if $y, y' \in Y$ and $d(y, y') < \epsilon_0$, then we have $d(\phi^{-1}(y), \phi^{-1}(y')) < \epsilon$ in Hausdorff metric (which exists by Lemma~\ref{lem:Continuous}). 
Let $\delta_0 > 0$ be such that $Y$ $\epsilon_0$-traces $(\delta_0, S)$-pseudo-orbits. Let $\delta > 0$ be such that $d(x, x') < \delta \implies d(\phi(x), \phi(x')) < \delta_0$ for $x, x' \in X$. We may assume also $\delta < \epsilon'/3$.

Now let $x : G \to X$ be a $(\delta, S)$-pseudo-orbit. Define $y : G \to Y$ by $y(g) = \phi(x(g))$. Then $d(sy(g), y(sg)) = d(s\phi(x(g)), \phi(x(sg))) = d(\phi(sx(g)), \phi(x(sg))) < \delta_0$ for all $s \in S$, by the choice of $\delta$ and the fact $x$ is a pseudo-orbit. Then there is an actual orbit of some $y' \in Y$ that $\epsilon_0$-traces $y$, i.e.\ $d(gy', y(g)) < \epsilon_0$ for all $g \in G$. 

Since $d(y(1), y') < \epsilon_0$, we have $d(\phi^{-1}(y(1)), \phi^{-1}(y')) < \epsilon$ in Hausdorff metric, and thus we can pick $x' \in \phi^{-1}(y')$ so that $d(x(1), x') < \epsilon$. We prove by induction that for all $r$, $d(gx', x(g)) < \epsilon$ for all $g \in B_r$ (where $B_r$ is the ball of radius $r$ with respect to generators $S$). This is true for $r = 0$ by the choice of $x'$. Now suppose it is true for $g \in S^r$, and consider $s \in S$.

We have $d(sgy', y(sg)) < \epsilon_0$. The points $sgy', y(sg)$ have at least the preimages $sgx', x(sg)$ respectively. By the inductive assumption $d(gx', x(g)) < \epsilon$, from which $d(sgx', sx(g)) < \epsilon'/3$ by the choice of $\epsilon$, and then
\[ d(sgx', x(sg)) \leq d(sgx', sx(g)) + d(sx(g), x(sg)) < 2\epsilon'/3 \]
since $d(sx(g), x(sg)) < \delta < \epsilon'/3$ (since $x$ is a $(\delta, S)$-pseudo-orbit).

Since $d(\phi^{-1}(y(sg)), \phi^{-1}(sgy')) < \epsilon$ in Hausdorff metric (again by the relation of $\epsilon_0$ and $\epsilon$), there is a $\phi$-preimage $z$ for $y(sg)$ at distance less than $\epsilon$ from the point $sgx' \in \phi^{-1}(sgy')$. We claim that this preimage must be $z = x(sg)$. If not, then we have
\[ 0 < d(z, x(sg)) \leq d(z, sgx') + d(sgx', x(sg)) < \epsilon + 2\epsilon'/3 < \epsilon' \]
contradicting the assumption that $\phi^{-1}(y(sg))$ is $\epsilon'$-separated.

This shows that $d(x(sg), sgx') < \epsilon$, concluding the inductive step since $sg$ enumerates $B_{r+1}$ as we range over $g \in B_r, s \in S$. Since $\bigcup_r B^r = G$, this shows $d(gx', x(g)) < \epsilon$ for all $g \in G$, so indeed $x'$ $\epsilon$-shadows $x$, finishing the proof.
\end{proof}

\begin{lemma}
\label{lem:Expansive}
Let $G$ be a group generated by a finite set $S$. Let $(G,X)$ and $(G,Y)$ be group actions on compact metrizable topological spaces $X, Y$. Suppose $\phi : X \to Y$ is an open constant-to-$1$ factor map. If $X$ is expansive, then so is $Y$.
\end{lemma}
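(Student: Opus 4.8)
The plan is to imitate the proof of Lemma~\ref{lem:NoPOTP} almost verbatim, with the choice of a matched preimage playing the role that the tracing point did there. First I would fix a single constant $c>0$ that simultaneously serves as an expansivity constant for $(G,X)$ and as the separation constant provided by Lemma~\ref{lem:Separated} (simply replace the two by their minimum). Since expansivity is a topological property, invariant under passing to a topologically equivalent metric, I would then replace the metric on $Y$ by the pullback $d_{\mathrm H}(y,y')=\mathrm{Haus}\big(\phi^{-1}(y),\phi^{-1}(y')\big)$, where $\mathrm{Haus}$ is the Hausdorff metric on closed subsets of $X$; this $d_{\mathrm H}$ is a compatible metric on $Y$ by Lemma~\ref{lem:Continuous} together with the injectivity of $y\mapsto\phi^{-1}(y)$, and with respect to it the statement ``$y,y'$ are $\eta$-close'' literally means ``their fibers are $\eta$-Hausdorff-close''. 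Using the finite generating set $S$ and the uniform continuity of the finitely many homeomorphisms $x\mapsto sx$ of the compact space $X$, I would fix $\eta>0$ so small that $\eta<c/4$ and $d(p,q)\le\eta\Rightarrow d(sp,sq)\le c/4$ for all $s\in S$, and claim that $\eta$ is an expansivity constant for $Y$.

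To prove the claim, suppose towards a contradiction that $y\neq y'$ yet $d_{\mathrm H}(gy,gy')\le\eta$ for all $g\in G$. Fix a preimage $x_1\in\phi^{-1}(y)$. Since the two fibers are $\eta$-close and, by Lemma~\ref{lem:Separated}, the points of a fiber are pairwise at distance $\ge c>\eta$, there is a \emph{unique} $v\in\phi^{-1}(y')$ with $d(x_1,v)\le\eta$. The heart of the argument is to show, by induction on the word length of $g$ over $S$ (exactly as in Lemma~\ref{lem:NoPOTP}), that $d(gx_1,gv)\le\eta$ for all $g$. The base case is the choice of $v$. For the inductive step, assume $d(gx_1,gv)\le\eta$ and take $s\in S$. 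By equivariance $sgv\in\phi^{-1}(sgy')$, and the standing hypothesis $d_{\mathrm H}(sgy,sgy')\le\eta$ produces some $w\in\phi^{-1}(sgy')$ with $d(sgx_1,w)\le\eta$. By the choice of $\eta$ we have $d(sgx_1,sgv)\le c/4$, hence $d(w,sgv)\le d(w,sgx_1)+d(sgx_1,sgv)\le\eta+c/4<c$; since $w$ and $sgv$ both lie in the fiber $\phi^{-1}(sgy')$, whose points are $\ge c$ apart, this forces $w=sgv$, and therefore $d(sgx_1,sgv)=d(sgx_1,w)\le\eta$, closing the induction.

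This yields $d(gx_1,gv)\le\eta<c$ for all $g\in G$, while $x_1\neq v$ because the fibers $\phi^{-1}(y)$ and $\phi^{-1}(y')$ are disjoint (as $y\neq y'$). This contradicts the $c$-expansivity of $(G,X)$, so in fact $y=y'$, proving that $Y$ is $\eta$-expansive. The step I expect to be the crux is precisely this inductive propagation of the matched lift: a priori the closest preimage of $gy'$ to $gx_1$ could jump from $gv$ to a different point of the fiber as $g$ varies, and the content of the induction is that under the standing (ultimately false) assumption this never happens, so that a genuine pair $x_1\neq v$ which is never $c$-separated would have to exist. What makes the induction close, with the \emph{same} $\eta$ at every step rather than a degrading bound, is the interplay between the one-step distortion estimate coming from uniform continuity and the \emph{uniform} $c$-separation of fibers from Lemma~\ref{lem:Separated}, which re-tightens the estimate back to $\eta$ after each generator application --- exactly the way the $\epsilon'$-separatedness of preimages was exploited in Lemma~\ref{lem:NoPOTP}. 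The same scheme, run in the opposite direction, should also give the converse preservation statements mentioned after the lemma.
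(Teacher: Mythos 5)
Your proof is correct and follows essentially the same route as the paper's: pick a common constant for expansivity of $X$ and fiber separation, lift a putative non-expansive pair in $Y$ to a matched pair of preimages, and propagate their closeness by induction over word length, using the $\epsilon'$-separation of fibers to force the nearby preimage of $sgy'$ to be exactly $sgv$. Your only (harmless) variation is to pull back the Hausdorff metric to $Y$ rather than choosing a $\delta$ in the original metric via Lemma~\ref{lem:Continuous}; this is legitimate since $y\mapsto\phi^{-1}(y)$ is a continuous injection from a compact space and expansivity is independent of the choice of compatible metric.
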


\begin{proof}
By Lemma~\ref{lem:Separated}, there exists $\epsilon' > 0$ such that for any $y \in Y$, $\phi^{-1}(y)$ is pairwise $\epsilon'$-separated. Let $\epsilon > 0$ be an expansivity constant for $X$. We may assume $\epsilon$ is small enough that $d(x, x') < \epsilon$ implies $d(sx, sx') < \epsilon'/2$ for all $s \in S \cup \{1\}$.

Using Lemma~\ref{lem:Continuous}, pick $\delta > 0$ such that $d(y, y') < \delta \implies d(\phi^{-1}(y), \phi^{-1}(y')) < \epsilon$. We claim that $\delta$ is an expansivity constant. Suppose that it is not. Then we can find $y, y' \in Y$ distinct, such that $d(gy, gy') < \delta$ for all $g \in G$. Then we have $d(\phi^{-1}(gy), \phi^{-1}(gy')) < \epsilon$ for all $g \in G$ by the choice of $\delta$.

Pick $x \in \phi^{-1}(y), x' \in \phi^{-1}(y')$ with $d(x, x') < \epsilon$. Analogously to the previous proof, we prove by induction on $r$ that $d(gx, gx') < \epsilon$ for all $g \in B_r$. 

If $d(gx, gx') < \epsilon$ then $d(sgx, sgx') < \epsilon'/2$. We need to show that actually $d(sgx, sgx') < \epsilon$. Since $d(sgy, sgy') < \delta$, we have $d(\phi^{-1}(sgy), \phi^{-1}(sgy')) < \epsilon$, so there is a $\phi$-preimage $z$ of $sgy'$ at distance less than $\epsilon$ from $sgx$. This must be precisely $sgx'$, since
\[ d(z, sgx') \leq d(z, sgx) + d(sgx, sgx') < \epsilon + \epsilon'/2 < \epsilon' \]
and $\phi^{-1}(sgy')$ is $\epsilon'$-separated.
\end{proof}

\begin{theorem}
The system $(\Grig, \Voro)$ is topologically conjugate to a proper sofic shift on the group $\Grig$.
\end{theorem}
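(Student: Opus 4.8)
The plan is to deduce the statement entirely abstractly from the corresponding facts about $(\Grig, \Vee)$, transporting them across the quotient map $\phi : \Vee \to \Voro$, $x \mapsto \{x, f(x)\}$. As recalled at the start of this section, the reversal $f$ generates a free $\Z_2$-action, so $\phi$ is exactly $2$-to-$1$ (in particular constant-to-$1$); it is a factor map by the very definition of the $\Grig$-action on $\Voro$ (it intertwines $\act_\jump$ with the induced action); and it is open, because the orbit quotient of a continuous finite group action is always open. These are precisely the three hypotheses on $\phi$ required by the transfer lemmas of this section, so the whole argument reduces to checking them once and then invoking the lemmas.

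First I would establish that $\Voro$ is a subshift. Since $\Vee$ is expansive by Lemma~\ref{lem:IsSubshift}, and $\phi$ is an open constant-to-$1$ factor map, Lemma~\ref{lem:Expansive} immediately yields that $\Voro$ is expansive. As $\Voro$ is a compact metrizable zero-dimensional space (being the quotient of such a space by a free finite group action), expansivity makes it topologically conjugate to a subshift on $\Grig$. Soficity is then essentially free: $\Voro$ is a factor of $\Vee$ via $\phi$, and $\Vee$ is sofic by Lemma~\ref{lem:IsSofic}, i.e.\ a factor of an SFT; since a factor of a factor of an SFT is again a factor of an SFT, $\Voro$ is sofic once we know it is a subshift.

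For properness, the plan is to route through the pseudo-orbit tracing property rather than argue directly. By Lemma~\ref{lem:IsNotSFT}, $\Vee$ is not an SFT, so by the classical characterization of SFTs among subshifts via POTP (recalled in Section~\ref{sec:Preliminaries}), $\Vee$ fails to have POTP. Now I would apply the contrapositive of Lemma~\ref{lem:NoPOTP} to the open constant-to-$1$ factor map $\phi : \Vee \to \Voro$: if $\Voro$ had POTP, then so would $\Vee$, a contradiction. Hence $\Voro$ does not have POTP, and since $\Voro$ is a subshift, the same characterization forces it not to be an SFT. Combining this with soficity shows $\Voro$ is a proper sofic shift.

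I do not expect a genuine obstacle here, since all the real work has been isolated into the general transfer lemmas (Lemma~\ref{lem:Expansive} and Lemma~\ref{lem:NoPOTP}) and into the established properties of $(\Grig,\Vee)$. The only point demanding care is confirming that $\phi$ truly meets the standing hypotheses of those lemmas, namely that it is open and constant-to-$1$: constancy of the fibers comes from the freeness of the involution $f$ (every fiber has size exactly $2$), and openness from the finite-group orbit-quotient observation recalled above. Once these are in place, the three conclusions -- subshift, sofic, not SFT -- follow by direct citation.
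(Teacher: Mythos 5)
Your proposal is correct and follows essentially the same route as the paper: the paper likewise verifies that the quotient map is an open exactly $2$-to-$1$ factor map, deduces expansivity of $(\Grig,\Voro)$ from Lemma~\ref{lem:Expansive}, gets soficity by composing factor maps from the SFT cover of $(\Grig,\Vee)$, and rules out the SFT property via the contrapositive of Lemma~\ref{lem:NoPOTP} together with Lemma~\ref{lem:IsNotSFT}. No gaps.
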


\begin{proof}
Since the system $(\Grig, \Voro)$ is a factor of $(\Grig, \Vee)$, and $(\Grig,\Vee)$ is sofic, $(\Grig, \Voro)$ is also SFT covered. 

We now show that $(\Grig, \Voro)$ is a subshift. The space of marked Schreier graphs for any group is a compact zero-dimensional and metrizable space, so it suffices to prove expansivity. This is the content of the previous lemma. 

Since $(\Grig, \Voro)$ is an open exactly $2$-to-$1$ factor of $(\Grig, \Vee)$, and $(\Grig,\Vee)$ is not an SFT, $(\Grig, \Voro)$ cannot be pseudo-orbit tracing by the Lemma~\ref{lem:NoPOTP}, thus it is not an SFT. In other words, $(\Grig, \Voro)$ is proper sofic.
\end{proof}

\begin{remark}
As mentioned above, lemmas~\ref{lem:NoPOTP} and~\ref{lem:Expansive} prove only one direction of an if and only if condition. By proving the other ones, we could have equivalently concentrated on the system $(\Grig,\Voro)$ in this paper, and abstractly deduced the properties of the system $(\Grig,\Vee)$.
\end{remark}


\section{The system $(\Grig, \Tree)$ is pseudo-orbit tracing}
\label{sec:TreePOTP}

The system $(\Grig, \Tree)$ is also not SFT, for the trivial reason that it is not expansive, so not a subshift in the first place. As we explained in Section~\ref{sec:Preliminaries}, among subshifts SFTs are characterized by the pseudo-orbit tracing property. Perhaps surprisingly (in this light), for very general reasons, $(\Grig, \Tree)$ does have the pseudo-orbit tracing property.

\begin{lemma}
Let $X$ be a compact metrizable zero-dimensional space, let $G$ be a finitely-generated group acting on $X$ equicontinuously. Then $(G, X)$ has the pseudo-orbit tracing property.
\end{lemma}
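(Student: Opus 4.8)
The plan is to exploit equicontinuity to reduce the system, at any finite resolution, to a finite quotient $G$-system in which pseudo-orbits are shadowed for trivial reasons. Throughout, fix a symmetric generating set $S$ for $G$ containing the identity, and let $\epsilon > 0$ be given. Since $X$ is compact, metrizable and zero-dimensional, it admits a finite clopen partition $\mathcal{P}$ all of whose pieces have diameter $< \epsilon$; fix such a $\mathcal{P}$. The key claim is that equicontinuity upgrades $\mathcal{P}$ to a \emph{$G$-invariant} finite clopen partition $\mathcal{Q}$ refining $\mathcal{P}$ (so still with pieces of diameter $< \epsilon$), where $G$-invariant means every $g \in G$ permutes the pieces of $\mathcal{Q}$.

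To build $\mathcal{Q}$, I would define $x \sim y$ iff $gx$ and $gy$ lie in the same piece of $\mathcal{P}$ for every $g \in G$. This is an equivalence relation, and it is $G$-invariant because the condition defining $hx \sim hy$ ranges over the elements $gh$, which exhaust $G$; thus each $h$ permutes the $\sim$-classes. Taking $g = 1$ shows each class lies inside a single piece of $\mathcal{P}$, so classes have diameter $< \epsilon$. The step I expect to be the main obstacle -- and the only place equicontinuity is genuinely used -- is showing the classes are \emph{open}. Let $\lambda > 0$ be a Lebesgue number for $\mathcal{P}$, so that $d(u,v) < \lambda$ forces $u,v$ into a common piece. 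By equicontinuity choose $\delta_0 > 0$ with $d(x,y) < \delta_0 \Rightarrow d(gx, gy) < \lambda$ for \emph{all} $g \in G$ simultaneously; then $d(x,y) < \delta_0$ already implies $x \sim y$, so each class contains a $\delta_0$-ball about every one of its points and is therefore open. By compactness of $X$ there are only finitely many classes, and they form the desired finite $G$-invariant clopen partition $\mathcal{Q} = \{Q_1, \ldots, Q_k\}$.

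Finally I would verify the tracing property using $\mathcal{Q}$. The action descends to a homomorphism from $G$ into the symmetric group on $\{Q_1,\ldots,Q_k\}$; write $\bar g$ for the permutation induced by $g$, so that $g Q_i = Q_{\bar g(i)}$. Let $\delta$ be a Lebesgue number for $\mathcal{Q}$ (which exists since its finitely many pieces are closed and pairwise disjoint), and take $F = S$. Given a $(\delta, S)$-pseudo-orbit $z : G \to X$, the inequality $d(s z(g), z(sg)) < \delta$ forces $s z(g)$ and $z(sg)$ into the same piece of $\mathcal{Q}$. Writing $\bar z(g)$ for the index of the piece containing $z(g)$ and using $s Q_{\bar z(g)} = Q_{\bar s(\bar z(g))}$, this says exactly that $\bar z(sg) = \bar s(\bar z(g))$ for all $s \in S$ and $g \in G$. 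Iterating this identity along a word for $g$ in the generators, and using that $g \mapsto \bar g$ is a homomorphism, yields $\bar z(g) = \bar g(\bar z(1))$ for every $g \in G$.

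To conclude, I would choose any point $x$ in the piece $Q_{\bar z(1)}$. Then for every $g \in G$ we have $g x \in g Q_{\bar z(1)} = Q_{\bar g(\bar z(1))} = Q_{\bar z(g)}$, which is the same piece as $z(g)$. Since every piece of $\mathcal{Q}$ has diameter $< \epsilon$, this gives $d(gx, z(g)) < \epsilon$ for all $g$, so $x$ $\epsilon$-traces $z$. As $\epsilon > 0$ was arbitrary, $(G, X)$ has the pseudo-orbit tracing property.
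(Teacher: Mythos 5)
Your proof is correct and follows essentially the same route as the paper: both arguments use equicontinuity to produce a finite $G$-invariant clopen partition into pieces of diameter less than $\epsilon$ (your itinerary equivalence relation is exactly the paper's finite factor built from symbolic coordinates), and then observe that a $(\delta,S)$-pseudo-orbit is forced to follow the induced finite orbit, so any point of the starting piece traces it. The only cosmetic difference is that the paper takes the tracing point to be $z(1_G)$ itself and runs an explicit induction over balls $B_r$, whereas you phrase the same iteration through the permutation homomorphism $g \mapsto \bar g$.
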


\begin{proof}
We may assume $X \subset \{0,1\}^\N$ is a closed set. Let $m \in \N$. In terms of coordinates, equicontinuity means that there exists $n \in \N$ such that for $x \in X$, the word $x|_{[0, n-1]}$ determines the $G$-tuple $T(x) = ((g \act x)|_{[0, m-1]})_{g \in G}$ uniquely. A direct calculation shows that these $G$-tuples form a $G$-subshift $Y$ over alphabet $\{0, 1\}^m$, and that the map from $x$ to $T(x)$ is a surjective factor map from $X$ to $Y$ as $G$-systems. The subshift $Y$ is finite, as we showed it has at most $\{0,1\}^n$ points.

A finite factor for $(G, X)$ is equivalent to a finite partition of $X$ into clopen sets $C_1, \ldots, C_k$, such that the action of $G$ respects the partition in the sense that for all $i$, there exists $j$ such that $g \act C_i = C_j$. Furthermore, from the construction is it clear that the diameters of the $C_i$ can be made arbitrarily small -- the diameters of the $C_i$ are at most those of the cylinders $[x|_{[0,m-1]}]$.

Let $\epsilon > 0$ be arbitrary and construct an invariant clopen partition $(C_i)_{i=1}^k$ with each $C_i$ having diameter at most $\epsilon$. Since the sets $C_i$ are compact, we can find $\delta > 0$ with $\delta < \epsilon$ such that if $d(x, y) < \delta$ and $x \in C_i$, then $y \in C_i$ as well. For $x \in X$, write $C(x) = i$ for the unique $i$ such that $x \in C_i$.

Let now $F$ be any finite generating set for $G$ and let $z : G \to X$ be a $(\delta, F)$-pseudo-orbit. Let $x = z(1_G)$. We prove by induction on $r$ that for $g \in B_r$ (the ball of radius $r$ in $G$ with generators $F$) we have $C(z(g)) = C(gx)$, from which we then conclude $d(z(g), gx) < \epsilon$ for all $g \in G$, proving the pseudo-orbit tracing property.

For this, suppose the claim is true for $g \in B_r$ and consider any $s \in F$. Then by the assumption that the partition $(C_i)_i$ is respected by $G$, we have that $C(z(g)) = C(gx)$ implies $C(sz(g)) = C(sgx)$. By the assumption that $z$ is a pseudo-orbit, we have $d(sz(g), z(sg)) < \delta$, so by the choice of $\delta$ also $C(z(sg)) = C(sz(g)) = C(sgx)$, which concludes the proof since $sg$ gives all elements of $B_{r+1}$.
\end{proof}

\begin{proposition}
The system $(\Grig, \Tree)$ has the pseudo-orbit tracing property.
\end{proposition}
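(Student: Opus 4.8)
The plan is to obtain the proposition as an immediate instance of the preceding (equicontinuity) lemma, so the only real work is to verify that lemma's hypotheses for the system $(\act_\tree, \Grig, \Tree)$. The group $\Grig = \langle a,b,c,d\rangle$ is finitely generated, and $\Tree = \{0,1\}^\omega$ is a Cantor set, hence compact, metrizable and zero-dimensional. Thus everything reduces to checking that the tree action is equicontinuous.

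First I would equip $\Tree$ with the standard ultrametric $d(x,y) = 2^{-n}$, where $n$ is the least index at which $x$ and $y$ differ (and $d(x,x) = 0$). The key observation is that each $g \in \Grig$ acts as an automorphism of the rooted binary tree $T_2$, and such automorphisms fix the root and permute the vertices at each level among themselves. Consequently the boundary action preserves the length of the longest common prefix of any two sequences: if $x$ and $y$ pass through the same level-$n$ vertex, then so do $g \act_\tree x$ and $g \act_\tree y$. In metric terms this reads $d(g \act_\tree x, g \act_\tree y) = d(x,y)$ for every $g$, i.e.\ the action is by isometries.

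I would make this concrete directly from the defining formulas, which is the one step requiring a little care (though it is entirely routine). The generator $a$ alters only coordinate $0$, while each $g \in \{b,c,d\}$ leaves the initial block $1^n 0$ intact and can only change the coordinate at position $n+1$. In either case, if $x$ and $y$ first differ at index $k$, a short case analysis, according to whether the common prefix is a power of $1$ or already contains a $0$ and where that $0$ sits relative to $k$, shows that $g \act_\tree x$ and $g \act_\tree y$ again first differ at index $k$. Since the isometry property is preserved under composition, the whole group $\Grig$ acts isometrically.

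Finally, an isometric action is trivially equicontinuous: given $\epsilon > 0$ one may take $\delta = \epsilon$, since $d(x,y) < \delta$ forces $d(g \act_\tree x, g \act_\tree y) = d(x,y) < \epsilon$ uniformly in $g$. With equicontinuity established, the preceding lemma applies verbatim and yields the pseudo-orbit tracing property for $(\Grig, \Tree)$. The main (and only mild) obstacle is the bookkeeping in the case analysis confirming that the generators preserve common-prefix length; conceptually this is nothing more than the statement that tree automorphisms act level-preservingly, which is exactly the remark in the introduction that the action is by isometries for a suitable ultrametric.
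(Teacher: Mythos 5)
Your proposal is correct and takes essentially the same route as the paper: the paper's proof is the one-line observation that $(\Grig,\Tree)$ satisfies the hypotheses of the preceding equicontinuity lemma, with equicontinuity following from the fact (noted in the introduction) that the action is by isometries for the natural ultrametric. You simply spell out the isometry verification that the paper leaves implicit.
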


\begin{proof}
This system satisfies the assumptions of the previous lemma.
\end{proof}

\bibliographystyle{plain}
\bibliography{bib}{}

\begin{thebibliography}{10}

\bibitem{AuBaSa17}
Nathalie Aubrun, Sebasti{\'a}n Barbieri, and Mathieu Sablik.
\newblock A notion of effectiveness for subshifts on finitely generated groups.
\newblock {\em Theoretical Computer Science}, 661:35--55, 2017.

\bibitem{AuSa13}
Nathalie Aubrun and Mathieu Sablik.
\newblock Simulation of effective subshifts by two-dimensional subshifts of
  finite type.
\newblock {\em Acta applicandae mathematicae}, 126:35--63, 2013.

\bibitem{Ba17}
Sebasti{\'a}n Barbieri.
\newblock {\em Shift spaces on groups: computability and dynamics}.
\newblock PhD thesis, Universit{\'e} de Lyon, 2017.

\bibitem{Ba19}
Sebasti{\'a}n Barbieri.
\newblock A geometric simulation theorem on direct products of finitely
  generated groups.
\newblock {\em Discrete Analysis}, page 8820, 2019.

\bibitem{BaSaSa21}
Sebasti{\'a}n Barbieri, Mathieu Sablik, and Ville Salo.
\newblock Groups with self-simulable zero-dimensional dynamics.
\newblock {\em arXiv preprint arXiv:2104.05141}, 2021.

\bibitem{BaGrSu03}
Laurent Bartholdi, Rostislav Grigorchuk, and Zoran {\v{S}}unik.
\newblock Branch groups.
\newblock In {\em Handbook of algebra}, volume~3, pages 989--1112. Elsevier,
  2003.

\bibitem{BaSa24}
Laurent Bartholdi and Ville Salo.
\newblock Shifts on the lamplighter group.
\newblock {\em arXiv preprint arXiv:2402.14508}, 2024.

\bibitem{Be66}
Robert Berger.
\newblock The undecidability of the domino problem.
\newblock {\em Mem. Amer. Math. Soc.}, 66:72, 1966.

\bibitem{BlHa91}
F.~Blanchard and G.~Hansel.
\newblock Sofic constant-to-one extensions of subshifts of finite type.
\newblock {\em Proceedings of the American Mathematical Society},
  112(1):259--265, 1991.

\bibitem{CeCo10}
Tullio Ceccherini-Silberstein and Michel Coornaert.
\newblock {\em Cellular Automata and Groups}.
\newblock Springer Monographs in Mathematics. Springer Berlin Heidelberg, 2010.

\bibitem{CeCo23}
Tullio Ceccherini-Silberstein and Michel Coornaert.
\newblock {\em Exercises in cellular automata and groups}.
\newblock Springer Nature, 2023.

\bibitem{ChSc59}
Noam Chomsky and Marcel Sch{\"u}tzenberger.
\newblock The algebraic theory of context-free languages.
\newblock In {\em Studies in Logic and the Foundations of Mathematics},
  volume~26, pages 118--161. Elsevier, 1959.

\bibitem{ChLe18}
Nhan-Phu Chung and Keonhee Lee.
\newblock Topological stability and pseudo-orbit tracing property of group
  actions.
\newblock {\em Proceedings of the American Mathematical Society},
  146(3):1047--1057, 2018.

\bibitem{CiElFe18}
Laura Ciobanu, Murray Elder, and Michal Ferov.
\newblock Applications of {L} systems to group theory.
\newblock {\em Internat. J. Algebra Comput.}, 28(2):309--329, 2018.

\bibitem{Co17}
David~Bruce Cohen.
\newblock The large scale geometry of strongly aperiodic subshifts of finite
  type.
\newblock {\em Advances in Mathematics}, 308:599--626, 2017.

\bibitem{CoPa06}
Michel Coornaert and Athanase Papadopoulos.
\newblock {\em Symbolic dynamics and hyperbolic groups}.
\newblock Springer, 2006.

\bibitem{Ha00}
Pierre de~la Harpe.
\newblock {\em Topics in geometric group theory}.
\newblock University of Chicago Press, 2000.

\bibitem{DrSc07}
Alexander Dranishnikov and Viktor Schroeder.
\newblock Aperiodic colorings and tilings of {C}oxeter groups, 2007.

\bibitem{DuRoSh10}
Bruno Durand, Andrei Romashchenko, and Alexander Shen.
\newblock Effective closed subshifts in 1d can be implemented in 2d.
\newblock {\em Fields of logic and computation}, 6300:208--226, 2010.

\bibitem{Gr80}
Rostislav Grigorchuk.
\newblock Burnside problem on periodic groups.
\newblock {\em Funktsional'nyi Analiz i ego Prilozheniya}, 14(1):53--54, 1980.

\bibitem{Gr84}
Rostislav Grigorchuk.
\newblock Degrees of growth of finitely generated groups, and the theory of
  invariant means.
\newblock {\em Izvestiya Rossiiskoi Akademii Nauk. Seriya Matematicheskaya},
  48(5):939--985, 1984.

\bibitem{Gr99}
Rostislav Grigorchuk.
\newblock {\em On the system of defining relations and the Schur multiplier of
  periodic groups generated by finite automata}, page 290–317.
\newblock London Mathematical Society Lecture Note Series. Cambridge University
  Press, 1999.

\bibitem{Gr00}
Rostislav Grigorchuk.
\newblock Just infinite branch groups.
\newblock In {\em New horizons in pro-p groups}, pages 121--179. Springer,
  2000.

\bibitem{Gr11}
Rostislav Grigorchuk.
\newblock Some topics in the dynamics of group actions on rooted trees.
\newblock {\em Proceedings of the Steklov Institute of Mathematics},
  273:64--175, 2011.

\bibitem{GrLeNa17}
Rostislav Grigorchuk, Daniel Lenz, and Tatiana Nagnibeda.
\newblock Schreier graphs of {G}rigorchuk's group and a subshift associated to
  a nonprimitive substitution.
\newblock {\em Groups, graphs and random walks}, 436:250--299, 2017.

\bibitem{GrNeSu00}
Rostislav Grigorchuk, Volodymyr Nekrashevych, and Vitaly Sushchansky.
\newblock Automata, dynamical systems, and groups.
\newblock {\em Trudy Matematicheskogo Instituta Imeni VA Steklova},
  231:134--214, 2000.

\bibitem{He69}
Gustav Hedlund.
\newblock Endomorphisms and automorphisms of the shift dynamical system.
\newblock {\em Mathematical systems theory}, 3(4):320--375, 1969.

\bibitem{Ho}
Michael Hochman.
\newblock Is the multidimensional odd shift sofic?
\newblock
  \url{https://math.huji.ac.il/%7Emhochman/problems/multidimensional-odd-shift.pdf}.
\newblock Accessed Feb 22, 2024.

\bibitem{Ho09}
Michael Hochman.
\newblock On the dynamics and recursive properties of multidimensional symbolic
  systems.
\newblock {\em Inventiones mathematicae}, 176(1):131--167, 2009.

\bibitem{JaKe69}
Konrad Jacobs and Michael Keane.
\newblock $0$-$1$-sequences of {T}oeplitz type.
\newblock {\em Zeitschrift f{\"u}r Wahrscheinlichkeitstheorie und Verwandte
  Gebiete}, 13:123--131, 1969.

\bibitem{Je15}
Emmanuel Jeandel.
\newblock Aperiodic subshifts of finite type on groups.
\newblock {\em arXiv preprint arXiv:1501.06831}, 2015.

\bibitem{Ku03}
Petr K\r{u}rka.
\newblock {\em Topological and symbolic dynamics}, volume~11 of {\em Cours
  Sp\'{e}cialis\'{e}s [Specialized Courses]}.
\newblock Soci\'{e}t\'{e} Math\'{e}matique de France, Paris, 2003.

\bibitem{LiMa21}
Douglas Lind and Brian Marcus.
\newblock {\em An introduction to symbolic dynamics and coding}.
\newblock Cambridge university press, 2021.

\bibitem{Ly85}
Igor Lysenok.
\newblock A system of defining relations for a grigorchuk group.
\newblock {\em Matematicheskie Zametki}, 38(4):503--516, 1985.

\bibitem{Bo15}
Nicol{\'a}s Matte~Bon.
\newblock Topological full groups of minimal subshifts with subgroups of
  intermediate growth.
\newblock {\em Journal of Modern Dynamics}, 9(1):67--80, 2015.

\bibitem{Me19}
Tom Meyerovitch.
\newblock Pseudo-orbit tracing and algebraic actions of countable amenable
  groups.
\newblock {\em Ergodic Theory and Dynamical Systems}, 39(9):2570--2591, 2019.

\bibitem{Mo89}
Shahar Mozes.
\newblock Tilings, substitution systems and dynamical systems generated by
  them.
\newblock {\em Journal d'analyse math{\'e}matique}, 53(1):139--186, 1989.

\bibitem{Sa18}
Ville Salo.
\newblock When are group shifts of finite type?
\newblock {\em arXiv preprint arXiv:1807.01951}, 2018.

\bibitem{Sc90}
Klaus Schmidt.
\newblock {\em Algebraic ideas in ergodic theory}, volume~76 of {\em CBMS
  Regional Conference Series in Mathematics}.
\newblock Conference Board of the Mathematical Sciences, Washington, DC; by the
  American Mathematical Society, Providence, RI, 1990.

\bibitem{Ve11}
Anatoly Vershik.
\newblock Totally nonfree actions and infinite symmetric group.
\newblock {\em arXiv preprint arXiv:1109.3413}, 2011.

\bibitem{Vo10}
Yaroslav Vorobets.
\newblock On a substitution subshift related to the {G}rigorchuk group.
\newblock {\em Proceedings of the Steklov Institute of Mathematics},
  271:306--321, 2010.

\bibitem{Vo12}
Yaroslav Vorobets.
\newblock Notes on the {S}chreier graphs of the {G}rigorchuk group.
\newblock {\em Dynamical systems and group actions}, 567:221--248, 2012.

\bibitem{Vo20}
Yaroslav Vorobets.
\newblock On the topological full group containing the {G}rigorchuk group.
\newblock {\em arXiv preprint arXiv:2007.06789}, 2020.

\bibitem{Wa77}
Peter Walters.
\newblock On the pseudo orbit tracing property and its relationship to
  stability.
\newblock {\em The Structure of Attractors in Dynamical Systems: Proceedings,
  North Dakota State University}, pages 231--244, 1977.

\end{thebibliography}

\end{document}